\documentclass[3p,sort,times]{elsarticle}
\usepackage{graphicx}
\usepackage{amssymb}
\usepackage{yfonts}
\usepackage{amsmath}
\usepackage[ruled,vlined]{algorithm2e}
\DontPrintSemicolon
\SetAlFnt{\scriptsize}
\usepackage{mathrsfs}
\usepackage{epstopdf}
\usepackage{stmaryrd}
\usepackage[normalem]{ulem}
\usepackage{subfig}
\usepackage{amsthm}
\usepackage{lmodern}
\usepackage{enumitem}
\usepackage{amsfonts}
\usepackage{mathtools}
\usepackage{booktabs}
\usepackage{xcolor}
\usepackage{stackrel}
\usepackage{comment}
\theoremstyle{plain}
\newtheorem{thm}{Theorem}
\newtheorem{Lem}{Lemma}
\newtheorem{cor}{Corollary}

\theoremstyle{remark}
\newtheorem{rem}{Remark}

\allowdisplaybreaks

\numberwithin{equation}{section}


\definecolor{Green}{HTML}{008000}

\newcommand{\pderivative}[2]{\frac{\partial #1}{\partial #2}}

\newcommand{\dmat}{\mat{D}}

\newcommand{\mmat}{\mat{{M}}}
\newcommand{\qmat}{\mat{{Q}}}

\newcommand{\average}[1]{\left\{\!\left\{ #1\right\}\!\right\}}

\newcommand{\half}{\frac{1}{2}}
\newcommand{\mat}[1]{\mathbf{#1}}
\renewcommand{\vec}[1]{\underline{#1}}

\newcommand{\Bmat}{\mat{{B}}}

\newcommand{\op}{\mathcal{L}}
\newcommand{\opx}{\mathcal{L}_x}
\newcommand{\opy}{\mathcal{L}_y}
\newcommand{\opz}{\mathcal{L}_z}

\newcommand{\textred}[1]{\textcolor{black}{#1}}
\newcommand{\textblue}[1]{\textcolor{black}{#1}}

\begin{document}

\begin{frontmatter}
\title{Split Form Nodal Discontinuous Galerkin Schemes with Summation-By-Parts Property for the Compressible Euler Equations}
\author[unikoelnMath]{Gregor J.~Gassner\corref{cor1}}
\cortext[cor1]{Corresponding author.}
\ead{ggassner@math.uni-koeln.de}
\author[unikoelnMath]{Andrew R.~Winters}
\author[fsu]{David Kopriva}

\address[unikoelnMath]{Mathematisches Institut, Universit\"{a}t zu K\"{o}ln, Weyertal 86-90, 50931 K\"{o}ln, Germany}
\address[fsu]{Department of Mathematics, The Florida State University, Tallahassee FL 32312, USA}

\begin{abstract}
Fisher and Carpenter (\textit{High-order entropy stable finite difference schemes for non-linear conservation laws: Finite domains, Journal of Computational Physics, 252:518--557, 2013}) found a remarkable equivalence of general diagonal norm high-order summation-by-parts operators to a subcell based high-order finite volume formulation. This equivalence enables the construction of provably entropy stable schemes \textblue{by a specific choice of the subcell finite volume flux. We show that besides the construction of entropy stable high order schemes, a careful choice of subcell finite volume fluxes generates split formulations of quadratic or cubic terms. Thus, by changing the subcell finite volume flux to a specific choice, we are able to generate, in a systematic way, all common split forms of the compressible Euler advection terms, such as the Ducros splitting and the Kennedy and Gruber splitting. Although these split forms are not entropy stable, we present a systematic way to prove which of those split forms are at least kinetic energy preserving. With this, we show we construct a unified high-order split form DG framework. We investigate with three dimensional numerical simulations of the inviscid Taylor-Green vortex and show that the new split forms enhance the robustness of high order simulations in comparison to the standard scheme when solving turbulent vortex dominated flows. In fact, we show that for certain test cases, the novel split form discontinuous Galerkin schemes are more robust than the discontinuous Galerkin scheme with over-integration.}
\end{abstract}
\begin{keyword} 
Ducros splitting  \sep Kennedy and Gruber splitting \sep kinetic energy \sep discontinuous Galerkin spectral element method \sep 3D compressible Euler equations \sep Taylor-Green vortex \sep split form
\end{keyword}

\end{frontmatter}

\section{Introduction}\label{sec:intro}

This paper \textblue{presents robust} nodal discontinuous Galerkin (DG) approximations for the advective terms of the three dimensional compressible Navier-Stokes equations, namely the compressible Euler equations. In the discontinuous Galerkin community, stabilising an approximation is frequently done by \textblue{polynomial de-aliasing through over-integration \cite{Kirby2003,tcfd2012,mengaldo2015}}. The typical DG implementation is under-integrated. It is well-known that when the numerical quadrature in the DG scheme is constructed so that flux functions that depend linearly on the solution (e.g. linear advection equation) are integrated exactly, the approximation retains the formal order of accuracy \cite{CS1,CS2,CS4,CS5,CS6}. The exact number of quadrature points depends on the polynomial ansatz space, on the element type and, of course, on the specific quadrature rule used. A similar concept is used for nodal DG schemes where the ansatz uses interpolation and (multi-)variate Lagrange-type basis functions. 
In many cases, DG discretisations for linear fluxes are directly applied to problems with non-linear flux functions by simply exchanging the linear flux function with the corresponding non-linear flux. The reasoning is clear, as the minimum number of quadrature (or interpolation) nodes necessary to obtain the expected order of convergence gives an implementation with the lowest number of arithmetic operations, and thus increased efficiency at first sight. 

\subsection{Stabilisation strategies for discontinuous Galerkin based discretisations}\label{sec:dg_dealiasing}

It turns out that the strategy of minimal effort has a drastic impact when the numerical solution is under-resolved. In such cases, e.g. under-resolved turbulence or shocks, aliasing errors due to variational errors corrupt the approximate solution and may even drive a non-linear instability. Whereas such a non-linear instability is often masked by excessive artificial viscosity when using low order approximations, high-order discretisations with their lower inherent numerical dissipation are prone to such instabilities. In fact, without additional counter-measures such high-order discretisations are unstable and crash \cite{Kirby2003}. It is of course arguable why one should even \textblue{consider} under-resolution at all, as the accuracy of results with low resolution is at least questionable. However, recent investigations show that it is possible to achieve quite accurate results with (very) high-order DG schemes, even with under-resolution as long as proper de-aliasing mechanisms are augmented \cite{tcfd2012}.

A very popular strategy is the use of polynomial de-aliasing, as mentioned above. Motivated by spectral methods, the authors of \cite{Kirby2003} proposed to de-alias by increasing the number of quadrature nodes according to the non-linearity of the flux function so that the variational terms are evaluated exactly. As a prime example, the non-linear Burger's equation has a quadratic flux function and hence roughly a factor of $1.5$ times the number of interpolation nodes is needed in each spatial direction to integrate the non-linear flux exactly. It is also remarkable that for DG discretisations with exact evaluations of the integral, Shu and Jiang \cite{cell_entropy_dg} proved a cell entropy inequality and, with this, non-linear $L_2$ stability. It is very important to note however, that Shu and Jiang consider \textit{scalar} non-linear conservation laws and consequently non-linear stability of DG discretisations with exact integration is only valid for \textit{scalar} conservation laws. Their stability estimate does not carry over to systems of non-linear conservation laws such as the compressible Euler equations, independent of the number of quadrature nodes used to evaluate the inner products. 

In fact, recent results by \cite{rodrigo_iLES} show that even with up to four times the number of quadrature nodes in each spatial direction, DG discretisations with either local Lax-Friedrichs or Roe's flux function crash for under-resolved turbulence simulations. The ``up to four times the quadrature nodes" statements necessitates an additional remark: For the compressible Euler equations, the polynomial ansatz is typically done for the conserved quantities such as mass, momentum and energy. But, the flux functions are rational polynomials of the conserved quantities. This is problematic because the available quadrature rules are only exact for polynomial integrands. Thus, it is formally impossible to implement an exact integration based on standard quadrature rules for the compressible Euler equations. 

To formally prove non-linear stability of DG discretisations for systems of conservation laws, it is necessary to reformulate the equations in terms of entropy variables. Here, the polynomial ansatz (and the test-function) space approximates the entropy variables and not the conserved variables. This is important, as it is now formally possible to show that the DG discretisation satisfies a cell entropy inequality for systems \cite{HM14_764}. However, it is important to note, that again, the stability proof relies on exact evaluation of the variational terms, similar to the proof of Shu and Jiang \cite{cell_entropy_dg}. As discussed above, in the case of the compressible Euler equations it is impossible (or impractical) to find a fixed number of quadrature nodes so that the variational errors due to the rational non-linearity of the flux functions with respect to the entropy variables are zero. At least an adaptive numerical quadrature approach would be necessary, which of course makes this strategy quite cumbersome with respect to implementation and efficiency. 

Up to now, the only known stability proof without relying on exact evaluation of inner products for the compressible Euler equations is presented in Carpenter et al. \cite{carpenter_esdg}. The proof is possible because it uses a very specific form of the DG discretisation. \textblue{We note that in addition to the correct treatment of the volume integral terms, the numerical fluxes at the interfaces as well as the boundary conditions need to be treated carefully \cite{parsani2015entropy,parsaniCITEKEY,svard2014entropy} to obtain entropy stability.}

By choosing a nodal DG ansatz with Gauss-Lobatto (GL) nodes used for both interpolation and numerical integration, the so-called discontinuous Galerkin spectral element method with collocation (DGSEM) results, e.g. \cite{koprivabook}. In the present work, we always use interpolation and integration based on the GL nodes. This variant of the DG methodology is special because it possesses the summation-by-parts (SBP) property. The discrete mass matrix $\mmat$ and the discrete derivative matrix $\dmat$ of the DGSEM satisfy all formal definitions of a SBP operator \cite{Strand199447}
\begin{equation}
\label{eq:sbp}
\qmat:=\mmat\,\dmat\quad\text{ with }\quad\qmat+\qmat^T=\Bmat,
\end{equation}
where $\Bmat=\textrm{diag}([-1,0,\ldots,0,1])$ is the boundary evaluation operator \textblue{and the rows of $\qmat$ are undivided differences}, e.g. \cite{gassner_skew_burgers,carpenter_esdg}. Furthermore, the mass matrix is diagonal and is used to define a discrete $L_2$-norm, e.g. to compute errors. To be more specific, the DGSEM operators form a so-called diagonal norm SBP operator. It was possible in \cite{carpenter_esdg,fisher2013} to construct a specific form of the DGSEM that satisfies a cell entropy type inequality for all conservation laws while retaining the nodal nature of the discretisation, i.e. without relying on exact evaluation of the inner products. The only necessary ingredient to achieve a cell entropy type inequality is a two-point numerical flux function that gives exact entropy conservation in a standard finite volume type discretisation \cite{carpenter_esdg}. A remarkable advantage of these derivations are that they do not rely on any DG specifics, but only on the SBP property. Thus, the non-linear stability results directly carry over to all discretisations constructed with SBP operators, where the mass matrix $\mmat$ (often called norm matrix) is diagonal as in the case of DGSEM. 
 
\subsection{Alternative stabilisation strategies for the compressible Euler equations}\label{sec:fd_dealiasing}

We leave the DG community momentarily and investigate de-aliasing strategies used in other high-order communities such as finite differences, e.g. \cite{ducros2000,Morinishi2010276,kennedy2008,FDaliasing,larsson2007} and spectral methods, e.g. \cite{blaisdell1996effect,Zang199127}. A very prominent example for de-aliasing is the use of alternative formulations of the non-linear advection terms, e.g. so-called split formulations. Due to the non-linear character of the advective terms of the Euler equations, there are many ways to re-write the equations.  A good overview of the different split forms can be found in \cite{pirozzoli2011}. 

We consider here split forms of the three dimensional compressible Euler equations. In a general operator notation, we can write the Euler equations as 
\begin{equation}
\label{eq:operator_form}
U_t + \opx(U)+ \opy(U)+ \opz(U) = 0,
\end{equation}
with the conservative variables
\begin{equation}
U=\begin{pmatrix}
   u_1\\
   u_2\\
   u_3\\
   u_4\\
   u_5\\
  \end{pmatrix}:=\begin{pmatrix}
   \rho\\
   \rho\,u\\
   \rho\,v\\
   \rho\,w\\
   \rho\,e\\
  \end{pmatrix},
\end{equation}
where $\rho\,e=\rho\,\theta+\frac{1}{2}\rho\,(u^2+v^2+w^2)$ and  $\rho$, $u,v,w$, $p$, $\theta$, $e$ are density, velocity, pressure, specific inner energy and specific total energy respectively, and $\op_{x,y,z}(U)$ are the non-linear spatial differential operators in the respective Cartesian direction $x,y,z$ acting on $U$. We close the system by considering a perfect gas equation which relates the internal energy and pressure as $p = (\gamma -1)\,\rho\theta$, where $\gamma$ denotes the adiabatic coefficient.

If we consider, for instance, the standard divergence form of the Euler equations, the non-linear operators are
\begin{equation}
\label{eq:op_divergence}
\resizebox{0.925\textwidth}{!}{$
\begin{aligned}
\opx^{div}(U):= F(U)_x=  \begin{bmatrix}
\rho\,u\\
\rho\,u^2 +p\\
\rho\,u\,v\\
\rho\,u\,w\\
(\rho\,e+p)\,u
\end{bmatrix}_x,\quad
\opy^{div}(U):= G(U)_y= \begin{bmatrix}
\rho\,v\\
\rho\,u\,v \\
\rho\,v^2+p\\
\rho\,v\,w\\
(\rho\,e+p)\,v
\end{bmatrix}_y,\quad
\opz^{div}(U):=  H(U)_z=\begin{bmatrix}
\rho\,w\\
\rho\,u\,w \\
\rho\,v\,w\\
\rho\,w^2+p\\
(\rho\,e+p)\,w
\end{bmatrix}_z.
\end{aligned}$}
\end{equation}

The divergence form \eqref{eq:op_divergence} is typically used to construct a DG discretisation, as it directly gives discrete conservation of the resulting approximation. However, depending on how one interprets the non-linearities of the Euler flux (quadratic, cubic or rational) there are several ways to re-write the equations in an equivalent split form. 

We use different split form approximations of derivatives of products to determine a particular splitting. The derivative of a product of two quantities is approximated by
\begin{equation}\label{eq:two_split}
\begin{split}
(a\,b)_x &= \frac{1}{2}\,(a\,b)_x + \frac{1}{2}\left(a_x\,b + a\,b_x\right),
\end{split}
\end{equation}
which originates from the general split form 
\begin{equation}
\label{eq:splitformquadratic}
\begin{split}
(a\,b)_x &= \alpha\,(a\,b)_x + (1-\alpha)\,\left(a_x\,b + a\,b_x\right),\quad \alpha\in\mathbb{R},
\end{split}
\end{equation}
when choosing $\alpha=1/2$. We note that with the quadratic splitting it is possible to prove stability of linear variable coefficient problems \cite{kopriva2014}.

For the derivative of cubic terms Kennedy and Gruber \cite{kennedy2008} proposed the following split form 
\begin{equation}
\resizebox{\textwidth}{!}{$
\begin{aligned}
\pderivative{}{x}(abc) = \alpha\pderivative{}{x}(abc) &+ \beta\left[a\pderivative{}{x}(bc) + bc\pderivative{}{x}(a)\right]+ \kappa\left[b\pderivative{}{x}(ac) + ac\pderivative{}{x}(b)\right]+ \delta\left[c\pderivative{}{x}(ab) + ab\pderivative{}{x}(c)\right] \\&+ \epsilon\left[bc\pderivative{}{x}(a) + ac\pderivative{}{x}(b) + ab\pderivative{}{x}(c)\right],
\end{aligned}$}
\end{equation}
where $\epsilon = 1-\alpha-\beta-\kappa-\delta$ and $\alpha,\beta,\kappa,\delta\in\mathbb{R}$. From all the possible combinations, we choose the case $\alpha=\beta=\kappa=\delta=\frac{1}{4}$ and $\epsilon = 0$, which gives
\begin{equation}\label{eq:three_split}
\resizebox{\textwidth}{!}{$
\begin{aligned}
\begin{split}
(a\,b\,c)_x &= \frac{1}{4}\,(a\,b\,c)_x + \frac{1}{4}\left(a_x\,(b\,c) + b_x\,(a\,c)+c_x\,(a\,b)\right)+ \frac{1}{4}\left(a\,(b\,c)_x + b\,(a\,c)_x+c\,(a\,b)_x\right).
\end{split}
\end{aligned}$}
\end{equation}

With the two split forms \eqref{eq:two_split} and \eqref{eq:three_split} it is now possible to compose new equivalent forms of the non-linear operator $\op(U)$ to enhance the stability of discretisations. 
For instance, Morinishi \cite{Morinishi2010276} introduced a skew-symmetric form for the momentum equations, which was rewritten in Gassner \cite{gassner_kepdg} assuming time continuity as  
\begin{equation}\label{eq:op_morinishi}
\resizebox{0.9\hsize}{!}{$
\opx^{MO}(U) := \begin{bmatrix}
(\rho\,u)_x\\[0.1cm]
\half\left((\rho u^2)_x+\rho u\,(u)_x + u\,(\rho u)_x\right) + p_x\\[0.1cm]
\half\left((\rho u v)_x+\rho u\,(v)_x + v\,(\rho u)_x\right)\\[0.1cm]
\half\left((\rho u w)_x+\rho u\,(w)_x + w\,(\rho u)_x\right)\\[0.1cm]
\left((\rho\theta+p)\,u\right)_x + \half\left(\rho\,u^2\,(u)_x + u\,(\rho\,u^2)_x + \rho\,u\,v\,(v)_x + v\,(\rho\,u\,v)_x + \rho\,u\,w\,(w)_x + w\,(\rho\,u\,w)_x \right)
\end{bmatrix},$}
\end{equation} 
with the operators in the $y$ and $z$ direction defined similarly. It is important to note that the form of Morinishi does not use the split form of the quadratic terms ($\alpha=1/2$), but the pure advective form ($\alpha=0$ in \eqref{eq:splitformquadratic}) in the energy equation. The form \eqref{eq:op_morinishi} allows the construction of a formally kinetic energy preserving discontinuous Galerkin method \cite{gassner_kepdg}, but we will show in the numerical results section that the positive stabilisation effect of this alternative form of Morinishi is the lowest in comparison to the other forms presented below. A possible reason could be that only the advective form is used in the total energy equation. 

In contrast to Morinishi's flux, Ducros et al. \cite{ducros2000} proposed the following form, where only the split form ($\alpha=1/2$) of the quadratic product  \eqref{eq:two_split} is used
\begin{equation}\label{eq:op_ducros}
\opx^{DU}(U) := \begin{bmatrix}
\half\left((\rho u)_x+\rho(u)_x + u(\rho)_x\right)\\[0.1cm]
\half\left((\rho u^2)_x+\rho u(u)_x + u(\rho u)_x\right) + p_x\\[0.1cm]
\half\left((\rho u v)_x+\rho v(u)_x + u(\rho v)_x\right)\\[0.1cm]
\half\left((\rho u w)_x+\rho w(u)_x + u(\rho w)_x\right)\\[0.1cm]
\half\left(((\rho e +p)u)_x+(\rho e + p)(u)_x + u(\rho e + p)_x\right)
\end{bmatrix}.
\end{equation} 
However, this alternative formulation does not lead to a discretisation that is formally kinetic energy preserving\textblue{, e.g. \cite{pirozzoli2011}}. 

The cubic form \eqref{eq:three_split} was first introduced and applied in \cite{kennedy2008}. The operator proposed by Kennedy and Gruber in the $x-$direction reads
\begin{equation}
\label{eq:op_KG}
\opx^{KG}(U) := \begin{bmatrix}
\half\left((\rho u)_x+\rho(u)_x + u(\rho)_x\right)\\[0.1cm]
\frac{1}{4}\left[(\rho u^2)_x+\rho (u^2)_x + 2u(\rho u)_x + u^2(\rho)_x + 2\rho u(u)_x \right]+ p_x\\[0.1cm]
\frac{1}{4}\left[(\rho u v)_x+\rho (uv)_x + u(\rho v)_x + v(\rho u)_x + uv(\rho)_x + \rho v(u)_x + \rho u(v_x)\right]\\[0.1cm]
\frac{1}{4}\left[(\rho u w)_x+\rho (uw)_x + u(\rho w)_x + w(\rho u)_x + uw(\rho)_x + \rho w(u)_x + \rho u(w_x)\right]\\[0.1cm]
\half\left((pu)_x+p(u)_x+u(p_x)\right)+\frac{1}{4}\left[(\rho e u)_x + \rho(e u)_x + e(\rho u)_x + u(\rho e)_x\right. \\
\left.\qquad\qquad\qquad\qquad\qquad\qquad+ eu(\rho)_x + \rho u (e)_x + \rho e (u)_x\right]
\end{bmatrix}.
\end{equation}
The standard quadratic form is used for the continuity equation, whereas the cubic form is used for the advection terms in the momentum equation. In the energy equation, both the quadratic form for $p\,u$ and the cubic form for $\rho\,e\,u$ are applied. Again, this form allows one to construct formally kinetic energy preserving discretisations \cite{jameson2008}.

In his overview, Pirozzoli \cite{pirozzoli2011} re-intrepreted the work of Kennedy and Gruber and used a similar but slightly different form of the flux splitting. Whereas the continuity and momentum equations don't change, Pirozzoli re-wrote the energy term $(\rho\,e+p)u$ as $\rho\,h\,p$, where the specific enthalpy is given by $h=e+p/\rho$. With this, only the cubic form \eqref{eq:three_split} can be used for the energy equation, which results in
\begin{equation}\label{eq:op_pirozzoli}
\resizebox{0.9\textwidth}{!}{$
\opx^{PI}(U) := \begin{bmatrix}
\half\left((\rho u)_x+\rho(u)_x + u(\rho)_x\right)\\[0.1cm]
\frac{1}{4}\left[(\rho u^2)_x+\rho (u^2)_x + 2u(\rho u)_x + u^2(\rho)_x + 2\rho u(u)_x \right]+ p_x\\[0.1cm]
\frac{1}{4}\left[(\rho u v)_x+\rho (uv)_x + u(\rho v)_x + v(\rho u)_x + uv(\rho)_x + \rho v(u)_x + \rho u(v_x)\right]\\[0.1cm]
\frac{1}{4}\left[(\rho u w)_x+\rho (uw)_x + u(\rho w)_x + w(\rho u)_x + uw(\rho)_x + \rho w(u)_x + \rho u(w_x)\right]\\[0.1cm]
\frac{1}{4}\left[(\rho u h)_x + \rho(u h)_x + h(\rho u)_x + u(\rho h)_x+ u h(\rho)_x + \rho u(h)_x + \rho h (u)_x\right]
\end{bmatrix}.$}
\end{equation}

There are other split forms available in literature, e.g. given by Kravchenko and Moin \cite{FDaliasing}. However, we will restrict this discussion to the forms presented above. The goal of the paper is to show how to discretise such forms for discontinuous Galerkin methods. More specifically, we focus on DGSEM (with GL nodes), as this specific variant satisfies the diagonal norm SBP \eqref{eq:sbp} which is key to achieve a conservative approximation for the split forms, e.g. \cite{kopriva2014}. 

The remainder of the paper is organised as follows: In the next section the nodal collocation spectral element framework is introduced. The main Sec. \ref{sec:DG-Disc2} introduces the volume flux difference form and introduces specific numerical volume fluxes that generate known split forms of the compressible Euler equations. In  Sec. \ref{sec:numerical results} numerical experiments that support the theoretical findings are presented with our conclusions drawn in the last section.  

\section{The Nodal Discontinuous Galerkin Spectral Element Method}\label{sec:DG-Disc}

In this section, we provide the basic construction of a nodal discontinuous Galerkin spectral element method (DGSEM) on tensor product elements. The implementation of the DGSEM for the compressible Euler equations is based on \cite{Hindenlang2012}, which includes a detailed description of the standard weak form discretisation of the compressible Euler equations in divergence form. It is important to note \textblue{that} we consider the strong form of the DGSEM in this work due to its close relation to the so-called simultaneous-approximation term (SAT) SBP finite difference methods \cite{gassner_skew_burgers}. In \cite{KoprivaGassner_GaussLob} it was shown that the weak and strong forms are algebraically equivalent. This equivalence of strong form and weak form is itself equivalent to the SBP property of an operator \cite{gassner_skew_burgers}. We specifically focus on the volume discretisation of the Euler terms. All other parts of the implementation remain unchanged. Thus, it is straightforward to extend an existing DGSEM code to the split form approximations presented in this paper. We restrict the following discussion and the numerical results section to Cartesian meshes as our focus is on the effect of the non-linear terms of the compressible Euler equations. For completeness, we provide the algebraic extensions to curvilinear meshes in \ref{sec:curvilinear}.

The first step in the discretisation is to subdivide the computational domain into non-overlapping hexahedral elements $C$. Each element is transformed to the reference element $C_0=[-1,1]^3$ with a polynomial mapping. The degree of the mapping is chosen to be at most the degree of the element-wise polynomial approximation $N$ to ensure free-stream preservation \cite{Kopriva:2006er}. For non-curved hexahedral elements, the mapping we use is trilinear and reads as 
\begin{equation}\label{eq:StandardHexMap}
\begin{aligned}
\vec{X}(\vec{\xi}) =(X(\vec{\xi}),Y(\vec{\xi}),Z(\vec{\xi}))^T=  &\frac{1}{8}\left\{\vec{x}_1(1-\xi)(1-\eta)(1-\zeta)+\vec{x}_2(1+\xi)(1-\eta)(1-\zeta)\right.\\
            & + \vec{x}_3(1+\xi)(1+\eta)(1-\zeta)+\vec{x}_4(1-\xi)(1+\eta)(1-\zeta) \\
            & + \vec{x}_5(1-\xi)(1-\eta)(1+\zeta)+\vec{x}_6(1+\xi)(1-\eta)(1+\zeta) \\
            & + \left.\vec{x}_7(1+\xi)(1+\eta)(1+\zeta)+\vec{x}_8(1-\xi)(1+\eta)(1+\zeta)\right\},
\end{aligned}
\end{equation}
where $\vec{x}_i$, $i=1,\ldots,8$ are the physical coordinates of the corners of the hexahedral element and $\vec{\xi}=(\xi,\eta,\zeta)^T$ are the reference coordinates. As we restrict ourselves here to Cartesian meshes, the Jacobian and metric terms simplify to 
\begin{equation}
\label{eq:cartesian_metric}
J = \frac{1}{8}\Delta x\Delta y\Delta z,\quad X_{\xi} = \half\Delta x,\quad Y_{\eta} = \half\Delta y,\quad Z_{\zeta} = \half\Delta z,
\end{equation}
with element side lengths $\Delta x$, $\Delta y$, and $\Delta z$.

For each element, each component of the conservative variable vector is approximated by a polynomial of degree $N$ in reference space, e.g. for the density 
\begin{equation}
\label{eq:polynom}
u_1(x,y,z,t)\big|_{C} = \rho(x,y,z,t)\big|_{C}\approx \rho(\xi,\eta,\zeta,t) := \sum\limits_{i,j,k=0}^N \rho^{i,j,k}(t)\,\ell_i(\xi)\,\ell_j(\eta)\ell_k(\zeta),
\end{equation}
where $\{u_1^{i,j,k}(t)\}_{i,j,k=0}^{N}$ are the time dependent nodal degrees of freedom of the element $C$ at the nodes $(i,j,k)$. Each nodal interpolant is defined with $(N+1)^3$ GL nodes $\{\xi_i\}_{i=0}^N$, $\{\eta_j\}_{j=0}^N$, and $\{\zeta_j\}_{j=0}^N$ in the reference cube $C_0$. The associated Lagrange basis functions are given by
\begin{equation}
\label{eq:lagrange_basis}
 \ell_j(\xi)=\prod\limits_{i=0,i\neq j}^N\frac{\xi - \xi_i}{\xi_j-\xi_i},\qquad j=0,...,N,
\end{equation}
and satisfy the cardinal property
\begin{equation}
\label{cardinal}
\ell_j(\xi_i)=\delta_{ij},
\end{equation}
where $\delta_{ij}$ denotes Kronecker's symbol with $\delta_{ij}=1$ for $i=j$ and $\delta_{ij}=0$ for $i\neq j$. Integrating the polynomial Lagrange basis functions over the unit interval $[-1,1]$ gives the GL quadrature weights $\{\omega_j\}_{j=0}^N$. The GL nodes and weights form a quadrature rule with integration precision $2N-1$ that is used in the nodal DGSEM for the approximation of the weak form \textblue{integrals}. The Lagrange basis functions, $\{\ell_j\}_{j=0}^N$, are discretely orthogonal to each other, and the mass matrix is diagonal
\begin{equation}
\mmat:=\textrm{diag}([\omega_0,...,\omega_N]).
\end{equation}
In addition to the discrete integration, the polynomial basis functions and the interpolation nodes form a discrete differentiation. We introduce the polynomial derivative matrix
\begin{equation}
\label{eq:DmatDef}
D_{ij}=\frac{\partial\ell_j}{\partial\xi}\Bigg|_{\xi=\xi_i},\quad i,j=0,\ldots,N.
\end{equation}
As mentioned in the introduction, the derivative operator \eqref{eq:DmatDef} is special, as it satisfies the SBP-property \eqref{eq:sbp} for all polynomial degrees $N$, e.g. \cite{gassner_skew_burgers,carpenter_esdg,gassner_kepdg}. We define the matrix $\qmat:=\mmat\dmat$, which has the SBP-property $\qmat + \qmat^T=\mat{B}:=\textrm{diag}(-1,0,\ldots,0,1)$. The SBP-property is used to mimic integration-by-parts, by manipulating the derivative matrix to become
\begin{equation}
\dmat=\mmat^{-1}\qmat = \mmat^{-1}\mat{B}-\mmat^{-1}\qmat^T.
\end{equation}

With all these ingredients, we are able to formulate the standard DGSEM approximation of the divergence form of the flux equations \eqref{eq:operator_form}. We use the elemental mapping and the metric terms \eqref{eq:cartesian_metric} to transform the system \eqref{eq:operator_form} into the reference element
\begin{equation}
\label{eq:_transformed_operator_form}
J\,U_t + \widetilde{\op}_\xi(U) + \widetilde{\op}_\eta(U) + \widetilde{\op}_\zeta(U) = 0,
\end{equation}
with
\begin{equation}
\widetilde{\op}_\xi(U)= Y_\eta Z_\zeta \op_\xi(U),\,\, \widetilde{\op}_\eta(U)= X_\xi Z_\zeta \op_\eta(U),\,\, \widetilde{\op}_\zeta(U)=X_\xi Y_\eta \op_\zeta(U),
\end{equation}
where we used the restriction to Cartesian meshes to simplify the expressions. 

We collect the three dimensional DGSEM approximation of the divergence form of the equations written in indicial notation. Consider a component $l$ of the system \eqref{eq:_transformed_operator_form} and a GL node $(i,j,k)$. The DGSEM approximation in strong form is
\begin{equation}\label{eq:RHSStrong}
\begin{aligned}
(\op_\xi(U))_{ijk}^l&\approx \left[F^{*,l}(1,\eta_j,\zeta_k;\vec{n}) - F^l_{Njk}\right] - \left[F^{*,l}(-1,\eta_j,\zeta_k;\vec{n}) - F^l_{0jk}\right]+\sum_{m=0}^N D_{im}F^l_{mjk},\\
(\op_\eta(U))_{ijk}^l&\approx \left[G^{*,l}(\xi_i,1,\zeta_k;\vec{n}) - G^l_{iNk}\right] - \left[G^{*,l}(\xi_i,-1,\zeta_k;\vec{n}) - G^l_{i0k}\right]+\sum_{m=0}^N D_{jm}G^l_{imk},\\
(\op_\zeta(U))_{ijk}^l&\approx \left[H^{*,l}(\xi_i,\eta_j,1;\vec{n}) - H^l_{ijN}\right] - \left[H^{*,l}(\xi_i,\eta_j,-1;\vec{n}) - H^l_{ij0}\right]+\sum_{m=0}^N D_{km}H^l_{ijm},
\end{aligned}
\end{equation}
where $l=1,\ldots,5$. We use collocation for the non-linear flux functions, e.g. $F^l_{ijk}:=f^l(U_{ijk})$, and denote the outward pointing normal vector by $\vec{n}$. Due to the discontinuous nature of the approximation space, it is necessary to introduce numerical surface flux functions, which resolve the jumps at the interface. These specific surface flux functions depend on the left and right state values at an interface and are marked by a $*$. We will specify the numerical surface fluxes in Sec. \ref{sec:numflux}.\\ 
\textblue{\begin{rem}
We note, that in the case of over-integration the fluxes $F^l_{mjk}$, $G^l_{imk}$, $H^l_{ijm}$ in sum in \eqref{eq:RHSStrong} need to be changed. Instead of using a collocation approach where the nodal coefficients of the fluxes are just evaluated with the nodal values of the discrete solution $U$, it is necessary to compute a proper $L_2$-projection of the non-linear fluxes onto the polynomial space of degree $N$. This can be, for instance, implemented with an intermediate quadrature grid with a high number of nodes (such that the integrals in the $L_2$-projection are evaluated exactly), see e.g.  \cite{Kirby2003,tcfd2012,mengaldo2015}.\\ 
\end{rem}}

The resulting semi-discrete form is a coupled system of ordinary differential equations in time, which we integrate with a low storage 5-stage 4th order accurate explicit Runge-Kutta method, e.g. \cite{Kennedy1994}.

\section{Split form stabilisation for  DGSEM}\label{sec:DG-Disc2}

In this section we demonstrate how to implement the different split forms. To do so, we begin with a standard strong form DGSEM implementation \eqref{eq:RHSStrong}, and modify the discrete volume integrals that lead to $\sum_{m=0}^N D_{im}F^l_{mjk}$, $\sum_{m=0}^N D_{jm}G^l_{imk}$ and $\sum_{m=0}^N D_{km}H^l_{ijm}$ to include the different split forms presented in the introduction, Sec. \ref{sec:fd_dealiasing}.

\subsection{DGSEM with numerical volume flux function}\label{sec:DGSEM_fluxform}

An important result that we use in this work is presented in Fisher et al. \cite{fisher2013} and Carpenter et al. \cite{carpenter_esdg}. These authors showed that it is possible to rewrite the application of a differencing operator $\dmat$ with the diagonal SBP property into an equivalent subcell based finite volume type differencing formulation 
\begin{equation}
\label{eq:flux-differencing_vol}
\begin{split}
&\sum_{m=0}^N D_{im}F^l_{mjk} = \frac{\bar{F}^l_{(i+1)jk} - \bar{F}^l_{(i)jk}}{\omega_i},\quad i,j,k=0,...,N,\\
&\sum_{m=0}^N D_{jm}G^l_{imk}= \frac{\bar{G}^l_{i(j+1)k} - \bar{G}^l_{i(j)k}}{\omega_j},\quad i,j,k=0,...,N,\\
&\sum_{m=0}^N D_{km}H^l_{ijm}= \frac{\bar{H}^l_{ij(k+1)} - \bar{H}^l_{ij(k)}}{\omega_k},\quad i,j,k=0,...,N,
\end{split}
\end{equation}
with consistent auxiliary fluxes $\{\bar{F}^l_{(ii)jk}\}_{ii,j,k=0}^{(N+1),N,N}$, $\{\bar{G}^l_{i(jj)k}\}_{i,jj,k=0}^{N,(N+1),N}$ and $\{\bar{H}^l_{ij(kk)}\}_{i,j,kk=0}^{N,N,(N+1)}$. This result is important, as it directly gives conservation of diagonal norm SBP discretisations in the sense of Lax-Wendroff due to the telescoping flux differencing \cite{fisher2013}. In addition to the desired conservation property, the flux differencing interpretation \eqref{eq:flux-differencing_vol} enables the construction of entropy conserving discretisations without relying on exact integration. 

Fisher et al. \cite{fisher2013} and Carpenter et al. \cite{carpenter_esdg} used a diagonal norm SBP operator, the volume flux differencing relation \eqref{eq:flux-differencing_vol}, and the existence of a two-point entropy conserving flux function $F_{EC}^{\#} = F_{EC}^{\#}(U_{ijk},U_{mjk})$ to obtain a high-order accurate discretisation 
\begin{equation}
\label{eq:highorder_flux}
\frac{\bar{F}^l_{(i+1)jk} - \bar{F}^l_{(i)jk}}{\omega_i} \approx 2\,\sum\limits_{m=0}^N D_{im}\,F^{\#,l}_{EC}(U_{ijk},U_{mjk}).
\end{equation}
The important theorems and proofs can be found e.g. in Fisher et al. \cite{fisher2013} (Thms. 3.1 and 3.2 on page 15 and 18). By choosing the two-point entropy flux $F_{EC}^{\#}$ also as the numerical surface flux in a DGSEM discretisation, with the volume terms computed with \eqref{eq:highorder_flux}, it was shown that the resulting DG discretisation conserves the (discrete) integral of the entropy \cite{carpenter_esdg}. Note that the goal of the work of Fisher and Carpenter et al. \cite{carpenter_esdg,fisher2013} is not to construct an entropy conserving discretisation, but a discretisation that is entropy stable. However, from a scheme that exactly preserves entropy, it is possible to introduce dissipation terms, e.g. at the element interfaces so that the (discrete) integral of entropy is guaranteed to decrease (typically named \textit{entropy stability})\textblue{, e.g. \cite{parsaniCITEKEY,gassner_skew_burgers}}. Thus, entropy conservation can be seen as an intermediate step to entropy stability. 

In the present work, we rewrite the volume terms using the flux differencing form \eqref{eq:highorder_flux}. However, we keep the expression general and use the yet to be specified numerical volume fluxes $F^{\#}$, $G^{\#}$, and $H^{\#}$
\begin{equation}\label{eq:RHSfluxform}
\begin{aligned}
(\op_\xi(U))_{ijk}^l&\approx \left[F^{*,l}(1,\eta_j,\zeta_k;\vec{n}) - F^l_{Njk}\right] - \left[F^{*,l}(-1,\eta_j,\zeta_k;\vec{n}) - F^l_{0jk}\right]+2\,\sum\limits_{m=0}^N D_{im}\,F^{\#,l}(U_{ijk},U_{mjk}),\\
(\op_\eta(U))_{ijk}^l&\approx \left[G^{*,l}(\xi_i,1,\zeta_k;\vec{n}) - G^l_{iNk}\right] - \left[G^{*,l}(\xi_i,-1,\zeta_k;\vec{n}) - G^l_{i0k}\right]+2\,\sum\limits_{m=0}^N D_{jm}\,G^{\#,l}(U_{ijk},U_{imk}),\\
(\op_\zeta(U))_{ijk}^l&\approx \left[H^{*,l}(\xi_i,\eta_j,1;\vec{n}) - H^l_{ijN}\right] - \left[H^{*,l}(\xi_i,\eta_j,-1;\vec{n}) - H^l_{ij0}\right]+2\,\sum\limits_{m=0}^N D_{km}\,H^{\#,l}(U_{ijk},U_{ijm}).
\end{aligned}
\end{equation}
We show in the next section that the reformulation of the volume integrals in \eqref{eq:RHSfluxform} plays an important role and that it is possible to generate many variants of DGSEM for the compressible Euler equations. 
We note that, in principle, it is possible to choose the numerical surface and volume flux differently. However, to restrict the myriads of possible combinations in this work, we couple the choice of the numerical volume flux and the numerical surface flux, detailed in Sec. \ref{sec:numflux} below.

\subsection{Split form DSGEM}\label{sec:split form}

In this section, we discuss three simple identities used to introduce specific numerical volume flux functions that recover the alternative compressible Euler operator split forms discussed in the introduction. To formulate these identities we consider three specific choices of the numerical volume flux, namely an arithmetic mean, the product of two arithmetic means and the product of three arithmetic means. We use the typical DG notation
\begin{equation}
\average{a}_{im}:= \frac{1}{2}(a_i + a_m),
\end{equation}
 for an arithmetic mean of a generic nodal quantity $a_i,\,\,i=0,...,N$. 

\begin{Lem}[Discrete split forms]\label{Lem}  Using the arithmetic mean, the product of two arithmetic means, or the product of three arithmetic means in the alternative numerical volume flux form of the DGSEM \eqref{eq:RHSfluxform}, it is possible to recover the standard DGSEM derivative form \eqref{eq:flux-differencing_vol}, the discrete quadratic split form \eqref{eq:two_split} and the discrete cubic split form \eqref{eq:three_split} respectively. More precisely, for generic nodal vector fields $\vec{a}=(a_0,...,a_N)^T$, $\vec{b}=(b_0,...,b_N)^T$, and $\vec{c}=(c_0,...,c_N)^T$,
\begin{equation}\label{eq:skewsym_identities}
\begin{aligned}
2\,\sum\limits_{m=0}^N D_{im}\,\average{a}_{im} &= (\dmat\,\vec{a})_i,\\
2\,\sum\limits_{m=0}^N D_{im}\,\average{a}_{im}\,\average{b}_{im} &=\frac{1}{2}\left(\dmat\,\uuline{a}\,\vec{b} + \uuline{a}\,\dmat\,\vec{b} + \uuline{b}\,\dmat\,\vec{a}\right)_i,\\
2\,\sum\limits_{m=0}^N D_{im}\,\average{a}_{im}\,\average{b}_{im}\,\average{c}_{im} &=\frac{1}{4}\left(\dmat\,\uuline{a}\,\uuline{b}\,\vec{c} + \uuline{a}\,\dmat\,\uuline{b}\,\vec{c} + \uuline{b}\,\dmat\,\uuline{a}\,\vec{c} + \uuline{c}\,\dmat\,\uuline{a}\,\vec{b} + \uuline{b}\,\uuline{c}\,\dmat\,\vec{a} + \uuline{a}\,\uuline{c}\,\dmat\,\vec{b} + \uuline{a}\,\uuline{b}\,\dmat\,\vec{c}\right)_i,
\end{aligned}
\end{equation}
where we introduce the additional notation that double underline denotes a matrix with a vector along the diagonal, e.g., $\uuline{a} = diag(\vec{a})$.
\end{Lem}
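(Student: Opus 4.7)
The plan is to verify each of the three identities by direct expansion, using just one fact about the DGSEM derivative matrix: since $\dmat$ reproduces the derivative of any polynomial of degree $\le N$ exactly, it annihilates constants, i.e. $\sum_{m=0}^N D_{im}=0$ for every $i$. This single consistency identity drives all three statements; no additional SBP machinery, no quadrature exactness, and no properties of the GL nodes beyond this are needed. I would also note at the outset the elementary fact that, since $\uuline{a}=\textrm{diag}(\vec a)$ is diagonal, $\uuline{a}\vec b$ is the Hadamard product $(a_ib_i)_{i}$, and the diagonal matrices $\uuline{a},\uuline{b},\uuline{c}$ all commute among themselves, so the compositions on the right-hand side can be read off from the pointwise products that arise after expansion.

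For the first identity I would expand $2\average{a}_{im}=a_i+a_m$, pull the $i$-dependent factor outside the sum, kill the constant part via $\sum_m D_{im}=0$, and recognise the remainder as $\sum_m D_{im}a_m=(\dmat\vec a)_i$. For the second identity, the same strategy applied to $4\,\average{a}_{im}\average{b}_{im}=(a_i+a_m)(b_i+b_m)$ yields four contributions: the pure $a_ib_i$ term vanishes by consistency, and the remaining three evaluate to $a_i(\dmat\vec b)_i$, $b_i(\dmat\vec a)_i$, and $(\dmat(\uuline{a}\vec b))_i$ respectively, which is exactly $\tfrac12\bigl(\dmat\uuline{a}\vec b+\uuline{a}\dmat\vec b+\uuline{b}\dmat\vec a\bigr)_i$.

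The cubic case uses the same mechanism applied to
\begin{equation*}
8\,\average{a}_{im}\average{b}_{im}\average{c}_{im}=(a_i+a_m)(b_i+b_m)(c_i+c_m),
\end{equation*}
which splits into eight monomial types: one ``pure $i$'' term $a_ib_ic_i$, three ``one-$m$'' terms ($a_ib_ic_m$, $a_ib_mc_i$, $a_mb_ic_i$), three ``two-$m$'' terms ($a_ib_mc_m$, $a_mb_ic_m$, $a_mb_mc_i$), and one ``pure $m$'' term $a_mb_mc_m$. The pure-$i$ term is annihilated by consistency; each one-$m$ term factors two $i$-values out of the sum and produces a contribution of the shape $\uuline{\cdot}\,\uuline{\cdot}\,\dmat\,\vec{\cdot}$; each two-$m$ term factors one $i$-value out and produces a contribution of the shape $\uuline{\cdot}\,\dmat\,\uuline{\cdot}\,\vec{\cdot}$; and the pure-$m$ term gives $\dmat\,\uuline{a}\,\uuline{b}\,\vec c$. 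Multiplying by $\tfrac14$ and matching monomials yields exactly the seven terms on the right-hand side.

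The only real obstacle is bookkeeping in the cubic case: making sure the seven surviving monomials are distributed into the three ``outside $\dmat$'' matrices and the three ``inside $\dmat$'' matrices in the correct pattern. I would organise this by tabulating, for each of the eight expanded monomials, which indices are ``frozen'' at $i$ (and thus correspond to a $\uuline{\cdot}$ factor left of or right of $\dmat$, depending on whether the frozen index belongs to a factor outside or inside the $D_{im}$ sum) versus which are summed against $D_{im}$. Once this table is written out there is nothing further to do, since the identity is a purely algebraic combinatorial identity in the nodal values.
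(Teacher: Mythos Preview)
Your proposal is correct and follows essentially the same route as the paper: expand the products of arithmetic means, pull the $i$-indexed factors outside the sum, kill the pure-$i$ term via the row-sum-zero property, and identify the surviving terms with the matrix expressions on the right. The only cosmetic difference is that the paper first passes to $\qmat=\mmat\dmat$ and invokes $\sum_m Q_{im}=0$ from the SBP literature, whereas you work directly with $\dmat$ and justify $\sum_m D_{im}=0$ by exactness on constants; since $\mmat$ is diagonal with positive weights these two statements are equivalent, so the arguments coincide.
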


\begin{proof}
The proof of each of the split form identities is straightforward. However, each proof requires some algebraic manipulation unrelated to the focus of this paper. So we have moved the proofs to \ref{LemProof}.
\end{proof}

\begin{rem}
We note that the right hand sides of \eqref{eq:skewsym_identities} are straightforward discretisations of the quadratic and the cubic split forms \eqref{eq:two_split} and \eqref{eq:three_split}, respectively. 
\end{rem}

\begin{rem}\label{rem:advective_form}
By combining the split form identities, it is easy to see that the pure advective form of the quadratic product can be generated with the numerical volume flux of the form $\average{a}_{im}\,\average{b}_{im} -\frac{1}{2}\average{a\,b}_{im}$.
\end{rem}

\begin{rem}
We further note that all the derivations and proofs hold for general diagonal norm SBP operators and thus, for instance, all results in this paper directly carry over to finite difference diagonal norm SBP operators. 
\end{rem}

\begin{rem}
Analogously, it is possible to find equivalent forms of quartic products or products with even more terms when necessary. 
\end{rem}

With the identities presented in Lemma \ref{Lem}, it is now possible to state the second main result of this work, summarised in the following theorem.

\begin{thm}[Equivalent Numerical Volume Fluxes]\label{Thm2} A high-order accurate and consistent DGSEM discretisation of the alternative Euler formulations of Morinishi \eqref{eq:op_morinishi}, Ducros et al. \eqref{eq:op_ducros}, Kennedy and Gruber \eqref{eq:op_KG}, and Priozzoli \eqref{eq:op_pirozzoli} can be generated by translating the quadratic and cubic split forms into the corresponding numerical volume flux. To compress the notation for the volume fluxes we note that the averages are taken in each spatial direction. For example, the average density in the $x$-direction or $z$-direction are given by
\begin{equation}
\begin{aligned}
F^{\#,1}(U_{ijk},U_{mjk}) &= \average{\rho} = \half\left(\rho_{ijk} + \rho_{mjk}\right),\\[0.1cm]
H^{\#,1}(U_{ijk},U_{ijm}) &= \average{\rho} = \half\left(\rho_{ijk} + \rho_{ijm}\right),
\end{aligned}
\end{equation}
respectively. The volume fluxes for the standard DGSEM in the divergence form and for each alternative formulation of the Euler equations are:
\begin{itemize}
\item[] \underline{Standard DG \cite{koprivabook}}:
\begin{equation}
\begin{aligned}
F_{standard}^{\#}(U_{ijk},U_{mjk}) = &\begin{bmatrix} \average{\rho u} \\[0.1cm]
\average{\rho u^2} + \average{p} \\[0.1cm]
\average{\rho u v} \\[0.1cm]
\average{\rho u w} \\[0.1cm]
\average{u(\rho e + p)}\end{bmatrix},
\quad
G_{standard}^{\#}(U_{ijk},U_{imk}) = \begin{bmatrix} \average{\rho v} \\[0.1cm]
\average{\rho u v}  \\[0.1cm]
\average{\rho v^2} + \average{p} \\[0.1cm]
\average{\rho v w} \\[0.1cm]
\average{v(\rho e + p)}\end{bmatrix},
\\ \\
&\qquad\qquad H_{standard}^{\#}(U_{ijk},U_{ijm})= \begin{bmatrix} \average{\rho w} \\[0.1cm]
\average{\rho u w}\\[0.1cm]
\average{\rho v w} \\[0.1cm]
\average{\rho w^2} + \average{p} \\[0.1cm]
\average{w(\rho e + p)}\end{bmatrix}.
\end{aligned}
\end{equation}

\item[] \underline{Morinishi \cite{Morinishi2010276}}:
\begin{equation}
\resizebox{0.85\textwidth}{!}{$
\begin{aligned}
F_{MO}^{\#}(U_{ijk},U_{mjk}) = &\begin{bmatrix} \average{\rho\,u} \\[0.1cm]
\average{\rho u}\average{u} + \average{p} \\[0.1cm]
\average{\rho u}\average{v} \\[0.1cm]
\average{\rho u}\average{w} \\[0.1cm]
\average{(\rho\theta+p)\,u} + \average{\rho\,u^2}\average{u}+\average{\rho\,u\,v}\average{v}+\average{\rho\,u\,w}\average{w}-\frac{1}{2}\left(\average{\rho\,u^3}+\average{\rho\,u\,v^2}+\average{\rho\,u\,w^2} \right)\end{bmatrix},\\ \\
G_{MO}^{\#}(U_{ijk},U_{imk}) = &\begin{bmatrix} \average{\rho\,v} \\[0.1cm]
\average{\rho v}\average{u}\\[0.1cm]
\average{\rho v}\average{v} + \average{p}  \\[0.1cm]
\average{\rho v}\average{w} \\[0.1cm]
\average{(\rho\theta+p)\,v} + \average{\rho\,u\,v}\average{u}+\average{\rho\,v^2}\average{v}+\average{\rho\,v\,w}\average{w}-\frac{1}{2}\left(\average{\rho\,u^2\,v}+\average{\rho\,v^3}+\average{\rho\,v\,w^2} \right)\end{bmatrix},\\ \\
H_{MO}^{\#}(U_{ijk},U_{ijm}) = &\begin{bmatrix} \average{\rho\,w} \\[0.1cm]
\average{\rho w}\average{u}\\[0.1cm]
\average{\rho w}\average{v} + \average{p}  \\[0.1cm]
\average{\rho w}\average{w} \\[0.1cm]
\average{(\rho\theta+p)\,w} + \average{\rho\,u\,w}\average{u}+\average{\rho\,v\,w}\average{v}+\average{\rho\,w^2}\average{w}-\frac{1}{2}\left(\average{\rho\,u^2\,w}+\average{\rho\,v^2\,w}+\average{\rho\,w^3} \right)\end{bmatrix}.
\end{aligned}$}
\end{equation}

\item[] \underline{Ducros et al. \cite{ducros2000}}:
\begin{equation}
\begin{aligned}
F_{DU}^{\#}(U_{ijk},U_{mjk}) = &\begin{bmatrix} \average{\rho}\average{u} \\[0.1cm]
\average{\rho u}\average{u} + \average{p} \\[0.1cm]
\average{\rho v}\average{u} \\[0.1cm]
\average{\rho w}\average{u} \\[0.1cm]
(\average{\rho e} + \average{p})\average{u}\end{bmatrix},
\quad 
G_{DU}^{\#}(U_{ijk},U_{imk}) = \begin{bmatrix} \average{\rho}\average{v} \\[0.1cm]
\average{\rho u}\average{v}  \\[0.1cm]
\average{\rho v}\average{v} + \average{p}\\[0.1cm]
\average{\rho w}\average{v} \\[0.1cm]
(\average{\rho e} + \average{p})\average{v}\end{bmatrix},
\\ \\
&\qquad\qquad H_{DU}^{\#}(U_{ijk},U_{ijm}) = \begin{bmatrix} \average{\rho}\average{w} \\[0.1cm]
\average{\rho u}\average{w}  \\[0.1cm]
\average{\rho v}\average{w} \\[0.1cm]
\average{\rho w}\average{w}+ \average{p} \\[0.1cm]
(\average{\rho e} + \average{p})\average{w}\end{bmatrix}.
\end{aligned}
\end{equation}

\item[] \underline{Kennedy and Gruber \cite{kennedy2008}}:
\begin{equation}
\resizebox{0.86\textwidth}{!}{$
\begin{aligned}
F_{KG}^{\#}(U_{ijk},U_{mjk}) = &\begin{bmatrix} \average{\rho}\average{u} \\[0.1cm]
\average{\rho}\average{u}^2 + \average{p} \\[0.1cm]
\average{\rho}\average{u}\average{v} \\[0.1cm]
\average{\rho}\average{u}\average{w} \\[0.1cm]
\average{\rho}\average{u}\average{e} + \average{p}\average{u}\end{bmatrix},
\quad 
G_{KG}^{\#}(U_{ijk},U_{imk}) = \begin{bmatrix} \average{\rho}\average{v} \\[0.1cm]
\average{\rho}\average{u}\average{v}  \\[0.1cm]
\average{\rho}\average{v}^2 + \average{p}\\[0.1cm]
\average{\rho}\average{v}\average{w}\\[0.1cm]
\average{\rho}\average{v}\average{e} + \average{p}\average{v}\end{bmatrix},
\\ \\
&\qquad\qquad H_{KG}^{\#}(U_{ijk},U_{ijm})  = \begin{bmatrix} \average{\rho}\average{w} \\[0.1cm]
\average{\rho}\average{u}\average{w}  \\[0.1cm]
\average{\rho}\average{v}\average{w} \\[0.1cm]
\average{\rho}\average{w}^2+ \average{p} \\[0.1cm]
\average{\rho}\average{w}\average{e} + \average{p}\average{w}\end{bmatrix}.
\end{aligned}$}
\end{equation}

\item[] \underline{Pirozzoli \cite{pirozzoli2011}}: 
\begin{equation}\label{eq:numflux_priozzli}
\begin{aligned}
F_{PI}^{\#}(U_{ijk},U_{mjk}) = &\begin{bmatrix} \average{\rho}\average{u} \\[0.1cm]
\average{\rho}\average{u}^2 + \average{p} \\[0.1cm]
\average{\rho}\average{u}\average{v} \\[0.1cm]
\average{\rho}\average{u}\average{w} \\[0.1cm]
\average{\rho}\average{u}\average{h}\end{bmatrix},
\quad 
G_{PI}^{\#}(U_{ijk},U_{imk})= \begin{bmatrix} \average{\rho}\average{v} \\[0.1cm]
\average{\rho}\average{u}\average{v}  \\[0.1cm]
\average{\rho}\average{v}^2 + \average{p}\\[0.1cm]
\average{\rho}\average{v}\average{w}\\[0.1cm]
\average{\rho}\average{v}\average{h}\end{bmatrix},
\\ \\
&\qquad\qquad H_{PI}^{\#}(U_{ijk},U_{ijm}) = \begin{bmatrix} \average{\rho}\average{w} \\[0.1cm]
\average{\rho}\average{u}\average{w}  \\[0.1cm]
\average{\rho}\average{v}\average{w} \\[0.1cm]
\average{\rho}\average{w}^2+ \average{p} \\[0.1cm]
\average{\rho}\average{w}\average{h}\end{bmatrix}.
\end{aligned}
\end{equation}
\end{itemize}
\end{thm}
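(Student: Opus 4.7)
The plan is to verify, component by component, that each listed numerical volume flux recovers the corresponding split-form operator upon substitution into the flux-differencing volume discretisation \eqref{eq:RHSfluxform}. Consistency and high-order accuracy come along for free: each of the fluxes is a symmetric function of its two state arguments and collapses to the physical Euler flux when $U_{ijk}=U_{mjk}$, so the flux-differencing relation \eqref{eq:flux-differencing_vol} inherits the formal order of the underlying diagonal-norm SBP operator $\dmat$. The heart of the proof is therefore algebraic: translate every entry of every flux vector into the right-hand side of Lemma \ref{Lem}.

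First I would handle the three elementary building blocks. By the first identity of Lemma \ref{Lem}, any entry of the form $\average{a}$ produces the standard derivative $(\dmat\vec{a})_i$. By the second identity, an entry of the form $\average{a}\average{b}$ produces the quadratic split form \eqref{eq:two_split} of $(ab)_x$. By the third identity, an entry of the form $\average{a}\average{b}\average{c}$ produces the cubic split form \eqref{eq:three_split} of $(abc)_x$. With these three templates in hand, the standard DG flux is a direct application of identity (i), the Ducros flux is a direct application of identity (ii) to each component of \eqref{eq:op_ducros}, and the Pirozzoli flux is a direct application of identity (ii) for the continuity equation and identity (iii) for the momentum and energy equations, matching \eqref{eq:op_pirozzoli} term by term.

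The two remaining cases require a little more care. For the Kennedy--Gruber energy flux, the term $\average{\rho}\average{u}\average{e}$ yields the cubic split form of $(\rho e u)_x$ via identity (iii), and the additional term $\average{p}\average{u}$ yields the quadratic split form of $(pu)_x$ via identity (ii); together they reproduce the last row of \eqref{eq:op_KG}. The momentum and continuity components of Kennedy--Gruber are again immediate from identities (ii) and (iii). The Morinishi energy equation is the most intricate: the leading contribution $\average{(\rho\theta+p)u}$ supplies the divergence piece $((\rho\theta+p)u)_x$ via identity (i), while each of the three groups $\average{\rho u^2}\average{u}-\tfrac{1}{2}\average{\rho u^3}$, $\average{\rho u v}\average{v}-\tfrac{1}{2}\average{\rho u v^2}$, $\average{\rho u w}\average{w}-\tfrac{1}{2}\average{\rho u w^2}$ is handled by the pure advective-form identity noted in Remark \ref{rem:advective_form}, namely $\average{a}\average{b}-\tfrac{1}{2}\average{ab}\mapsto\tfrac{1}{2}(a b_x + b a_x)$, with $a=\rho u^2,\rho u v,\rho u w$ and $b=u,v,w$ respectively. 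Summing these reproduces exactly the kinetic-energy contributions in \eqref{eq:op_morinishi}. The Morinishi momentum and continuity components are once again covered by identities (i) and (ii).

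The main obstacle is not mathematical depth but bookkeeping: matching every one of the five entries of each of the four flux vectors to the correct combination of derivatives in the split-form operators, without dropping factors or signs. Once the correspondence is laid out as above, the theorem follows immediately from Lemma \ref{Lem} together with Remark \ref{rem:advective_form}, so the full proof is essentially a tabulation exercise that can be carried out one operator at a time.
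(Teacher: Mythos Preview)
Your proposal is correct and follows essentially the same route as the paper: translate each flux component via the three identities of Lemma~\ref{Lem}, handle the Morinishi energy terms with the pure advective combination of Remark~\ref{rem:advective_form}, and invoke symmetry plus consistency of the two-point fluxes (the paper cites Fisher et al.\ explicitly for the Taylor-expansion argument) to conclude high-order accuracy. The only cosmetic difference is that the paper also mentions the alternative accuracy argument that the right-hand sides of Lemma~\ref{Lem} are manifestly high-order discretisations via $\dmat$, which you implicitly use as well.
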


\begin{proof}
The proof follows directly when applying the results from Lem. \ref{Lem} to the respective alternative Euler formulations. All of the alternative formulations are built from either the divergence form, the advective form, the quadratic or cubic split forms. We note that we use the first part of Lem. \ref{Lem} to generate the standard flux divergence form. Furthermore, we need the pure advective form in the energy equation of the Morinishi (MO) form. To obtain the pure advective form we use the quadratic split form and subtract the divergence form scaled by one half, see Rem. \ref{rem:advective_form}. The other quadratic and cubic split forms are directly translated into their flux form using the second and third identity from the Lem. \ref{Lem}. Note that all fluxes are consistent, i.e. 
\begin{equation}
F^{\#}(U_{ijk},U_{ijk}) = F(U_{ijk}),
\end{equation}
and symmetric in their arguments, e.g.
\begin{equation}
F^{\#}(U_{ijk},U_{mjk}) = F^{\#}(U_{mjk},U_{ijk}).
\end{equation}
In the second part of the proof by Fisher et al. \cite{fisher2013} it was shown via Taylor expansion (second part of the proof of Thm. 3.1 on pages 15 and 16) that for general two-point entropy conserving flux functions the volume flux differencing approximation \eqref{eq:highorder_flux} gives a high-order accurate scheme. However, the only properties that are used to prove high-order accuracy are consistency of the numerical volume flux and its symmetry. Thus, we can directly extend the proof by Fisher et al. to the numerical volume fluxes presented in Thm. \ref{Thm2}, as all numerical volume fluxes are indeed symmetric and consistent in their arguments, as noted above. Another way of proving the high-order accuracy is to use again the identities of Lem. \ref{Lem}, as it is clear that the equivalent split form discretisations (right hand side of the identities in Lem. \ref{Lem}) are high-order accurate discretisations of the continuous split forms with the high-order accurate derivative matrix $\dmat$. 
\end{proof}

\begin{rem}
For central finite difference discretisations, Pirozzoli \cite{pirozzoli2010} already found a way to rewrite his alternative formulation \eqref{eq:op_pirozzoli} into an equivalent flux differencing form. In fact, ignoring boundary conditions, central finite difference schemes considered by Pirozzoli have the SBP property and thus the numerical volume flux described here \eqref{eq:numflux_priozzli} coincides with the one found by Pirozzoli. 
\end{rem}

\begin{rem} To compare the split form DGSEM \eqref{eq:RHSfluxform} to entropy conservative methods, we use the variants introduced by Carpenter et al. \cite{carpenter_esdg}. As stated above, the motivation of the work by Fisher et al. \cite{fisher2013} and Carpenter et al. \cite{carpenter_esdg} is the construction of an entropy stable discretisation. The volume terms of the DGSEM are entropy conserving when using a two-point entropy conserving flux function in the volume flux differencing formulation \cite{carpenter_esdg}. The numerical surface fluxes can then be used to control the dissipation of the discretisation and to guarantee that entropy is decreasing, i.e. entropy stability. To generate two DGSEM variants with entropy conserving volume terms, we choose the following two numerical volume fluxes:
\begin{itemize}
\item[] \underline{Ismail and Roe \cite{ismail2009}}: Introduce the parameter vector
\begin{equation}\label{zinProof}
\vec{z} = \left[\sqrt{\frac{\rho}{p}},\sqrt{\frac{\rho}{p}}u,\sqrt{\frac{\rho}{p}}v,\sqrt{\frac{\rho}{p}}w,\sqrt{\rho p}\right]^T,
\end{equation}
and the averaged quantities for the primitive variables
\begin{equation}
\begin{aligned}
&\qquad\qquad\qquad\hat{\rho} = \average{z_1}z_5^{\ln},\;\;\hat{u}=\frac{\average{z_2}}{\average{z_1}},\;\; \hat{v}=\frac{\average{z_3}}{\average{z_1}},\;\;\hat{w} = \frac{\average{z_4}}{\average{z_1}},\\[0.1cm]
&\hat{p}_1 = \frac{\average{z_5}}{\average{z_1}},\;\;\hat{p}_2 = \frac{\gamma+1}{2\gamma}\frac{z_5^{\ln}}{z_1^{\ln}} + \frac{\gamma-1}{2\gamma}\frac{\average{z_5}}{\average{z_1}},\;\; \hat{h} = \frac{\gamma \hat{p}_2}{\hat{\rho}(\gamma-1)} + \half(\hat{u}^2 + \hat{v}^2 + \hat{w}^2),
\end{aligned}
\end{equation}
where we introduce the logarithmic mean 
\begin{equation}\label{logMean}
(\cdot)^{\ln} \coloneqq \frac{(\cdot)_L - (\cdot)_R}{\ln((\cdot)_L) - \ln((\cdot)_R)},
\end{equation}
needed for the entropy conserving flux functions. We note that a numerically stable procedure to compute the logarithmic mean is described by Ismail and Roe \cite[Appendix B]{ismail2009}. The entropy conserving fluxes in each physical direction read as
\begin{equation}
\label{eq:IRflux}
\resizebox{0.86\textwidth}{!}{$
 F_{IR}^{\#}(U_{ijk},U_{mjk}) = \begin{bmatrix} \hat{\rho}\hat{u} \\[0.1cm]
\hat{\rho}\hat{u}^2 + \hat{p}_1 \\[0.1cm]
\hat{\rho}\hat{u}\hat{v} \\[0.1cm]
\hat{\rho}\hat{u}\hat{w} \\[0.1cm]
\hat{\rho}\hat{u}\hat{h}\end{bmatrix},
\quad 
G_{IR}^{\#}(U_{ijk},U_{imk})= \begin{bmatrix} \hat{\rho}\hat{v} \\[0.1cm]
\hat{\rho}\hat{u}\hat{v}  \\[0.1cm]
\hat{\rho}\hat{v}^2 + \hat{p}_1\\[0.1cm]
\hat{\rho}\hat{v}\hat{w}\\[0.1cm]
\hat{\rho}\hat{v}\hat{h}\end{bmatrix},
\quad H_{IR}^{\#}(U_{ijk},U_{ijm}) = \begin{bmatrix} \hat{\rho}\hat{w} \\[0.1cm]
\hat{\rho}\hat{u}\hat{w}  \\[0.1cm]
\hat{\rho}\hat{v}\hat{w} \\[0.1cm]
\hat{\rho}\hat{w}^2+ \hat{p}_1 \\[0.1cm]
\hat{\rho}\hat{w}\hat{h}\end{bmatrix}.$}
\end{equation}

\item[] \underline{Chandrashekar \cite{chandrashekar2013}}: Introduce notation for the inverse of the temperature 
\begin{equation}
\label{eq:beta}
\beta = \frac{1}{RT} = \frac{\rho}{2 p},
\end{equation}
and the average pressure and enthalpy
\begin{equation}
\resizebox{0.85\textwidth}{!}{$
\hat{p} = \frac{\average{\rho}}{2\average{\beta}},\quad \hat{h} = \frac{1}{2\beta^{\ln}(\gamma-1)} -\half\left(\average{u^2} + \average{v^2} + \average{w^2}\right) + \frac{\hat{p}}{\rho^{\ln}} + \average{u}^2+ \average{v}^2+ \average{w}^2,$}
\end{equation}
then we have the entropy conserving and kinetic energy preserving flux components
\begin{equation}\label{EKEP}
\resizebox{0.85\textwidth}{!}{$
F_{CH}^{\#}(U_{ijk},U_{mjk}) = \begin{bmatrix} \rho^{\ln}\average{u} \\[0.1cm]
{\rho^{\ln}}\average{u}^2 + \hat{p} \\[0.1cm]
{\rho^{\ln}}\average{u}\average{v} \\[0.1cm]
{\rho^{\ln}}\average{u}\average{w} \\[0.1cm]
{\rho^{\ln}}\average{u}\hat{h}\end{bmatrix},
\quad 
G_{CH}^{\#}(U_{ijk},U_{imk})= \begin{bmatrix} \rho^{\ln}\average{v} \\[0.1cm]
{\rho^{\ln}}\average{u}\average{v}  \\[0.1cm]
{\rho^{\ln}}\average{v}^2 + \hat{p}\\[0.1cm]
{\rho^{\ln}}\average{v}\average{w} \\[0.1cm]
{\rho^{\ln}}\average{v}\hat{h}\end{bmatrix},
\quad 
H_{CH}^{\#}(U_{ijk},U_{ijm}) = \begin{bmatrix} \rho^{\ln}\average{w} \\[0.1cm]
{\rho^{\ln}}\average{u}\average{w} \\[0.1cm]
{\rho^{\ln}}\average{v}\average{w} \\[0.1cm]
{\rho^{\ln}}\average{w}^2 + \hat{p}  \\[0.1cm]
{\rho^{\ln}}\average{w}\hat{h}\end{bmatrix}.$}
\end{equation}
\end{itemize}
\end{rem}

\subsection{Kinetic energy preservation of the split forms}\label{sec:kep}

In \cite{jameson2008} Jameson derived a general condition on a numerical flux function for finite volume schemes to generate kinetic energy preserving discretisations. The kinetic energy is 
\begin{equation}
\kappa:=\frac{1}{2}\rho\,(u^2+v^2+w^2) = \frac{(\rho\,u)^2+(\rho\,v)^2+(\rho\,w)^2}{2\,\rho}.
\end{equation}
The kinetic energy balance can be directly derived from the compressible Euler equations and reads 
\begin{equation}
\label{eq:kep_balance_paper}
\resizebox{0.9\textwidth}{!}{$
\frac{\partial \kappa}{\partial t} + \left(\frac{1}{2}\rho\,u\,(u^2+v^2+w^2)\right)_x + \left(\frac{1}{2}\rho\,v\,(u^2+v^2+w^2)\right)_y+ \left(\frac{1}{2}\rho\,w\,(u^2+v^2+w^2)\right)_z +u\,p_x  +v\,p_y +w\,p_z=0.
$}
\end{equation}

Note, that the influence of the advective terms of the momentum flux $\rho\,u^2,\,\rho\,u\,v,\,\rho\,u\,w,...$ can be re-cast into flux form, whereas the pressure gradient in the momentum flux yields a non-conservative term (pressure work) in the kinetic energy balance \eqref{eq:kep_balance_paper}. It is exactly this structure of the kinetic energy balance that is reflected in the definition of kinetic energy preserving by Jameson: Ignoring boundary conditions, a discretisation is termed kinetic energy preserving when the (discrete) integral of the kinetic energy is not changed by the advective terms, but only by the pressure work.

For finite volume schemes, Jameson \cite{jameson2008} found the following general form for the components of the numerical surface fluxes of the momentum equations that give kinetic energy preservation:
\begin{alignat}{3}\label{eq:kep_property}
&F^{*,2} = F^{*,1}\,\average{u} + \widetilde{p},\quad &&F^{*,3} = F^{*,1}\,\average{v},\quad &&F^{*,4} = F^{*,1}\,\average{w},\nonumber\\
&G^{*,2} = G^{*,1}\,\average{u} ,\quad &&G^{*,3} = G^{*,1}\,\average{v}+\widetilde{p},\quad &&G^{*,4} = G^{*,1}\,\average{w},\\
&H^{*,2} = H^{*,1}\,\average{u} ,\quad &&H^{*,3} = H^{*,1}\,\average{v},\quad &&H^{*,4} = H^{*,1}\,\average{w}+\widetilde{p},\nonumber
\end{alignat}
where $\widetilde{p}$ can be any consistent numerical trace approximation of the pressure. This structure is enough to guarantee that the advective terms in the resulting discrete kinetic energy balance are consistent and in conservative form. \textred{We note that in the original conditions for kinetic energy preservation of Jameson, the pressure discretisation $\widetilde{p}$ can be any consistent approximation. However, we will show in the numerical results section that the discretisation of the pressure plays an important role for the discrete kinetic energy preservation. We will show numerically that the best results for discrete kinetic energy preservation are achieved when an arithmetic mean is used for $\widetilde{p}$.}\\
\\ 
If we inspect the numerical volume flux functions presented in Thm. \ref{Thm2}, the conditions \eqref{eq:kep_property} hold for the variants MO, KG, PI and CH. We will demonstrate that the conditions \eqref{eq:kep_property} are enough to guarantee discrete kinetic energy preservation for the flux difference formulation \eqref{eq:RHSfluxform}.

Gassner \cite{gassner_kepdg} showed that Morinishi's alternative formulation allows one to construct a kinetic energy preserving DG discretisation. Using the same approach, it is possible to prove that KG and PI are kinetic energy preserving as well. In contrast, the method of \eqref{eq:kep_property} cannot be applied to the CH formulation, as no explicit split form is known. However, it is possible to make use of the flux differencing form of Sec. \ref{sec:DGSEM_fluxform} to prove that numerical volume fluxes satisfying the conditions \eqref{eq:kep_property} generate high-order accurate schemes that are kinetic energy preserving in the sense of Jameson, i.e. that the advective terms can be re-cast into a conservative form \textred{and only the approximations of the pressure work is in non-conservative form}. We summarise the third main result of this work in the following theorem. 

\begin{thm}[Kinetic energy preservation]\label{Thm3} If the numerical volume flux functions $F^\#$, $G^\#$, and $H^\#$ satisfy the condition \eqref{eq:kep_property}, the volume terms of the resulting flux differencing discretisation \eqref{eq:RHSfluxform} are kinetic energy preserving in the sense of Jameson \cite{jameson2008}, \textred{i.e. that the advective terms can be re-cast into a conservative form and only the approximations of the pressure work is in non-conservative form}. More precisely, the volume terms of the discrete kinetic energy balance are given by
\begin{equation}\label{eq:kin_pres}
\begin{split}
\left(\frac{\partial \kappa}{\partial t}\right)_{ijk}\approx &-2\,\sum\limits_{m=0}^N D_{im}\,f^{\#,\kappa}(U_{ijk},U_{mjk})+D_{jm}\,g^{\#,\kappa}(U_{ijk},U_{imk})  
+D_{km}\,h^{\#,\kappa}(U_{ijk},U_{ijm})\\
&-2\,\sum\limits_{m=0}^N u_{ijk}\,D_{im}\,\widetilde{p}(U_{ijk},U_{mjk})+v_{ijk}\,D_{jm}\,\widetilde{p}(U_{ijk},U_{imk})+w_{ijk}\,D_{km}\,\widetilde{p}(U_{ijk},U_{ijm}),
\end{split}
\end{equation}
with a consistent and symmetric pressure discretisation $\widetilde{p}$ and with the consistent and symmetric advective fluxes of the discrete kinetic energy 
\begin{equation}
\begin{split}
f^{\#,\kappa}(U_{ijk},U_{mjk}) &:=\frac{1}{2}\,F^{\#,1}(U_{ijk},U_{mjk})\left(u_{ijk}\,u_{mjk}+v_{ijk}\,v_{mjk}+w_{ijk}\,w_{mjk}\right),\\
g^{\#,\kappa}(U_{ijk},U_{imk}) &:=\frac{1}{2}\,G^{\#,1}(U_{ijk},U_{imk})\left(u_{ijk}\,u_{imk}+v_{ijk}\,v_{imk}+w_{ijk}\,w_{imk}\right),\\
h^{\#,\kappa}(U_{ijk},U_{ijm}) &:=\frac{1}{2}\,H^{\#,1}(U_{ijk},U_{ijm})\left(u_{ijk}\,u_{ijm}+v_{ijk}\,v_{ijm}+w_{ijk}\,w_{ijm}\right).
\end{split}
\end{equation}
\end{thm}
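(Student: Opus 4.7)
The plan is to start from the pointwise identity
$$\kappa_t = u\,(\rho u)_t + v\,(\rho v)_t + w\,(\rho w)_t - \tfrac{1}{2}(u^2+v^2+w^2)\,\rho_t,$$
applied at each GL node $(i,j,k)$, and to substitute the four semi-discrete equations of \eqref{eq:RHSfluxform} for the continuity and three momentum components. Because the three spatial directions play identical roles, I would track only the $x$-direction contribution and invoke symmetry for $y$ and $z$. Writing $F^{\#,2}, F^{\#,3}, F^{\#,4}$ in terms of $F^{\#,1}$ via the KEP conditions \eqref{eq:kep_property}, the combined $x$-direction momentum contribution to $\kappa_t$ becomes
$$-2\sum_{m=0}^N D_{im}\Bigl\{F^{\#,1}\bigl[u_{ijk}\average{u}_{im} + v_{ijk}\average{v}_{im} + w_{ijk}\average{w}_{im}\bigr] + u_{ijk}\,\widetilde{p}(U_{ijk},U_{mjk})\Bigr\},$$
while the $-\tfrac{1}{2}(u^2+v^2+w^2)\rho_t$ piece contributes $+(u_{ijk}^2+v_{ijk}^2+w_{ijk}^2)\sum_m D_{im}F^{\#,1}$ from the $x$-direction of the mass equation.

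The key algebraic step is then to expand $u_{ijk}\average{u}_{im} = \tfrac{1}{2}u_{ijk}^2 + \tfrac{1}{2}u_{ijk}u_{mjk}$ (and analogously for $v$ and $w$). This splits the bracketed quantity into an \emph{on-diagonal} piece $\tfrac{1}{2}(u_{ijk}^2+v_{ijk}^2+w_{ijk}^2)$ multiplied by $F^{\#,1}$, whose contribution $-\sum_m D_{im}F^{\#,1}(u_{ijk}^2+v_{ijk}^2+w_{ijk}^2)$ cancels exactly against the mass-equation term, and an \emph{off-diagonal} piece $\tfrac{1}{2}(u_{ijk}u_{mjk}+v_{ijk}v_{mjk}+w_{ijk}w_{mjk})$, which multiplied by $F^{\#,1}$ reproduces $f^{\#,\kappa}(U_{ijk},U_{mjk})$ exactly by definition. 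The $x$-direction contribution therefore collapses to
$$-2\sum_{m=0}^N D_{im}\,f^{\#,\kappa}(U_{ijk},U_{mjk}) - 2\,u_{ijk}\sum_{m=0}^N D_{im}\,\widetilde{p}(U_{ijk},U_{mjk}),$$
and adding the analogous $y$- and $z$-direction contributions yields exactly the claimed identity \eqref{eq:kin_pres}.

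To close the proof, I would verify that $f^{\#,\kappa}, g^{\#,\kappa}, h^{\#,\kappa}$ are symmetric and consistent: symmetry in $(U_{ijk},U_{mjk})$ follows because $F^{\#,1}$ is symmetric in its arguments and the factor $u_{ijk}u_{mjk}+v_{ijk}v_{mjk}+w_{ijk}w_{mjk}$ is manifestly symmetric in those two states, while setting $U_{mjk}=U_{ijk}$ reduces $f^{\#,\kappa}$ to $\tfrac{1}{2}\rho u(u^2+v^2+w^2)$, which is the exact continuous kinetic-energy flux from \eqref{eq:kep_balance_paper}.

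The hard part will be purely algebraic bookkeeping, namely keeping the index pairs $im/jm/km$ and the direction-specific flux components straight while combining the three spatial contributions and the four semi-discrete equations. Once the on-diagonal quadratic term is recognised as the obstruction that is precisely removed by the mass-equation piece, no SBP-specific estimate, no quadrature assumption, and no property of the pressure surrogate $\widetilde p$ beyond consistency and symmetry is required. In particular, the argument uses \eqref{eq:kep_property} purely as an algebraic factorisation of the momentum numerical flux through $F^{\#,1}$ and never exploits any entropy or energy inequality, which is why it applies uniformly to the MO, KG, PI and CH variants identified after Theorem \ref{Thm2}.
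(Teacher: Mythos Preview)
Your proposal is correct and follows essentially the same route as the paper's proof: starting from the chain-rule identity $\kappa_t = u(\rho u)_t + v(\rho v)_t + w(\rho w)_t - \tfrac{1}{2}(u^2+v^2+w^2)\rho_t$, inserting the flux-differencing volume terms with the KEP structure \eqref{eq:kep_property}, expanding $u_{ijk}\average{u}_{im}$ into its on-diagonal and off-diagonal halves, cancelling the on-diagonal part against the continuity contribution, and identifying the remainder as $f^{\#,\kappa}$. Your explicit verification of symmetry and consistency of $f^{\#,\kappa}, g^{\#,\kappa}, h^{\#,\kappa}$ mirrors the paper's closing remarks in the proof.
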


\begin{proof}
Again, we move the proof to the \ref{Thm3Proof}.
\end{proof}

\begin{rem}
Note, that the terms of the discrete kinetic energy balance \eqref{eq:kin_pres} are consistent to the continuous kinetic energy balance \eqref{eq:kep_balance_paper} \textred{and that the advective terms are in conservative form and the pressure work is in non-conservative form. This property guarantees that the discrete total kinetic energy is not dissipated by the advective terms. Hence, this property is important to construct discretisations that have low artificial dissipation of kinetic energy.} 
\end{rem}

\begin{rem}
The kinetic energy preserving flux function proposed by Jameson is identical to the $PI$ flux \eqref{eq:numflux_priozzli}.
\end{rem}

\begin{cor}
In Gassner \cite{gassner_kepdg} it was shown that when the volume terms of the DGSEM are kinetic energy preserving and the numerical surface flux function at element interfaces satisfies the general conditions \eqref{eq:kep_property} then the multi-element discretisation is kinetic energy preserving as well. Thus, if the numerical volume and surface flux are the same it follows that the DGSEM approximation satisfies the kinetic energy preservation condition \eqref{eq:kep_property}. In particular, the variants MO, KG, PI and CH are kinetic energy preserving in the sense that the (discrete) integral of the kinetic energy is only changed by the pressure work (ignoring boundary conditions). 
\end{cor}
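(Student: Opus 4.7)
The plan is to combine three ingredients: a direct inspection of the numerical volume fluxes $F^\#, G^\#, H^\#$ listed in Theorem \ref{Thm2} to verify the Jameson structural condition \eqref{eq:kep_property} for each of the four variants; the single-element volume kinetic-energy identity \eqref{eq:kin_pres} established in Theorem \ref{Thm3}; and the multi-element coupling argument of \cite{gassner_kepdg}, which shows that \eqref{eq:kep_property} imposed on the surface numerical flux propagates KEP across element interfaces. No new estimates are required; the corollary is a bookkeeping consequence of previously assembled pieces.

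First I would check \eqref{eq:kep_property} directly from the flux entries given in Theorem \ref{Thm2}. For the KG and PI variants, the mass component is $F^{\#,1}=\average{\rho}\average{u}$ and the three momentum components read $\average{\rho}\average{u}^2+\average{p}$, $\average{\rho}\average{u}\average{v}$, $\average{\rho}\average{u}\average{w}$, which match $F^{\#,1}\average{u}+\widetilde{p}$, $F^{\#,1}\average{v}$, $F^{\#,1}\average{w}$ with $\widetilde{p}=\average{p}$. The CH case is identical after replacing $\average{\rho}$ by $\rho^{\ln}$ and $\average{p}$ by $\hat{p}$. For the MO flux we have $F^{\#,1}=\average{\rho u}$ and momentum entries $\average{\rho u}\average{u}+\average{p}$, $\average{\rho u}\average{v}$, $\average{\rho u}\average{w}$, again fitting the structural condition with $\widetilde{p}=\average{p}$. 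The $y$- and $z$-direction fluxes satisfy the analogous identities by symmetry.

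Next I would apply Theorem \ref{Thm3} elementwise to reduce the nodal kinetic-energy balance to the form \eqref{eq:kin_pres}, in which the advective contributions appear as flux differences of the consistent and symmetric two-point kinetic-energy fluxes $f^{\#,\kappa}, g^{\#,\kappa}, h^{\#,\kappa}$, and the only non-conservative term is the discrete pressure work built from $\widetilde{p}$. Weighting \eqref{eq:kin_pres} by $\omega_i\omega_j\omega_k J$ and summing over the GL nodes inside one element telescopes the advective flux differences to the element faces, leaving boundary values of $f^{\#,\kappa}, g^{\#,\kappa}, h^{\#,\kappa}$ together with the surface numerical flux contributions produced by the interface terms of \eqref{eq:RHSfluxform}.

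Finally I would invoke the multi-element argument from \cite{gassner_kepdg}: because the surface numerical flux has been assumed to coincide with the volume flux and therefore also satisfies \eqref{eq:kep_property}, the interface contributions to the kinetic-energy balance from two adjacent elements produce matching two-point kinetic-energy fluxes with opposite outward normal orientations, so their advective parts telescope to zero across every interior face. Only the pressure-work terms survive on interior faces, and a global sum over all elements therefore leaves the (discrete) pressure work plus physical boundary data, which is precisely the KEP claim. The main obstacle is not analytical but organisational: one must track that the same consistent pressure $\widetilde{p}$ is used in the volume and in the surface flux, which is automatic once volume and surface fluxes are identical, so the variants MO, KG, PI and CH inherit multi-element kinetic energy preservation with no additional work.
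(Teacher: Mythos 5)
Your proposal is correct and follows essentially the same route the paper intends: the corollary is stated without a separate proof precisely because it is the combination of (i) inspection of the Theorem~\ref{Thm2} flux tables to confirm the structural condition \eqref{eq:kep_property} for MO, KG, PI and CH, (ii) Theorem~\ref{Thm3} for the volume terms, and (iii) the multi-element interface argument cited from \cite{gassner_kepdg}. Your additional bookkeeping (quadrature-weighted summation, telescoping of the flux differences to the element faces, cancellation of the advective parts across interior faces when surface and volume fluxes coincide) is a faithful and correct expansion of that intended argument.
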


\subsection{Numerical surface flux functions}\label{sec:numflux}

As mentioned above, we connect the choice of the volume numerical flux to the choice of the surface numerical flux. As a blueprint, we use the local Lax-Friedrichs numerical flux function, e.g. 
\begin{equation}
F^*(U_-,U_+) := \frac{1}{2}\left(F(U_-) + F(U_+)\right) - \frac{1}{2}\,\lambda_{max}\,\left[U_+ - U_-\right],
\end{equation}
where $\lambda_{max}$ is an estimate of the fastest wave speed at the interface. Note that the structure of the local Lax-Friedrichs flux (LLF) is a consistent symmetric part $\frac{1}{2}\left(F(U_-) + F(U_+)\right)$ and a stabilisation term $- \frac{1}{2}\,\lambda_{max}\,\left[U_+ - U_-\right]$, which introduces dissipation that depends on the jump of the approximation at the interface.  

Analogously, we replace the consistent symmetric part by the consistent symmetric numerical volume flux function $F^\#$, which is also used to generate the volume terms of the DGSEM. We add a stabilisation term $- Stab(U_-,U_+)$, that we design so that it introduces dissipation at the interface depending on the jump of the approximate solution
\begin{equation}
\label{eq:numericalflux}
F^*(U_-,U_+) := F^\#(U_-,U_+) - Stab(U_-,U_+).
\end{equation}
For the variants $MO$, $DU$, $KG$, and $PI$ we choose the same stabilisation term as in the local Lax Friedrichs flux
\begin{equation}
\label{eq:llf}
Stab_{MO}(U_-,U_+)=Stab_{DU}(U_-,U_+)=Stab_{KG}(U_-,U_+)=Stab_{PI}(U_-,U_+):=\frac{1}{2}\,\lambda_{max}\,\left[U_+ - U_-\right],
\end{equation} 
where $\lambda_{max}$ is the maximum of the maximum wave speeds from the left and right at the interface. The stabilisation term for $CH$ is a direct 3D extension of the term introduced in \cite{chandrashekar2013} and is identical to \eqref{eq:llf} for the first four components, but differs in the fifth component (energy equation):
\begin{equation}
\resizebox{0.9\hsize}{!}{$\begin{split}
Stab^5_{CH}&(U_-,U_+):=\frac{1}{2}\lambda_{max}\Bigg\{ \left[\frac{1}{2(\gamma -1)\beta^{ln}} + (u^+u^-+v^+v^-+w^+w^-) \right]\,[\rho^+-\rho^-]\\ 
&+ \average{\rho}\average{u}[u^+-u^-] + \average{\rho}\average{v}[v^+-v^-]+ \average{\rho}\average{w}[w^+-w^-]+\frac{\average{\rho}}{2(\gamma-1)}\left[\frac{1}{\beta^+} - \frac{1}{\beta^-} \right]\Bigg\},
\end{split}$}
\end{equation} 
where $\beta$ is introduced in \eqref{eq:beta}. 

Note that the standard LLF type dissipation terms in the first four components ensures guaranteed kinetic energy dissipation \cite{chandrashekar2013}, whereas the specific fifth component of $CH$ stabilisation term additionally ensures guaranteed entropy dissipation \cite{chandrashekar2013}. For the $IR$ stabilisation, we use the term introduced in \cite{carpenter_esdg}
\begin{equation}
Stab_{IR}(U_-,U_+) = \frac{1}{2}\lambda_{max}\,\mathcal{H}\,[V^+ - V^-],
\end{equation}
where $V = [\frac{\gamma - s}{\gamma-1}-\frac{\rho\,\|u\|^2}{2\,p}, \frac{\rho\,u}{p}, \frac{\rho\,v}{p}, \frac{\rho\,w}{p}, -\frac{\rho}{p} ]^T$ are the entropy variables and $s=\ln(p) - \gamma\,\ln(\rho)$ is the specific thermodynamic entropy. The matrix $\mathcal{H}$ is the symmetric positive definite entropy Jacobian of the conservative variables
\begin{equation}
\mathcal{H}:=\frac{\partial U}{\partial V}  
= \begin{bmatrix}
\rho & \rho u & \rho v & \rho w & \rho e \\[0.1cm]
\rho u & \rho u^2 + p & \rho u v & \rho u w & \rho {h} u \\[0.1cm]
\rho v & \rho u v & \rho v^2 + p& \rho v w & \rho {h} v \\[0.1cm]
\rho w & \rho u w & \rho v w & \rho w^2 + p & \rho {h} w \\[0.1cm]
\rho e  & \rho{h} u & \rho{h} v & \rho{h} w & \rho{h}^2 - \frac{a^2 p }{\gamma-1}\\[0.1cm]
\end{bmatrix}, 
\end{equation}
where the sound speed is defined as $a^2 = \frac{\gamma\,p}{\rho}$. The maximum wave speed $\lambda_{max}$ and the entries of the entropy Jacobian $\mathcal{H}$ are evaluated at the average states used to compute the $IR$ numerical flux functions \eqref{eq:IRflux}. Again, this stabilisation term is designed so that the discrete entropy is dissipated at element interfaces. Thus, the variants IR and CH with stabilisation terms are entropy stable.

\section{Numerical Investigations}\label{sec:numerical results}

The main purpose of this section is to investigate the \textblue{robustness} of the new split form DGSEM with respect to different choices of grid resolution and/or polynomial degree. We compare the robustness and behaviour of the standard DGSEM, the variant DU, the new split form variants with kinetic energy preserving volume terms MO, KG and PI, the variant with entropy conserving volume terms IR (Carpenter et al. \cite{carpenter_esdg}) and the variant with entropy conserving and kinetic energy preserving volume terms CH. 

We start however with an investigation of the experimental order of convergence, discrete entropy conservation and kinetic energy preservation. All simulation results are obtained with an explicit five stage fourth order accurate low storage Runge-Kutta scheme, where the time step is computed according to a CFL type condition with the local maximum wave speed and the relative grid size $\Delta x/(N+1)$, where $\Delta x$ is the element size. If not specified otherwise, we use $CFL=0.5$ for all computations. The adiabatic coefficient is chosen as $\gamma=1.4$.

\subsection{$h$- and $p$-convergence}

We first investigate the $h$- and $p$-convergence properties of the schemes. We choose the following manufactured solution for the unsteady compressible Euler equations
\begin{equation}
\begin{split}
\rho &= 2 + \frac{1}{10}\,\sin(\pi\,(x+y+z - 2\,t)),\\
u&=1,\\
v&=1,\\
w&=1,\\
\rho\,e &= \left(2 + \frac{1}{10}\,\sin(\pi\,(x+y+z - 2\,t))\right)^2, 
\end{split}
\end{equation}
with the corresponding source term 
\begin{equation}
\begin{split}
q_{\rho} &= c_1\,\cos(\pi\,(x+y+z - 2\,t))\\
q_{\rho\,u} &= c_2\,\cos(\pi\,(x+y+z - 2\,t))+c_3\,\cos(2\,\pi\,(x+y+z - 2\,t)),\\
q_{\rho\,v} &= c_2\,\cos(\pi\,(x+y+z - 2\,t))+c_3\,\cos(2\,\pi\,(x+y+z - 2\,t)),\\
q_{\rho\,w} &= c_2\,\cos(\pi\,(x+y+z - 2\,t))+c_3\,\cos(2\,\pi\,(x+y+z - 2\,t)),\\
q_{\rho\,e} &= c_4\,\cos(\pi\,(x+y+z - 2\,t))+c_5\,\cos(2\,\pi\,(x+y+z - 2\,t)),\\
\end{split}
\end{equation}
where $c_1=\frac{\pi}{10} $, $c_2=-\frac{1}{5}\,\pi+\frac{1}{20}\,\pi\,(1+5\,\gamma)$, $c_3 =\frac{\pi}{100}\,(\gamma -1)$, $c_4 = \frac{1}{20}\left(-16\,\pi + \pi\,(9+15\,\gamma)\right)$, and $c_5 = \frac{1}{100}\,\left(3\,\pi\,\gamma-2\,\pi\right)$. The end time is set to $t_{end}=10$. The computational domain is a fully periodic box with the size $[-1,1]^3$.\\
\\ 
As stated above, derivations in this work are valid for all diagonal norm SBP operators, such as those from the finite difference community. A conceptional difference of a spectral element method to a finite difference approximation is the ability for $p$-convergence. In $p$-convergence studies the grid is fixed and the number of the nodes inside an element is increased. In a DGSEM, increasing the number of element nodes automatically increases the polynomial degree of the approximation and hence increases the approximation order in tandem with the resolution. The resulting \textit{spectral} convergence can be observed in Fig. \ref{fig:p_conv}, where all schemes are investigated with and without interface stabilisation on a regular $4^3$ grid.  As an example, the $L_2$-errors in density are shown. All other quantities show a similar behaviour. The $L_2$-norm is computed discretely with the collocated GL quadrature used for the scheme. 

For this test, the errors of the different schemes are remarkably close, especially when interface stabilisation is activated. Note that without interface stabilisation, the numerical surface flux is symmetric in its arguments. This causes an odd/even effect, which can be clearly observed in the left plot of Fig. \ref{fig:p_conv} and is in accordance to similar observations \cite{gassner_skew_burgers,gassner_kepdg,gassner2015}. The simulations for higher polynomial degree $N$ plateau out at about $10^{-8}$ in the right plot due to the accuracy of the time integration scheme.  By decreasing the CFL from $0.5$ to $0.25$, we can see in the right plot of Fig. \ref{fig:p_conv} that the level of the plateau decreases exactly by a factor of $16$ as expected for the fourth order accurate  RK method. 
\begin{figure}[!htbp]
\centering
\includegraphics[trim=0 0 0 0,clip,width=0.45\textwidth]{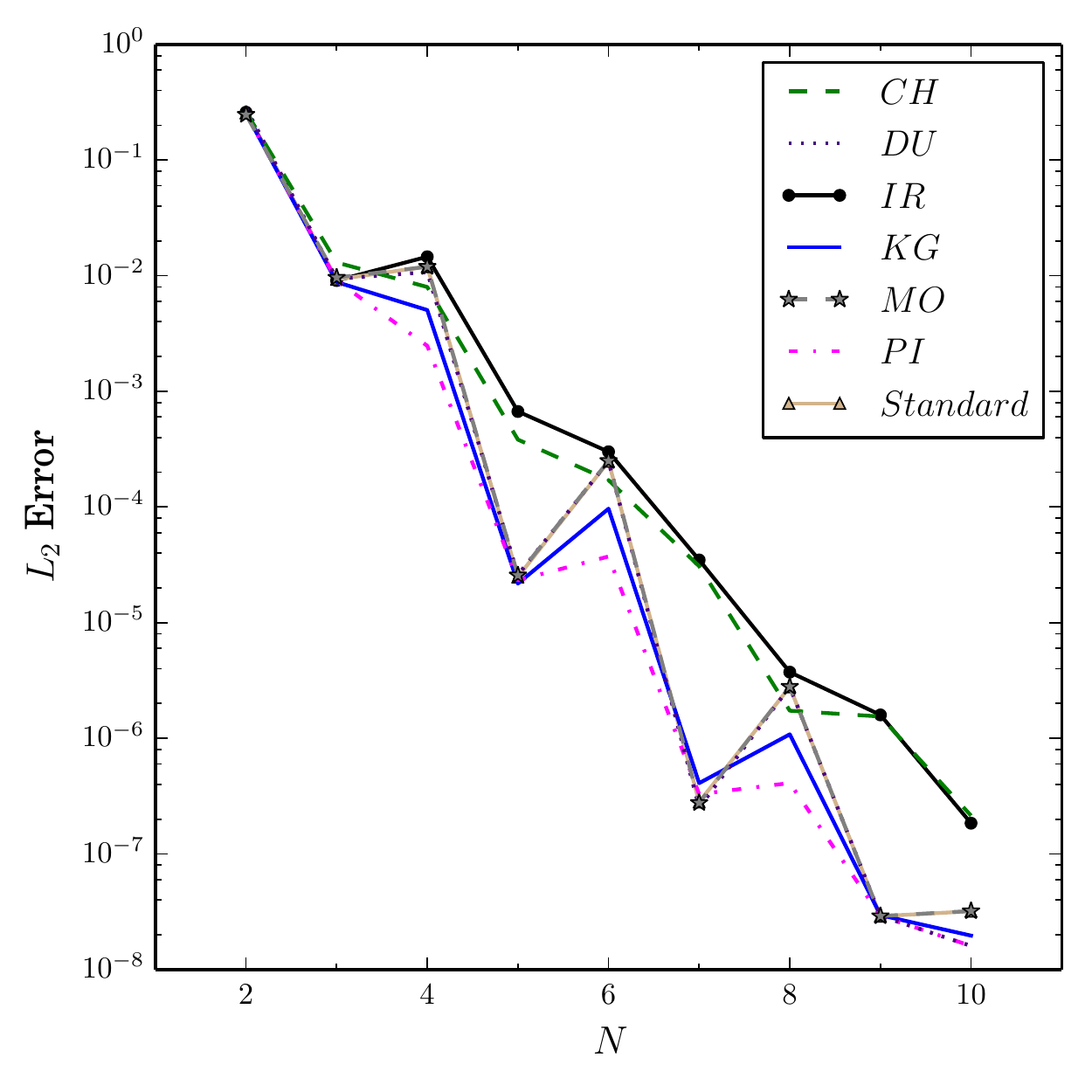}\hspace*{2mm}\includegraphics[trim=0 0 0 0,clip,width=0.45\textwidth]{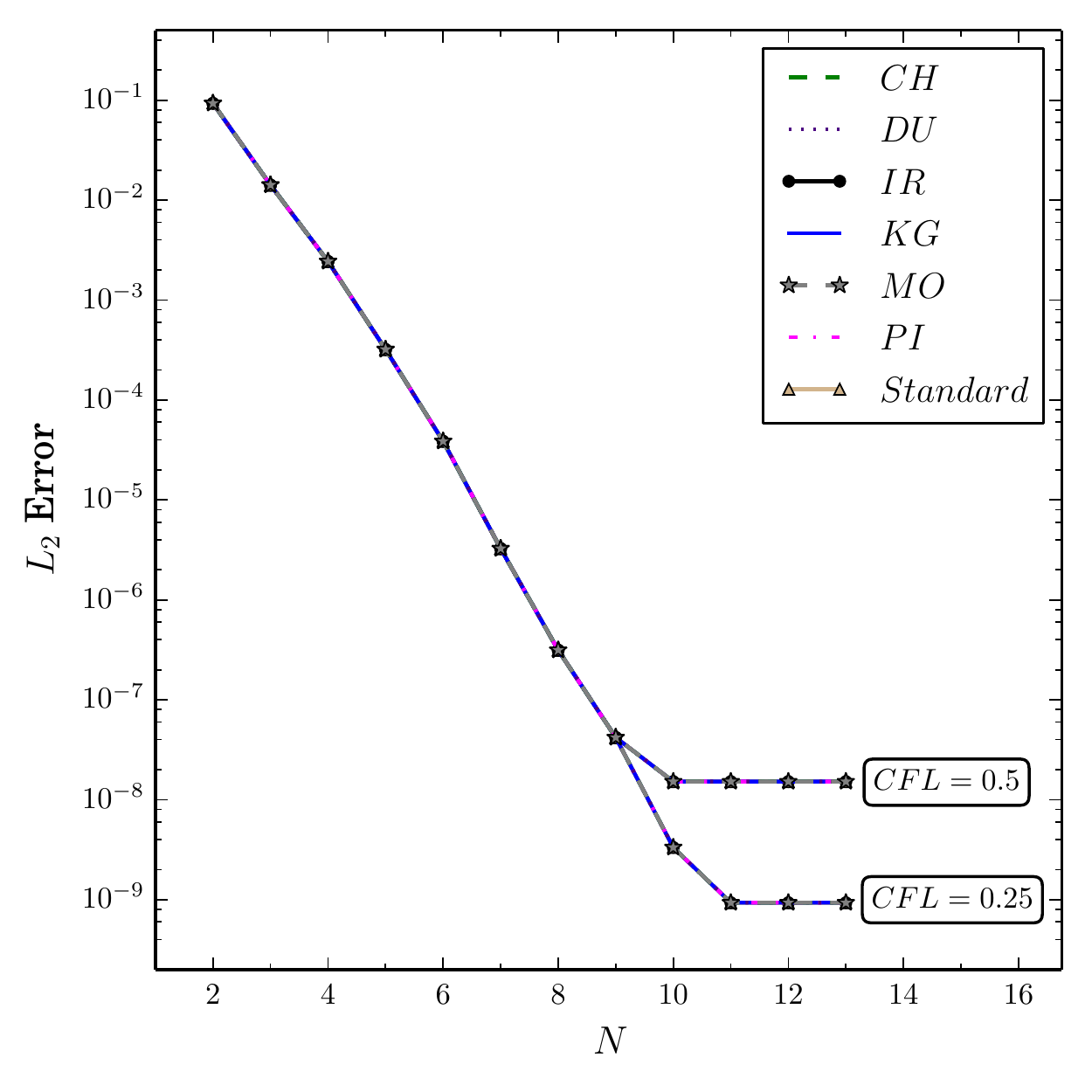}
\caption{\label{fig:p_conv} $p$-convergence for all the schemes on a regular $4^3$ grid. Plot of the $L_2$-error in density is shown. Left: without interface stabilisation. Right: with interface stabilisation.}
\end{figure}

The next figure, Fig. \ref{fig:h_conv_nostab}, shows the $h$-convergence behaviour for the different split form DGSEM schemes without stabilisation for polynomial degree $N=3$ and $N=4$. Without interface stabilisation terms, we again obtain the odd/even behaviour: for odd polynomial degrees $N$ we do not get the optimal convergence rate $N+1$, but only a convergence rate of $N$. 
\begin{figure}[!htbp]
\centering
\includegraphics[trim=0 0 0 0,clip,width=0.45\textwidth]{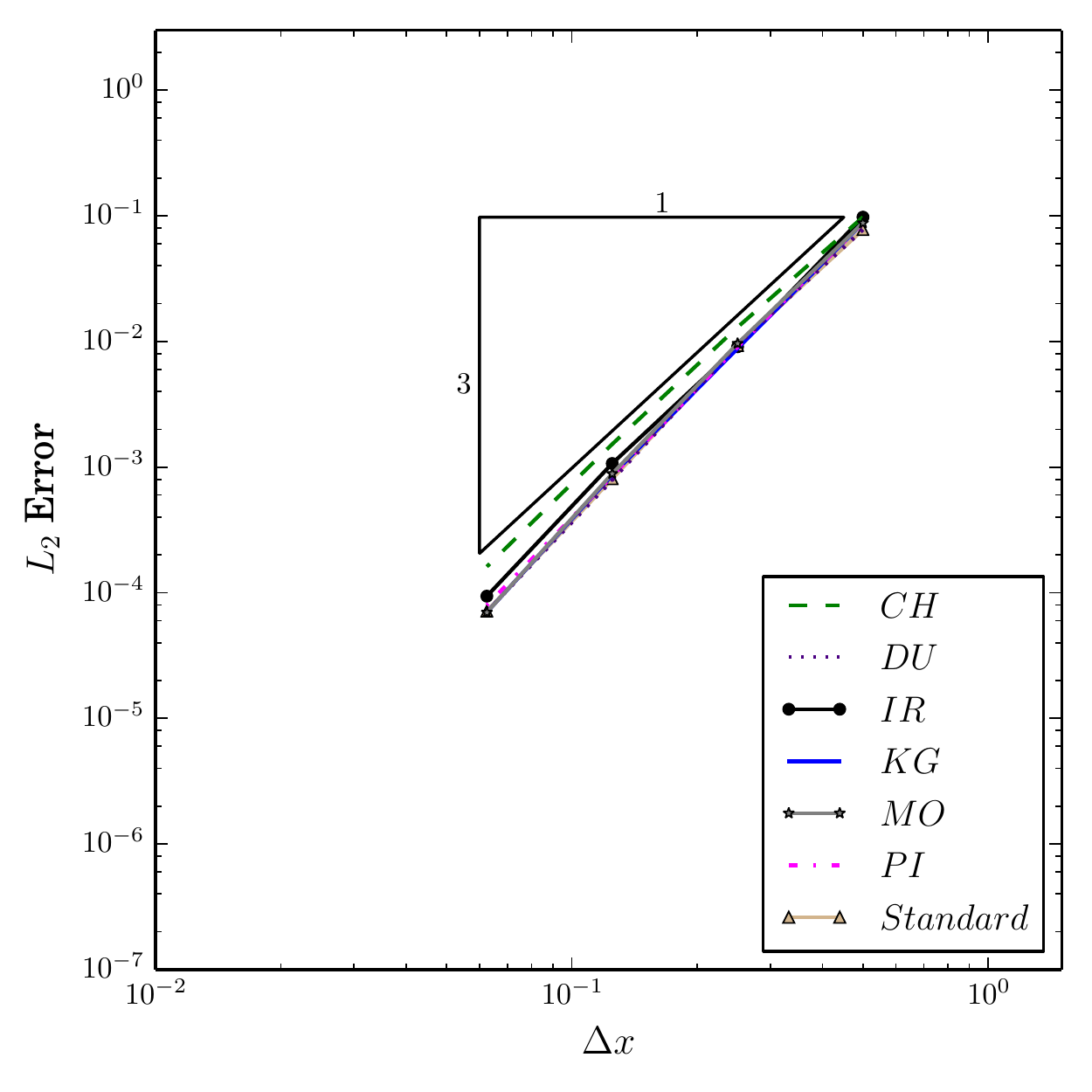}\hspace*{2mm}\includegraphics[trim=0 0 0 0,clip,width=0.45\textwidth]{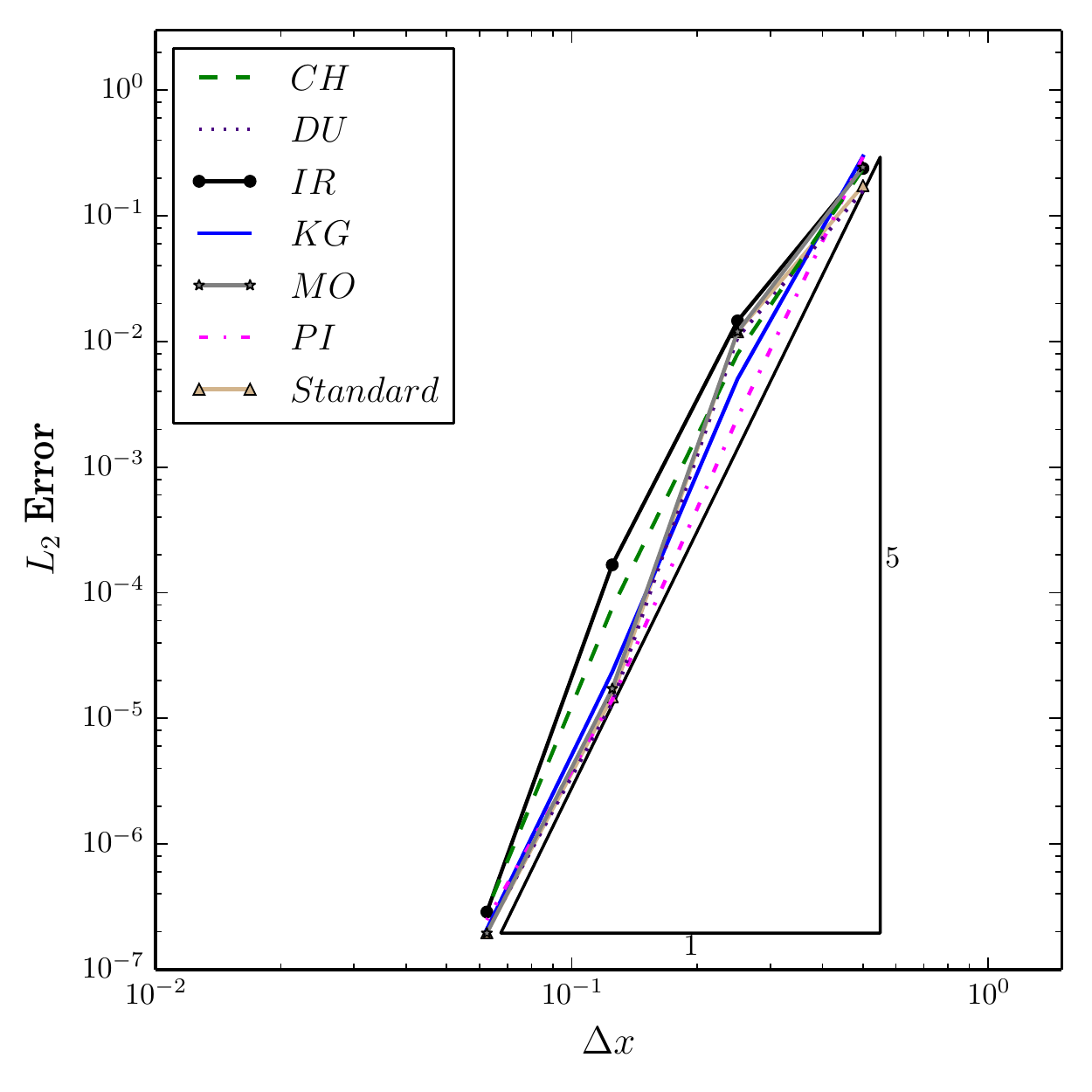}
\caption{\label{fig:h_conv_nostab} $h$-convergence for all the schemes without interface stabilisation. Grid sequence goes from $2^3$ up to $16^3$. Plot of the $L_2$-error in density is shown. The odd/even effect can be seen. Left: $N=3$, experimental order of convergence $\approx 3$. Right: $N=4$, experimental order of convergence $\approx 5$.}
\end{figure}

If we switch on the interface stabilisation terms, again, the errors produced by the different fluxes are almost the same. All the schemes show now the optimal order of convergence, i.e. for a polynomial of degree $N$, we get $N+1$st order convergence in $h$ as can be seen in Fig. \ref{fig:h_conv_stab}.  
\begin{figure}[!htbp]
\centering
\includegraphics[trim=0 0 0 0,clip,width=0.45\textwidth]{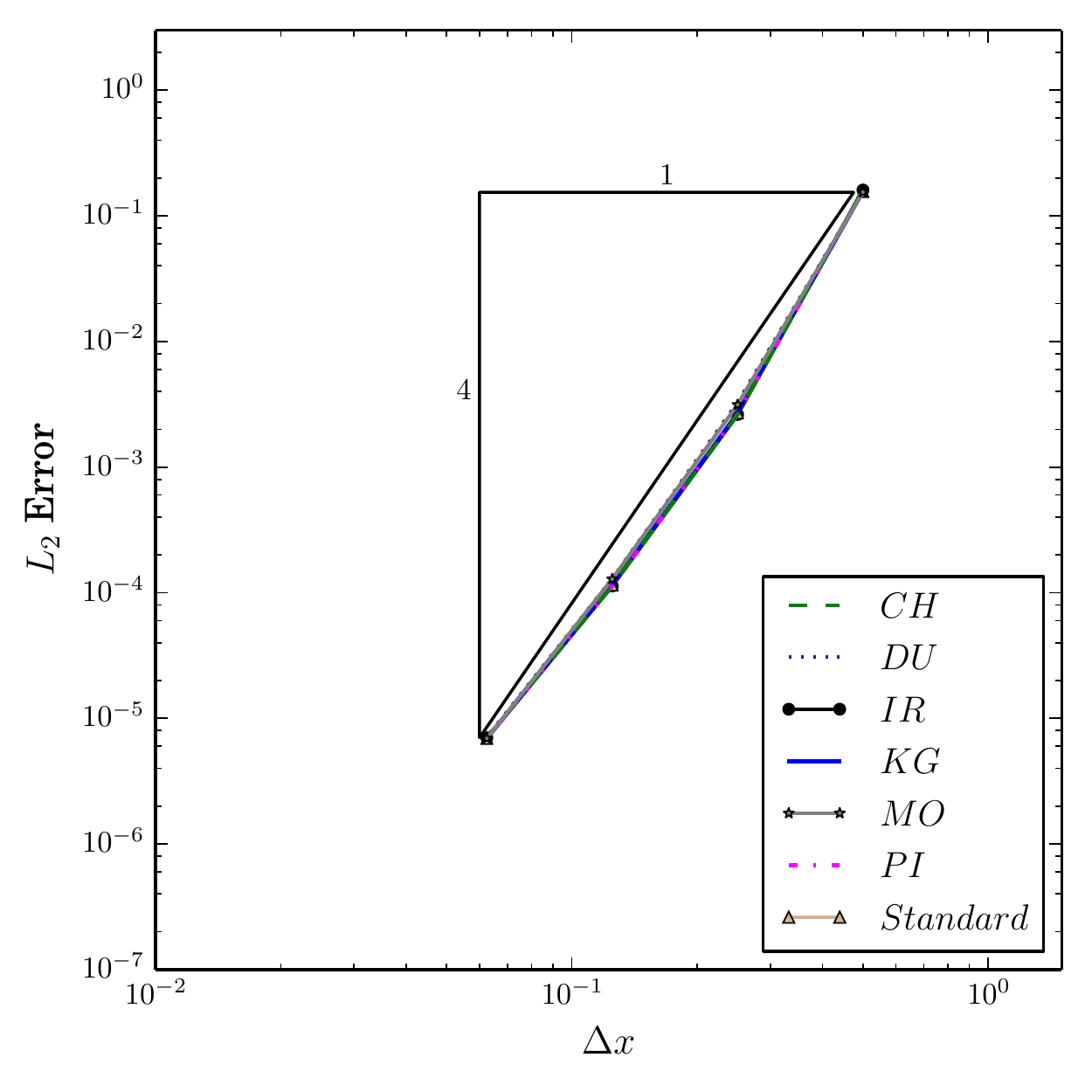}\hspace*{2mm}\includegraphics[trim=0 0 0 0,clip,width=0.45\textwidth]{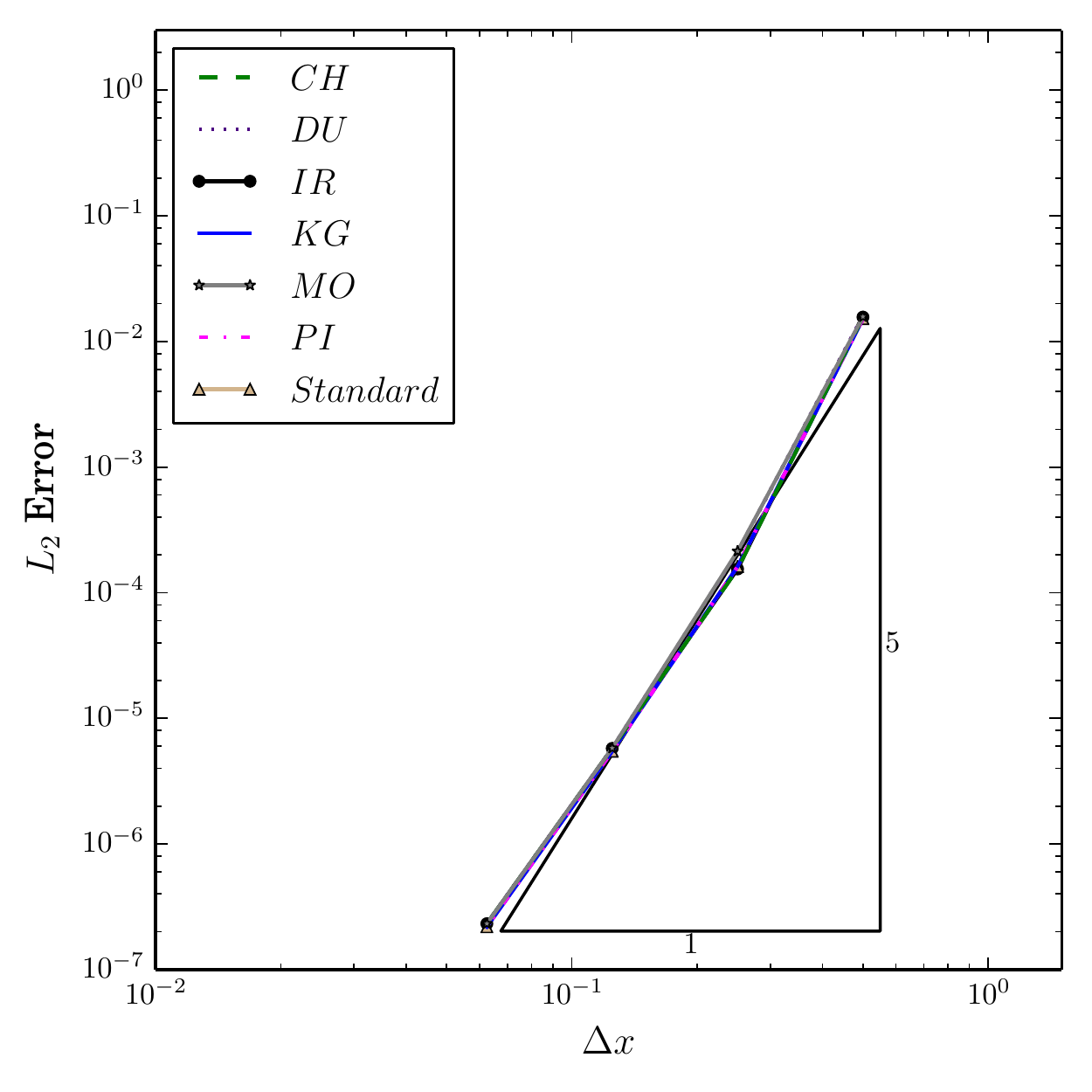}\caption{\label{fig:h_conv_stab} $h$-convergence for the all schemes with interface stabilisation. Grid sequence goes from $2^3$ up to $16^3$. Plot of the $L_2$-error in density is shown. For polynomial degree $N$, we get $N+1$ convergence in $h$. Left: $N=3$. Right: $N=4$.}
\end{figure}

In summary, all schemes show the expected $p$- and $h$-convergence behaviour. 

\subsection{Entropy conservation and kinetic energy preservation}

All presented schemes conserve mass, momentum, and total energy by construction. In this section, we focus on the auxiliary conservation properties of the methods and investigate the entropy \textred{conservation} and kinetic energy \textred{preservation} of the schemes. To investigate the auxiliary conservation properties, we deactivate the numerical dissipation introduced by the surface stabilisation terms and only use the symmetric and consistent parts at element interfaces. 

In contrast to the last section, it does not make sense to investigate the auxiliary conservation properties with well resolved smooth solutions, as in this case all methods would converge spectrally fast to them, if the solution supports it. To make this investigation challenging, we consider the inviscid Taylor-Green vortex. The initial condition in the periodic $[0,2\,\pi]^3$ box is 
\begin{equation}
\begin{split}
\rho &= 1,\\
u &= \sin(x)\,\cos(y)\,\cos(z),\\
v &= -\cos(x)\,\sin(y)\,\cos(z),\\
w &= 0,\\
p &= \frac{100}{\gamma} + \frac{1}{16}\,\left(\cos(2\,x)\,\cos(2\,z) + 2\,\cos(2\,y)+2\,\cos(2\,x)+\cos(2\,y)\,\cos(2\,z)\right).
\end{split}
\end{equation}
The evolution of these simple initial conditions is quite challenging due to the non-linear interaction of scales as well as the transition to a turbulence like flow field that occurs for large enough times. Without physical viscosity, there is no small scale limit and thus approximations of the inviscid Taylor-Green vortex are always under-resolved for large enough times. This behaviour makes this test case a challenge for the robustness of high-order methods. 

Before we investigate the robustness of the schemes, we use this test setup to investigate their auxiliary \textred{conservation/preservation properties}. The inviscid Taylor-Green vortex solution conserves both the total kinetic energy and the total entropy for all times. \textred{Although total kinetic energy is in general not a conserved quantity, it is for the specific inviscid Taylor-Green vortex setup in fully periodic domains. Due to periodicity, the pressure work when integrated over the domain cancels out and does not change the total kinetic energy. This allows us to experimentally investigate the actual kinetic energy behaviour of the different schemes. Note, that the kinetic energy preservation property of Jameson is only a statement for the influence of the advective terms on the total kinetic energy. However, we will show that discretisation errors in the pressure and velocity naturally cause errors in the pressure work contribution to the discrete total kinetic energy as well, i.e. the discrete kinetic energy is not conserved discretely to machine precision for this case. Our investigations clearly show that the discretisation of the pressure matters and that the simple arithmetic mean yields the best results.} 

As mentioned above, these investigations only make sense when the interface stabilisation is omitted. Otherwise, the numerical viscosity would dissipate kinetic energy and entropy when the approximation is under-resolved. However, without interface stabilisation the resulting discretisations are basically dissipation free and the robustness is fragile. A general observation is that the higher the polynomial degree and the higher the overall number of spatial degrees of freedom, the harder it is to successfully run the simulations until the final time. Without interface stabilisation, the highest polynomial degree we could choose and still get meaningful results is $N=3$. For $N=3$ it is interesting to note that all proposed split form schemes run for all tested resolutions (up to $256^3$), except for the standard and MO variants, which crash almost immediately. For higher polynomial degrees, the general trend is that low resolutions with $16^3$ or $32^3$ might run, but all schemes crash for resolutions greater than or equal to $64^3$. 

Nevertheless, the choice of polynomial degree $N=3$ and $16^3$ grid cells ($64^3$ degrees of freedom) allows us to investigate the auxiliary conservation properties. The left part of Fig. \ref{fig:cons} shows the time evolution of the total entropy density
\begin{equation}
S = -\frac{\rho s}{\gamma-1},\quad s = \ln(p) - \gamma\ln(\rho),
\end{equation}
for the different schemes. The right part shows the temporal evolution of the total kinetic energy. 
\begin{figure}[!htbp]
\centering
\includegraphics[trim=0 0 0 0,clip,width=0.45\textwidth]{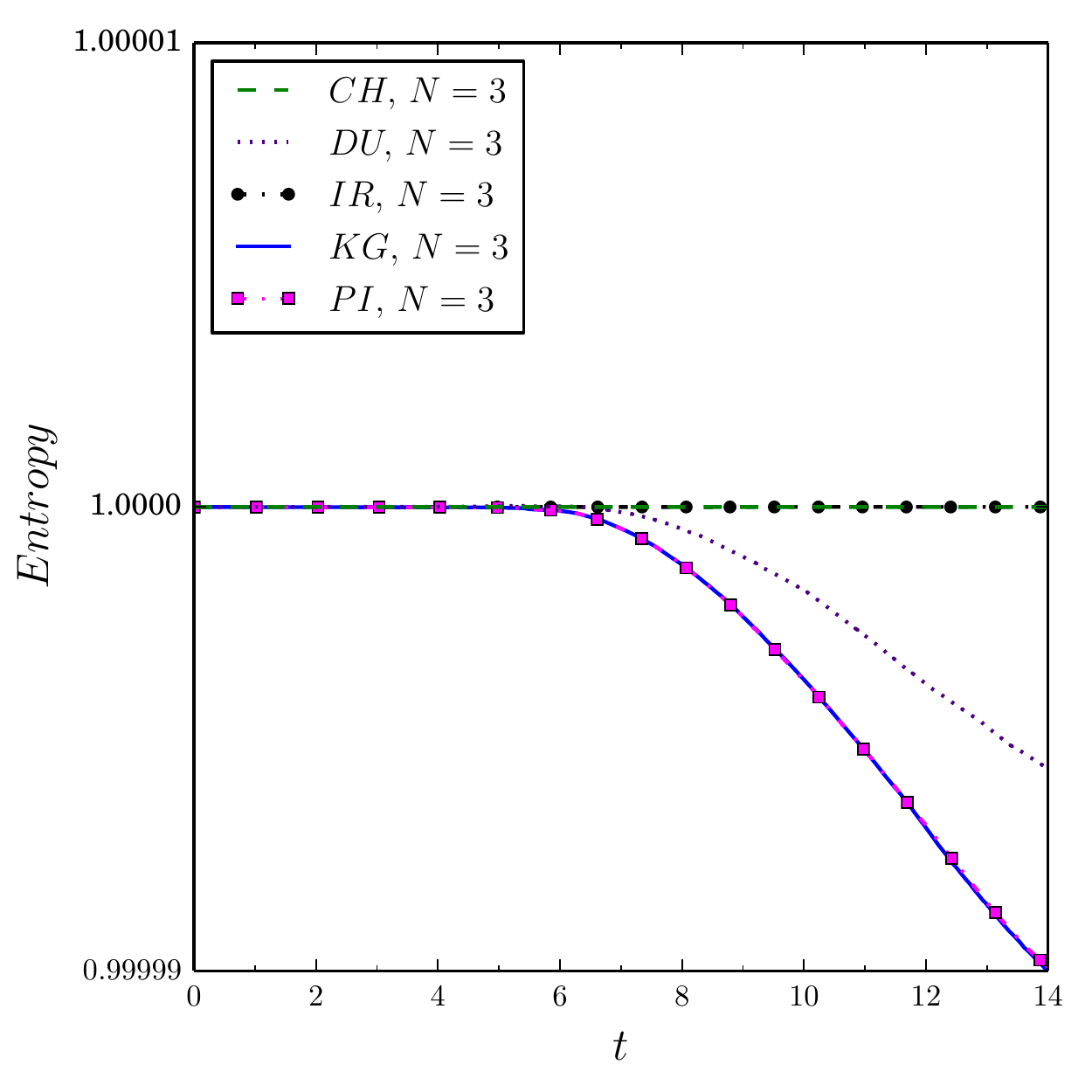}\hspace*{2mm}\includegraphics[trim=0 0 0 0,clip,width=0.45\textwidth]{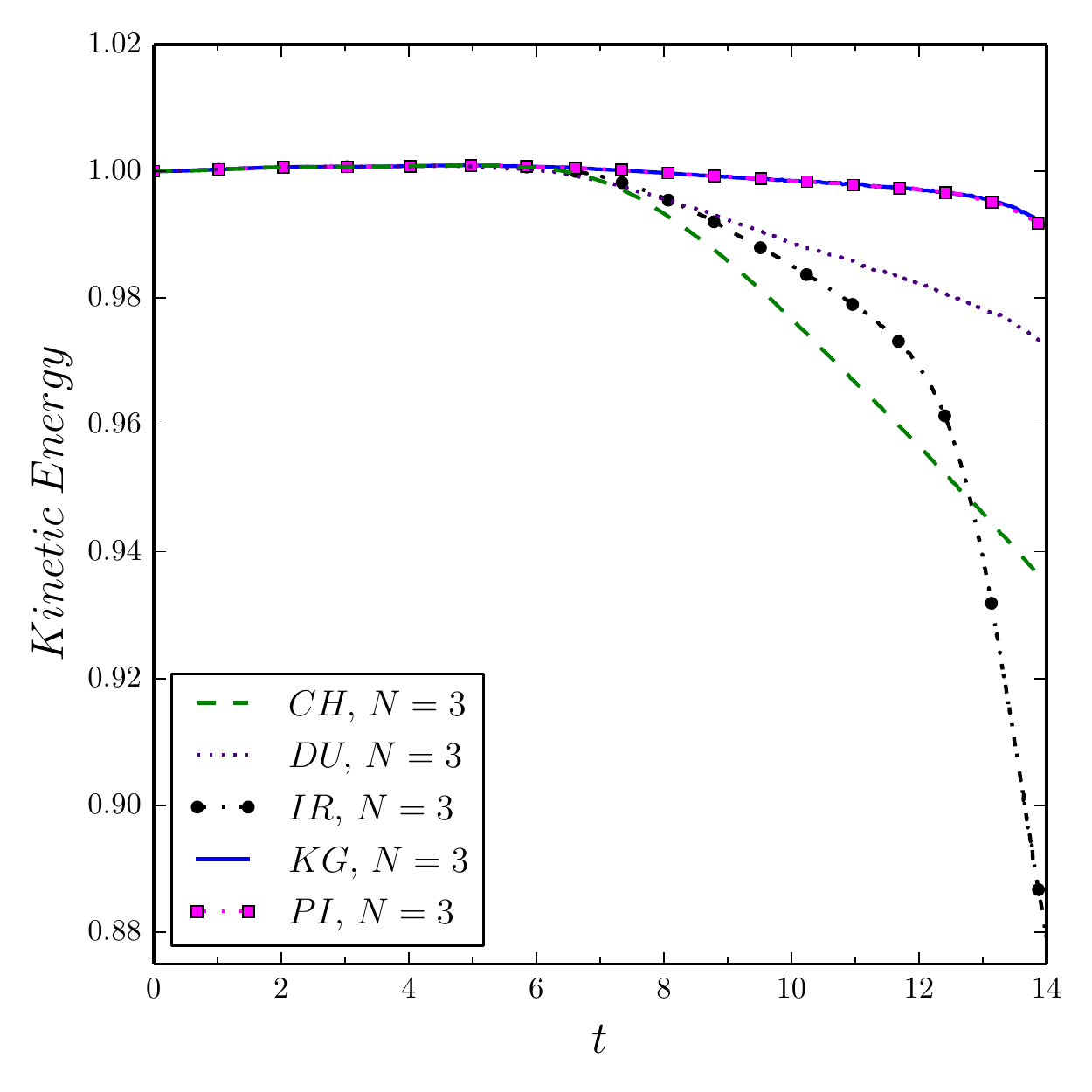}
\caption{\label{fig:cons}Time evolution of the discrete total entropy and total kinetic energy for the case $N=3$ and $16^3$ grid cells. All results are obtained with $CFL=0.1$.}
\end{figure}

Focusing first on the entropy conservation, as expected the IR and CH schemes conserve entropy basically to machine precision, whereas the other schemes show a small decay of the total entropy. However, the $y-$axis shows that the entropy dissipation of the other formulations is very small and arguably negligible.

The entropy conservation results lie in stark contrast to the kinetic energy preservation where substantial differences are observed. Note, that we have three schemes KG, PI and CH which, by construction, preserve the kinetic energy in the sense that the discrete pressure work changes the total kinetic energy, not the advective terms. We can see in the right part of Fig. \ref{fig:cons} that KG and PI give similar results and indeed best preserve the total kinetic energy. Especially in comparison to the other discretisations, which show a loss of up to 10\% of total kinetic energy. The IR scheme performs the worst.

There are two noteworthy remarks. First, the DU scheme conserves neither entropy nor preserves kinetic energy. However, this scheme has a lower loss of total kinetic energy compared to the entropy conserving schemes IR and CH. Second, although the CH scheme is formally entropy conserving \textit{and} kinetic energy preserving, the results clearly show that the loss of kinetic energy is much larger than for the KG and PI schemes. This is an unexpected result and demands further investigation. It turns out that although the advective terms do not formally contribute to the evolution of the kinetic energy as desired, the discretisation of the pressure plays a crucial role for the kinetic energy. Both, KG and PI use a simple arithmetic mean for the pressure and it turns out that this seems to be important for the kinetic energy evolution.

The CH scheme chooses the discretisation of the pressure in such a way that entropy is discretely conserved. The pressure discretisation is much more complicated than the simple arithmetic mean and a possible explanation is that this introduces additional errors that affect the discrete total kinetic energy. To support this claim, we modify the CH scheme by changing the pressure discretisation to the simple arithmetic mean.  The right part of Fig. \ref{fig:keep} shows the evolution of the discrete total kinetic energy. The CH scheme with the pressure fix now behaves like the KG and PI scheme and significantly reduces the loss of kinetic energy in comparison to the original CH scheme. However, the CH scheme with pressure fix is, of course, not entropy conservative anymore as shown in the left part of Fig. \ref{fig:keep}. 
\begin{figure}[!htbp]
\centering
\includegraphics[trim=0 0 0 0,clip,width=0.45\textwidth]{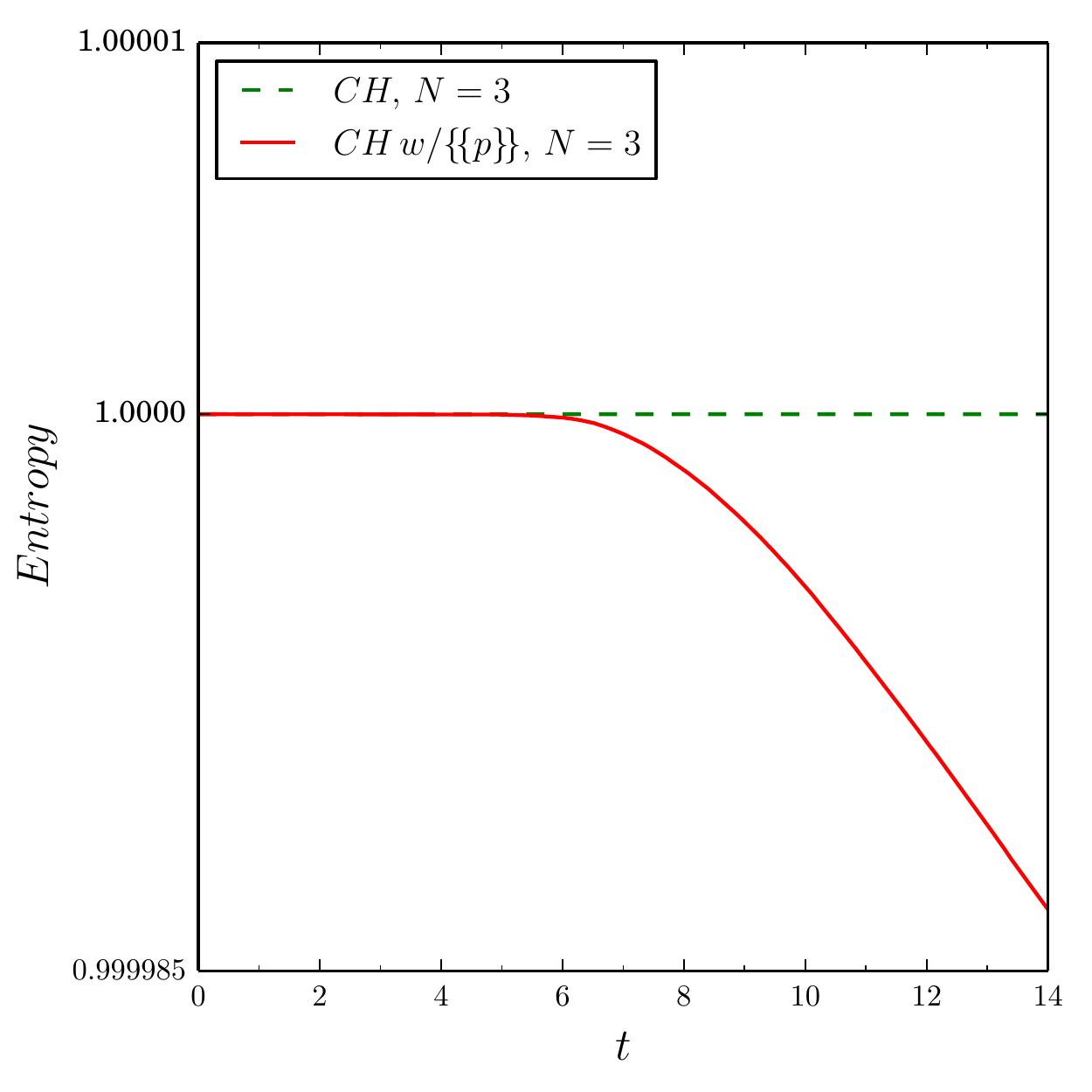}\hspace*{2mm}\includegraphics[trim=0 0 0 0,clip,width=0.45\textwidth]{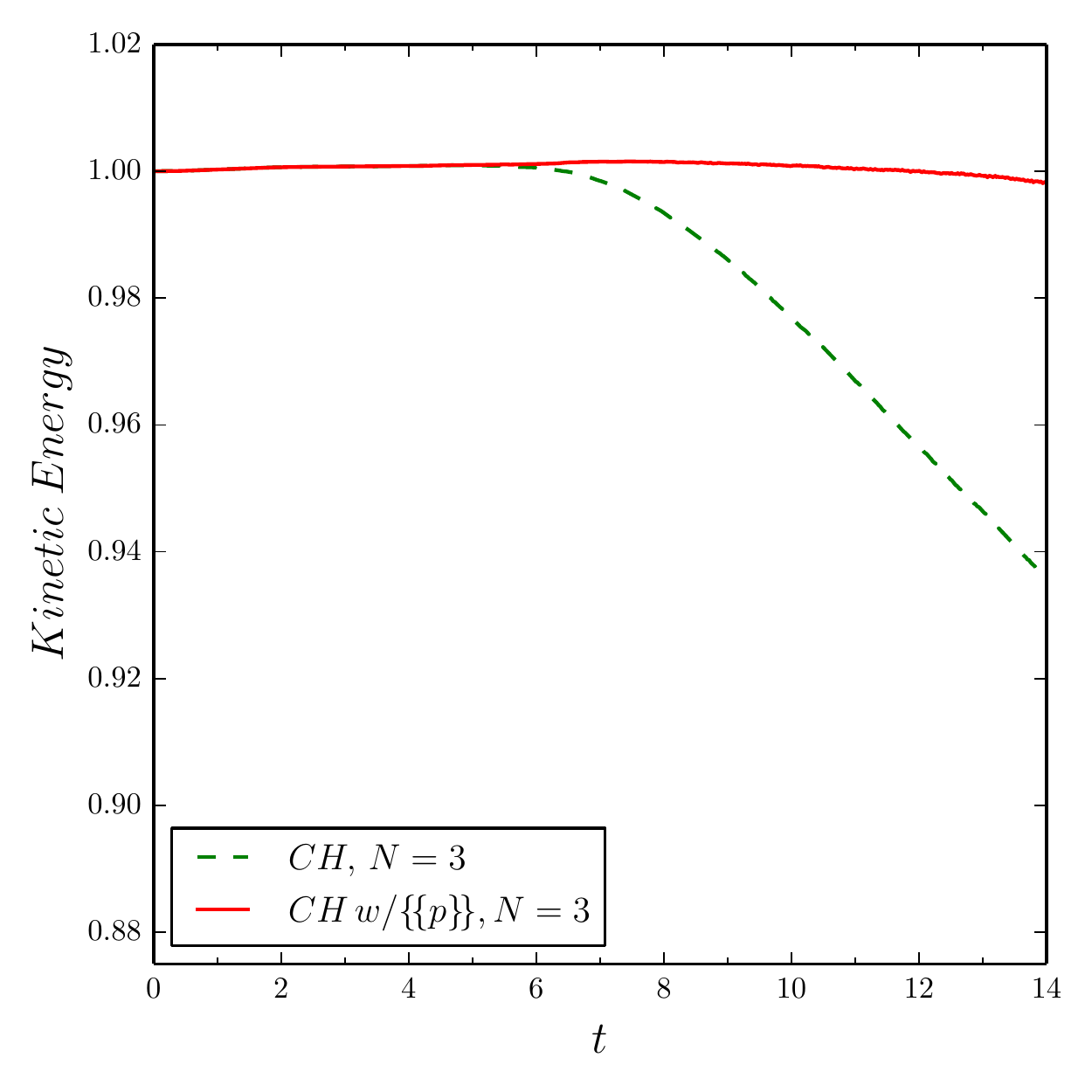}
\caption{\label{fig:keep}Comparison of original CH scheme and the CH scheme where the pressure discretisation is changed to a simple arithmetic mean, as in the KG and PI variants. The plot shows the time evolution of the discrete total entropy (left) and the discrete total kinetic energy (right) for the case $N=3$ and $16^3$ grid cells. All results are obtained with $CFL=0.1$.}
\end{figure}

Summarising this section, the IR and CH schemes show machine precision entropy conservation as expected. The KG and PI schemes show near discrete kinetic energy \textred{conservation for the inviscid Taylor-Green vortex}. The CH scheme shows discrete kinetic energy conservation only with a modification of the pressure discretisation\textred{, which demonstrates that for actual kinetic energy preservation the discretisation of the pressure has a strong impact.}  While the DU scheme does not possess any specific auxiliary conservation properties, it is still worth pointing out that it lost less of the total kinetic energy compared to the entropy conserving schemes. 

\subsection{Robustness}

In this main results section, the robustness of the schemes is investigated. In contrast to the former investigations, we now activate the interface stabilisation. The interface stabilisation terms depend on the jump of the solution at the interface and introduce dissipation. The main result of the numerical investigations is that with the stabilisation terms, all split schemes except the MO variant are robust in the sense that even for very high polynomial degrees the simulations do not crash even with severe under-resolution. 

To make our point, we mimic a table presented in Moura et al. \cite{rodrigo_iLES}, where the stability of an over-integrated DG method for the inviscid Taylor-Green vortex is reported. It is important to note that for high resolutions, the over-integrated DG discretisations crashed and it was concluded that the dissipation from the surface integrals is not enough to stabilise the discretisation. In contrast to the over-integration results, all simulations performed with the DGSEM split forms listed in Tbl. \ref{tab:TGVStability} for the schemes KG, IR, PI, CH, and DU successfully run up to the final time of $t=14$. Note that the configurations marked with a box are the ones that run to the final time when over-integration is used \cite{rodrigo_iLES}. All other configurations could not be successfully completed, even when decreasing the time step and increasing the number of integration nodes up to a factor of four in each spatial direction [personal communication with Rodrigo Moura]. Thus, the entropy stable schemes and the new split form schemes offer a substantial advantage with respect to robustness compared to fully integrated DG.
\begin{table}[!ht]
\begin{center}
  \begin{tabular}{l|ccccc}
 \hline
 DOFs/$N$ & $3$ & $4$& $5$& $6$& $7$ \\[0.0cm]\hline
 $113^3$ & {\boxed{$28$}} & {\boxed{$23$}} & {{$19$}} & {{$16$}} & {{$14$}} \\[0.0cm]\hline
$159^3$ & {\boxed{$39$}} & {\boxed{$32$}} & {{$28$}} & {{$23$}} & {{$19$}} \\[0.0cm]\hline
$227^3$ & {\boxed{$56$}} & {\boxed{$45$}} & {{$39$}} & {{$32$}} & {{$28$}} \\[0.0cm]\hline
$320^3$ & {{$80$}}    & {{$64$}}     & {{$53$}} & {{$46$}} & {{$40$}} \\[0.1cm]\hline
$450^3$ & {{$113$}} & {{$90$}}      & {{$75$}} & {{$64$}} & {{$56$}} \\
 \toprule
  \end{tabular}
  \caption{List of grid cell and polynomial degree configurations for KG, PI, IR, DU, CH schemes that were investigated. \textred{The entries refer to the numbers of elements in one direction.} None of these schemes crashed. The number of DOFs was chosen to keep a factor of $\sqrt{2}$ between consecutive DOFs while allowing for (approximately) integer numbers of elements across the range of polynomial degrees considered. The configurations marked by a box are the ones that are stable with over-integration, whereas the other configuration could not be successfully finished with over-integration alone \cite{rodrigo_iLES}.}
  \label{tab:TGVStability}
\end{center}
\end{table}

To further assess the behaviour of the schemes for the inviscid Taylor-Green vortex, we investigate the evolution of the total kinetic energy dissipation rate, $-d\,\kappa/dt$, and the enstrophy
\begin{equation}
\sigma : = \frac{1}{|\Omega|}\int\limits_{\Omega}\frac{\rho}{2}\omega\cdot\omega\,d\Omega,
\end{equation}
where $\omega$ is the vorticity vector. In the left part of Fig. \ref{fig:dkdt_N3}, the resulting dissipation rate for the configuration $N=3$ with $16^3$ grid cells ($64^3$ degrees of freedom) for all schemes is plotted. Although there is a difference between the split form variants, it is fairly small. In the right part of Fig. \ref{fig:dkdt_N3} we focus on the KG variant only and increase the number of degrees of freedom to compare the $64^3$ results to the configuration with $32^3$ grid cells. 
\begin{figure}[!htbp]
\centering
\includegraphics[trim=0 0 0 0,clip,width=0.45\textwidth]{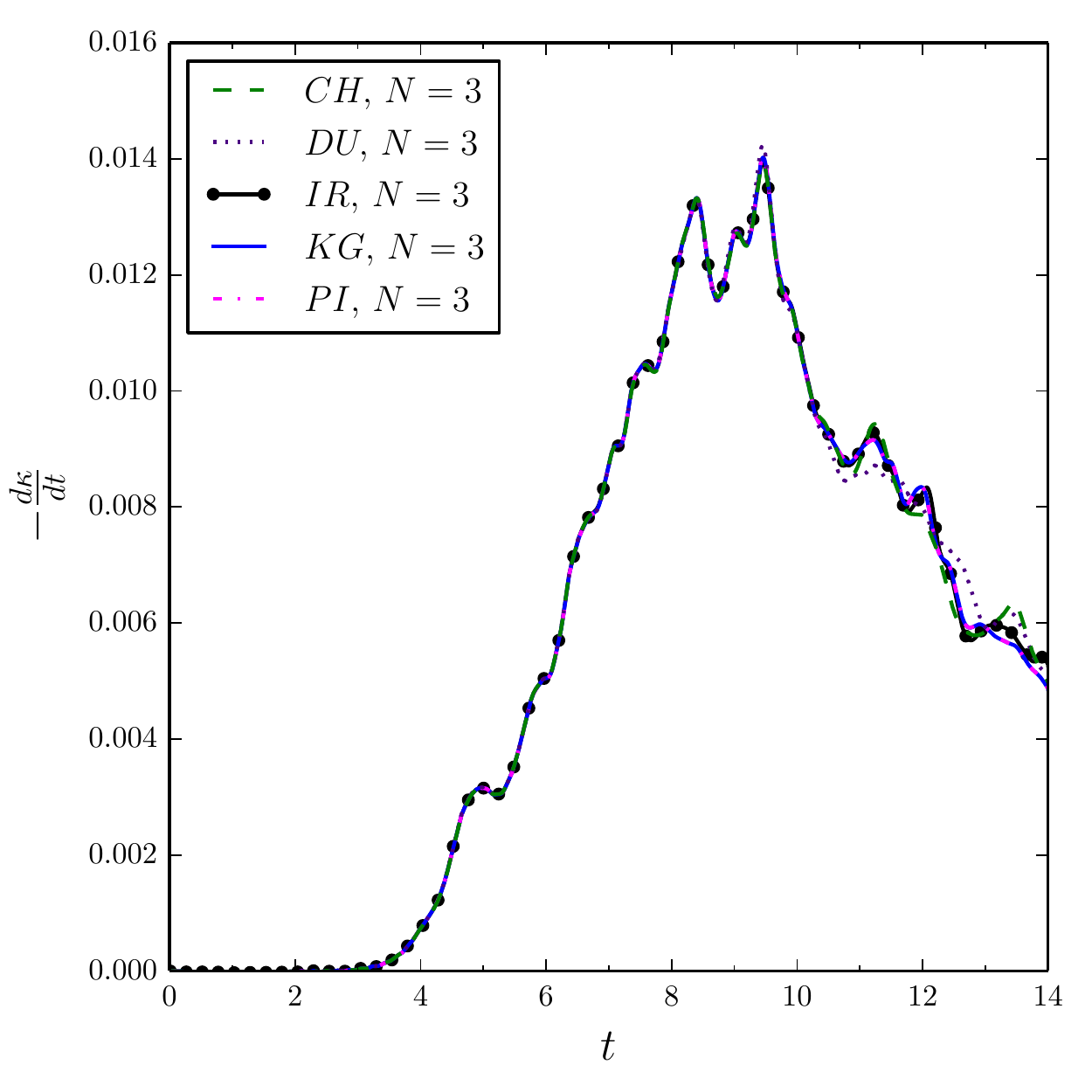}\hspace*{2mm}\includegraphics[trim=0 0 0 0,clip,width=0.45\textwidth]{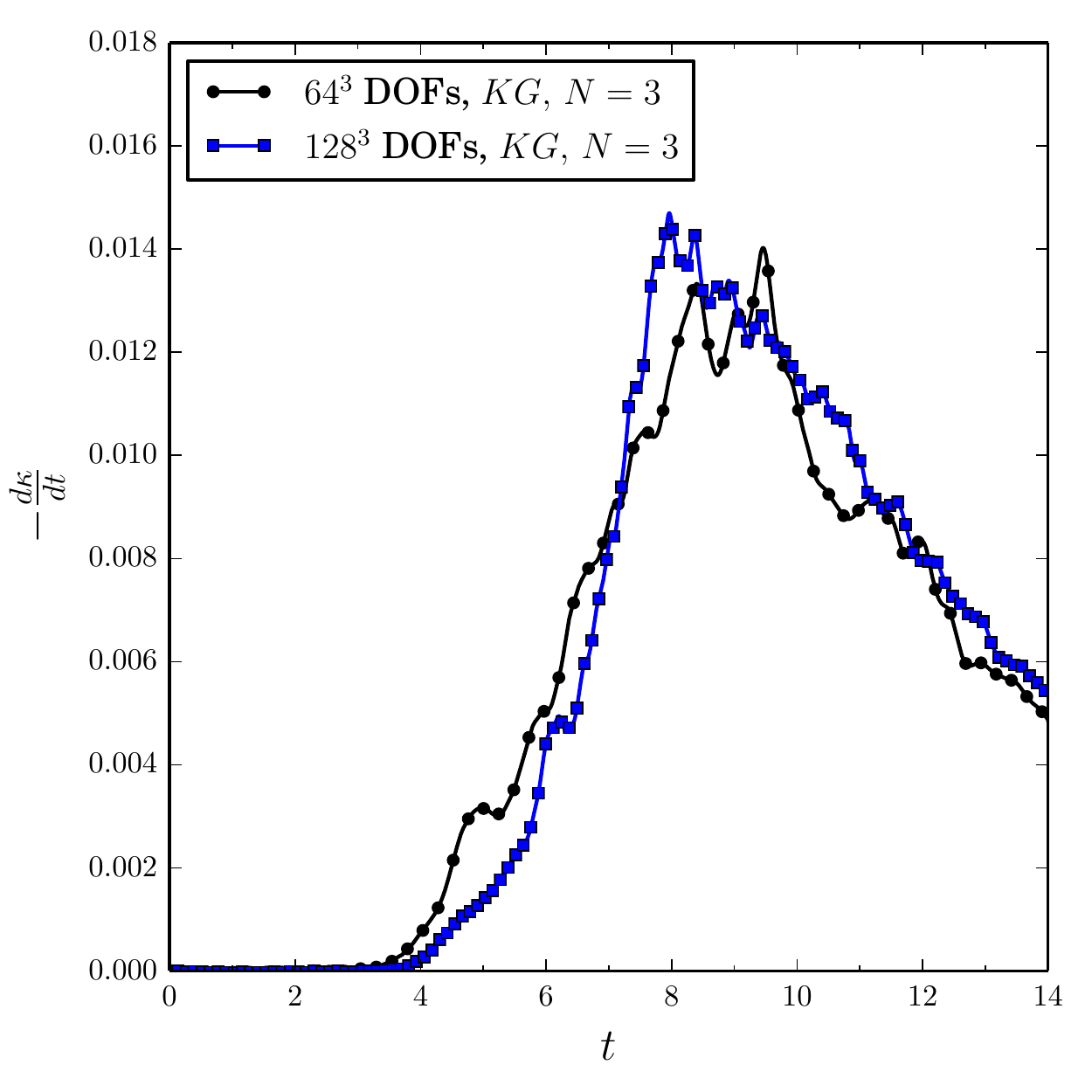}
\caption{\label{fig:dkdt_N3} Plot of the total kinetic energy dissipation rate. Left: $N=3$ with $64^3$ degrees of freedom for all stable split forms. Right: KG scheme with $N=3$ and $16^3$ and $32^3$ grid cells respectively.}
\end{figure}
It is interesting to note that again, the dissipation rate does not change significantly.

It seems that the dissipation introduced by the numerical surface flux function has a certain limit \textblue{ on the dissipation of the integrated kinetic energy}. This is further supported by the results in Fig. \ref{fig:dkdt_N7_15}, where for configurations with fixed $64^3$ degrees of freedom the polynomial degrees are increased to $N=7$ and $N=15$. Note, that the dissipation rates do not significantly change. 
\begin{figure}[!htbp]
\centering
\includegraphics[trim=0 0 0 0,clip,width=0.45\textwidth]{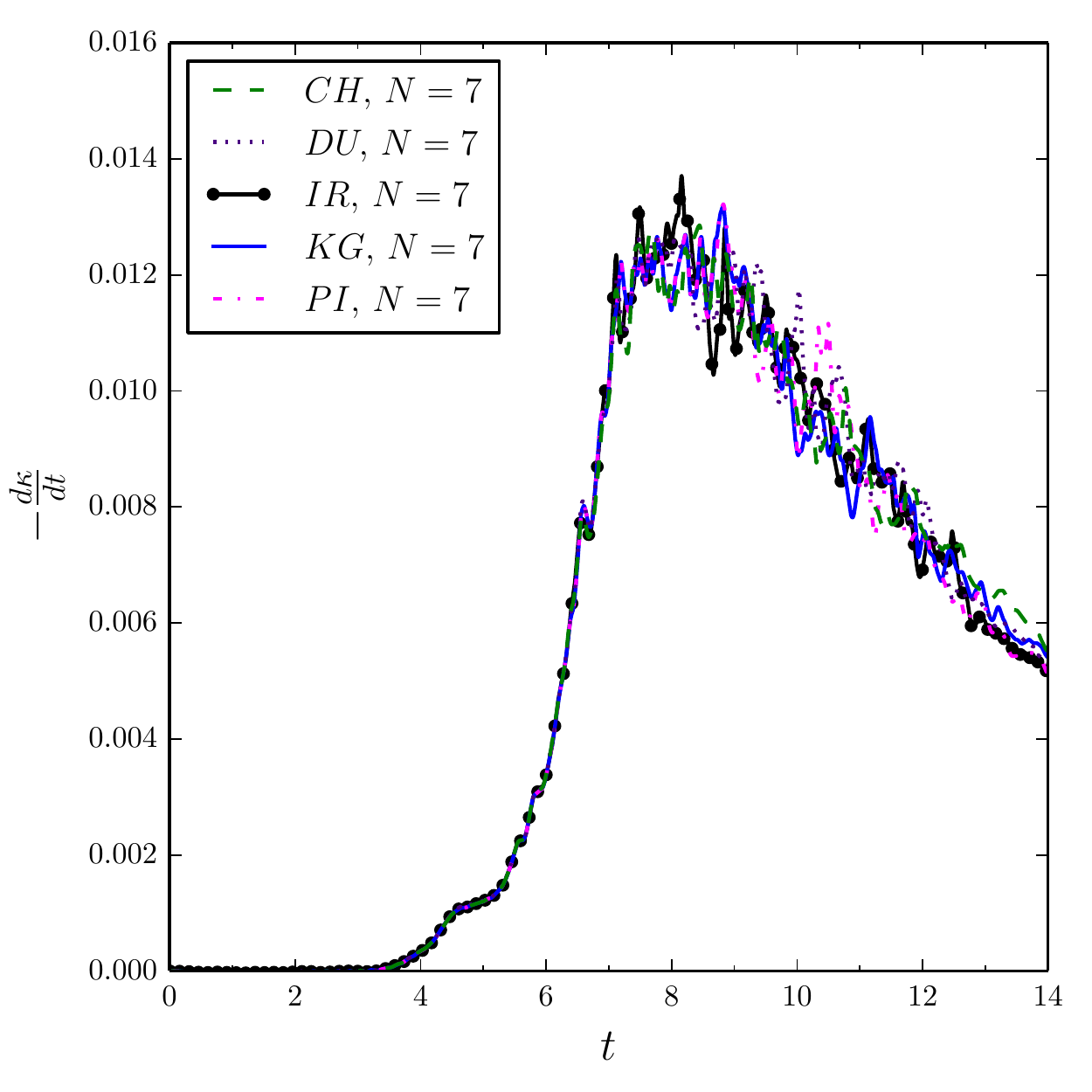}\hspace*{2mm}\includegraphics[trim=0 0 0 0,clip,width=0.45\textwidth]{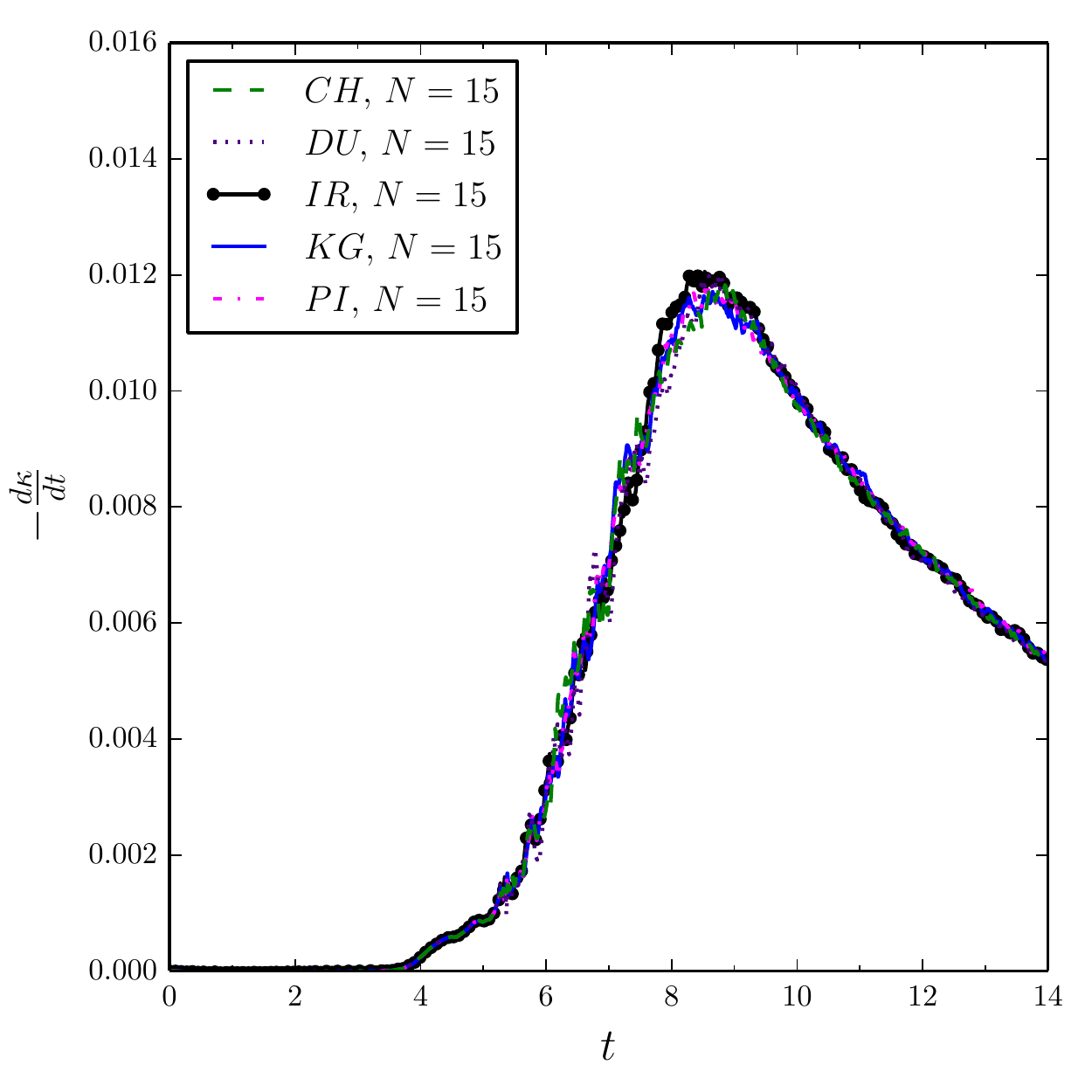}
\caption{\label{fig:dkdt_N7_15} Plot of the total kinetic energy dissipation rate. Left: $N=7$ with $8^3$ grid cells for all stable split forms. Right: $N=15$ with $4^3$ grid cells for all stable split forms.}
\end{figure}

In contrast to the behaviour of the dissipation rates, the evolution of the enstrophy strongly depends on the spatial resolution and on the polynomial degree. Figure \ref{fig:enstrophy} shows the enstrophy as a function of time for a fixed $64^3$ degrees of freedom in the left part. The difference between the split forms is again small, however the impact of the polynomial degree increases the magnitude of the total of enstrophy by an order of magnitude. 
The right part of Fig. \ref{fig:enstrophy} considers again the KG scheme with $N=3$ and $16^3$ and $32^3$ grid cells, respectively. By comparing the left and right plot, it is interesting to note that the enstrophy of the configuration with $N=7$ and $8^3$ grid cells ($64^3$ degrees of freedom) has a higher maximum than the configuration with $N=3$ and $32^3$ grid cells ($128^3$ degrees of freedom). 
\begin{figure}[!htbp]
\centering
\includegraphics[trim=0 0 0 0,clip,width=0.45\textwidth]{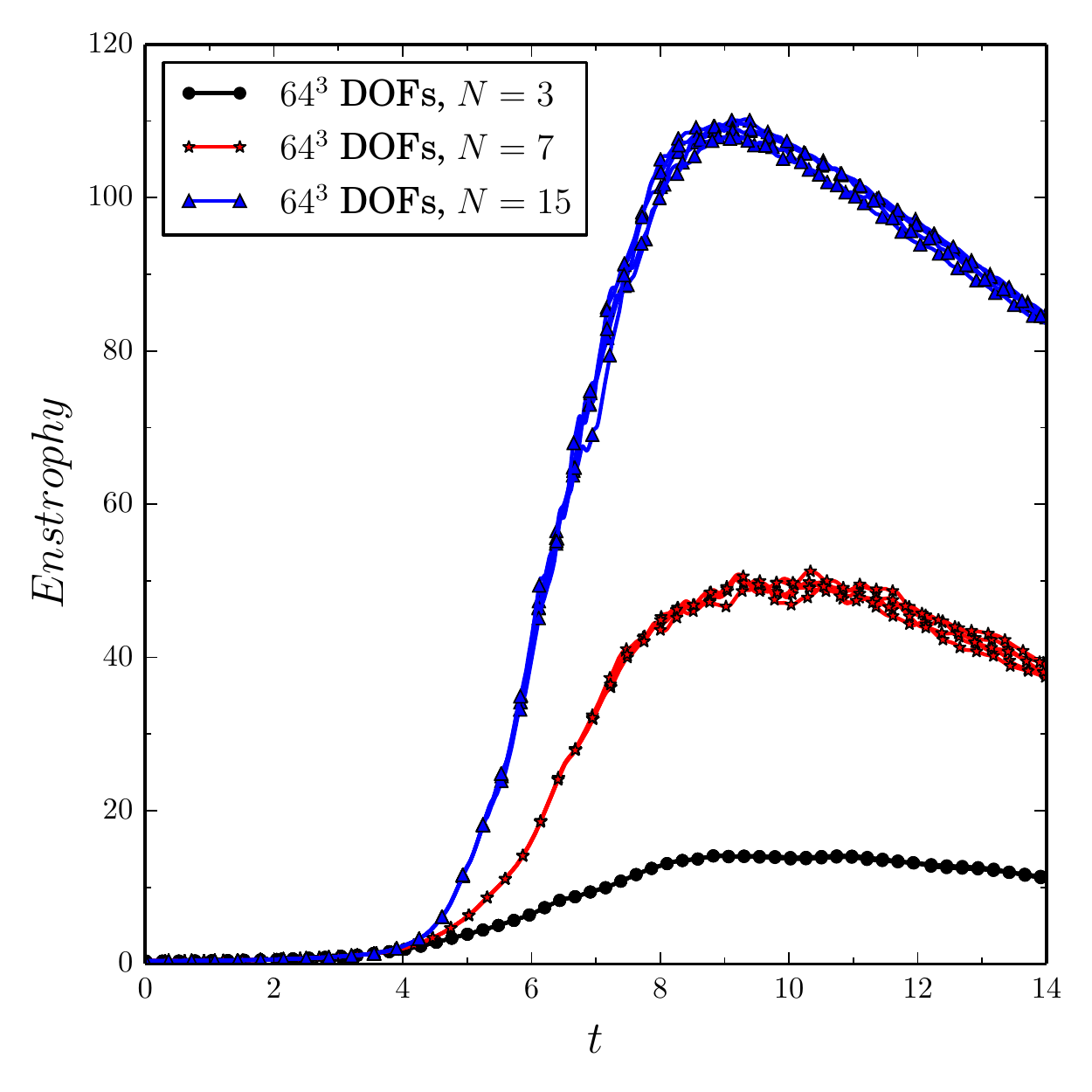}\hspace*{2mm}\includegraphics[trim=0 0 0 0,clip,width=0.45\textwidth]{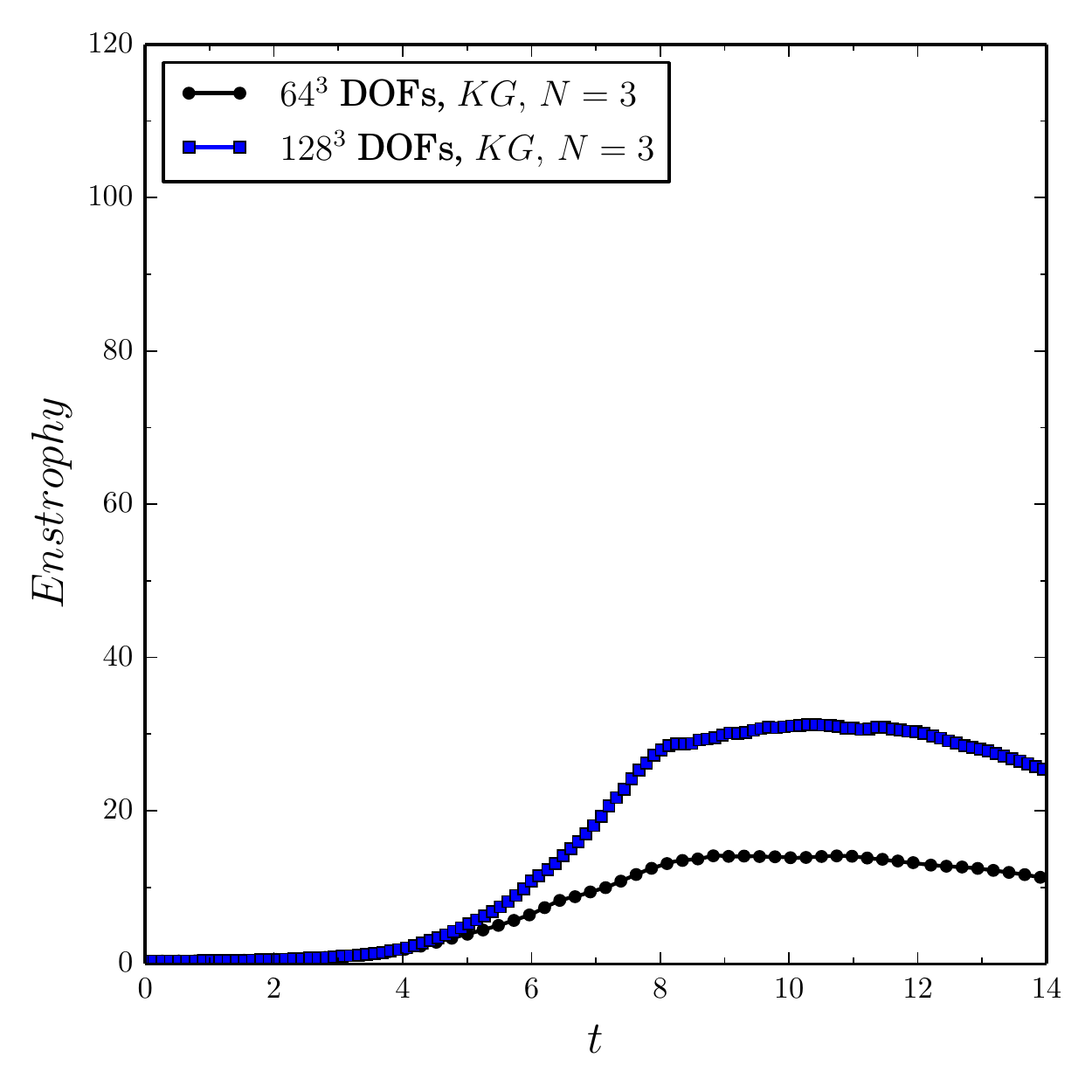}
\caption{\label{fig:enstrophy} Temporal evolution of the enstrophy. Left: Fixed $64^3$ degrees of freedom for all stable split forms for configurations with polynomial degree $N=3,\,7,\,15$. Right: Enstrophy of the KG scheme with $N=3$ and $16^3$ and $32^3$ grid cells respectively.}
\end{figure}

The significant impact of the polynomial degree on the evolution of the enstrophy has a direct consequence for the estimated numerical viscosity of the discretisations. For solutions of the incompressible viscous Taylor-Green vortex, the dissipation rate is directly linked to the enstrophy via the physical viscosity $\mu$ \cite{tcfd2012}
\begin{equation}
-\frac{d\,\kappa}{dt} = 2\,\mu\,\sigma.
\end{equation}
We exploit this relationship to find an estimate for the viscosity introduced by the numerical discretisation. By relating the discrete dissipation rate and the enstrophy over time, we get an evolution of the numerical dissipation of the scheme
\begin{equation}
\mu_{num} \approx \frac{-\frac{d\,\kappa}{dt}}{2\,\sigma}.
\end{equation} 
The numerical viscosity estimate is plotted in the left part of Fig. \ref{fig:numvisc}, where again the degrees of freedom are fixed to $64^3$. Due to the significantly higher enstrophy for the high polynomial degree discretisations, the estimate of the numerical viscosity is much lower. This fits to the apparent  ``high-order schemes have lower dissipation" paradigm. However, it is interesting to note that the actual dissipation rate of the kinetic energy does not vary much. In the right part of Fig. \ref{fig:numvisc},  we again compare the KG scheme for the $N=3$ and $16^3$ configuration. 
\begin{figure}[!htbp]
\centering
\includegraphics[trim=0 0 0 0,clip,width=0.45\textwidth]{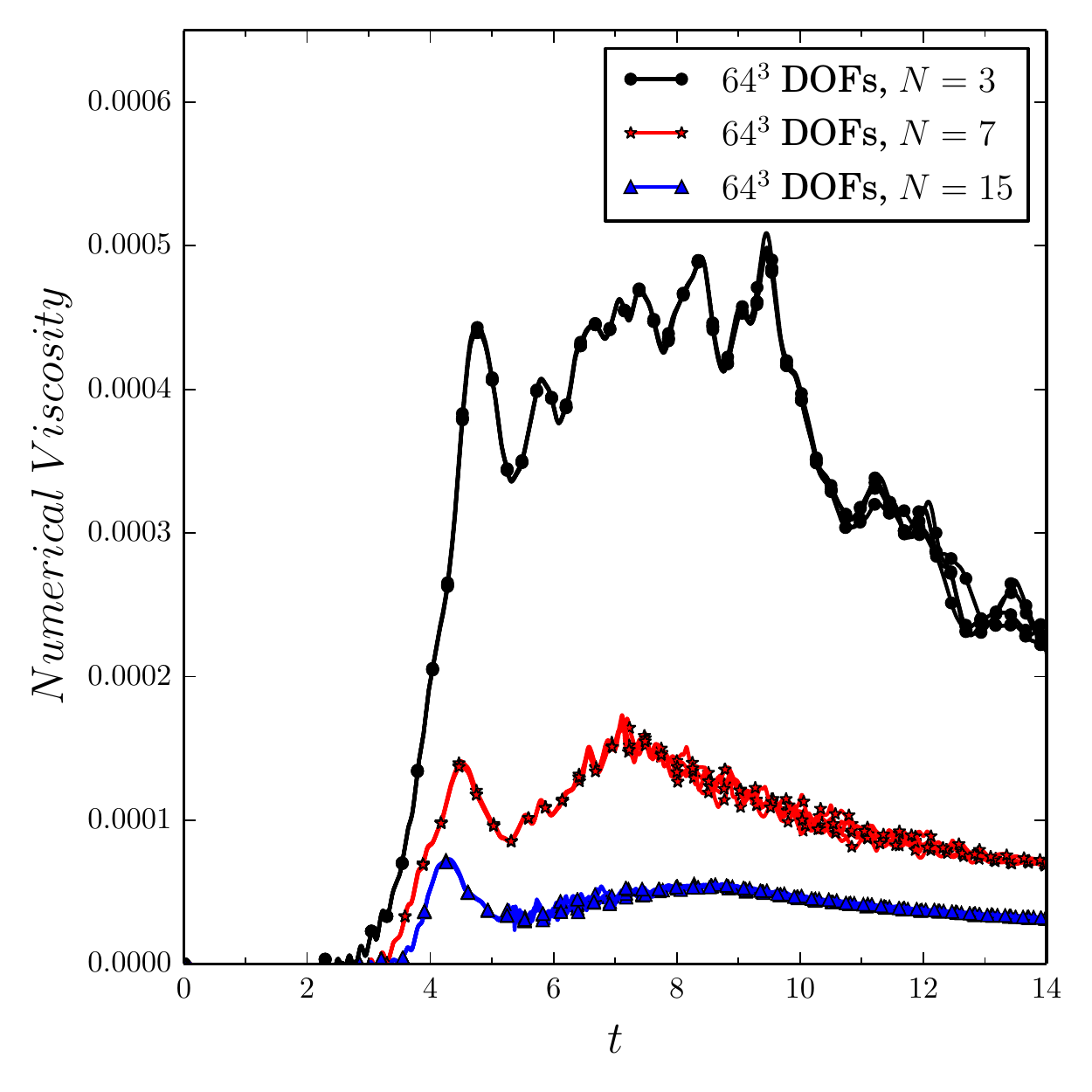}\hspace*{2mm}\includegraphics[trim=0 0 0 0,clip,width=0.45\textwidth]{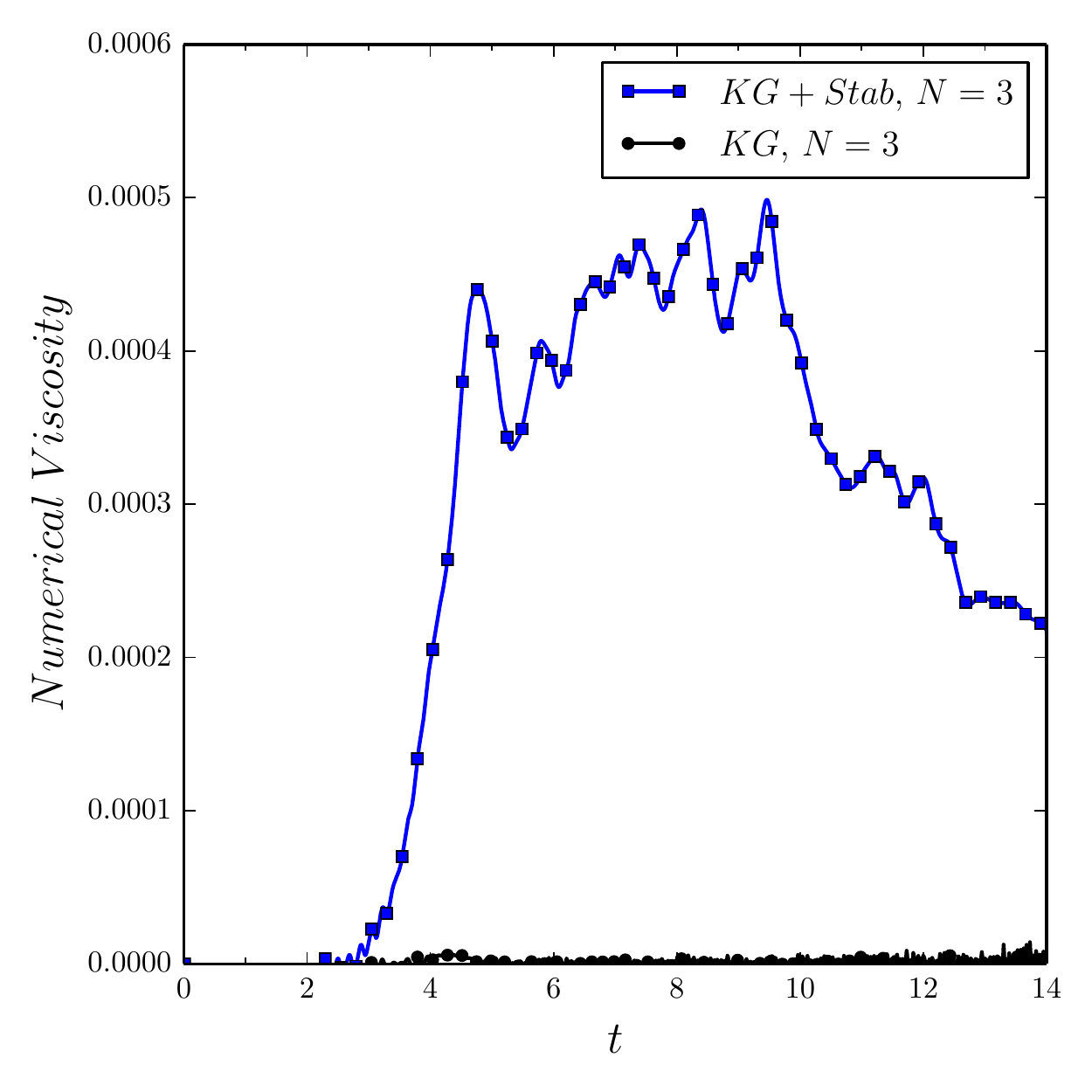}
\caption{\label{fig:numvisc} Plot of the numerical viscosity over time. Left: Fixed $64^3$ degrees of freedom for all stable split forms for configurations with polynomial degree $N=3,\,7,\,15$. Right: Comparison of KG results with and without stabilisation for $N=3$ and $16^3$ grid cells.}
\end{figure}
We directly compare the impact of the interface stabilisation terms on the numerical viscosity and plot results from the simulation with interface stabilisation terms and without. It can be clearly observed that the no stabilisation run has almost zero numerical viscosity, while being stable for this severely under-resolved test case. 

At the end of this section, we note that the variant KG (and PI) with cubic split forms was introduced by Kennedy and Gruber \cite{kennedy2008} to account for large density variations. Thus, it is interesting to investigate the difference of DU and KG/PI for a test configuration with higher compressibility. To do so we change the initial pressure for inviscid the Taylor-Green vortex so that the Mach number increases to $Ma=0.4$. This introduces more compressibility effects and as shown in Fig. \ref{fig:M04_1} and \ref{fig:M04_2}, this causes problems for the DU variant. For the configuration $N=7$ with $16^3$ grid cells, we can observe that DU crashes at $t\approx 10.8$, even with very small CFL numbers. The variants KG, PI and IR, CH all run until the final time, demonstrating increased robustness of those variants for problems with higher density variations. 
\begin{figure}[!htbp]
\centering
\includegraphics[trim=0 0 0 0,clip,width=0.45\textwidth]{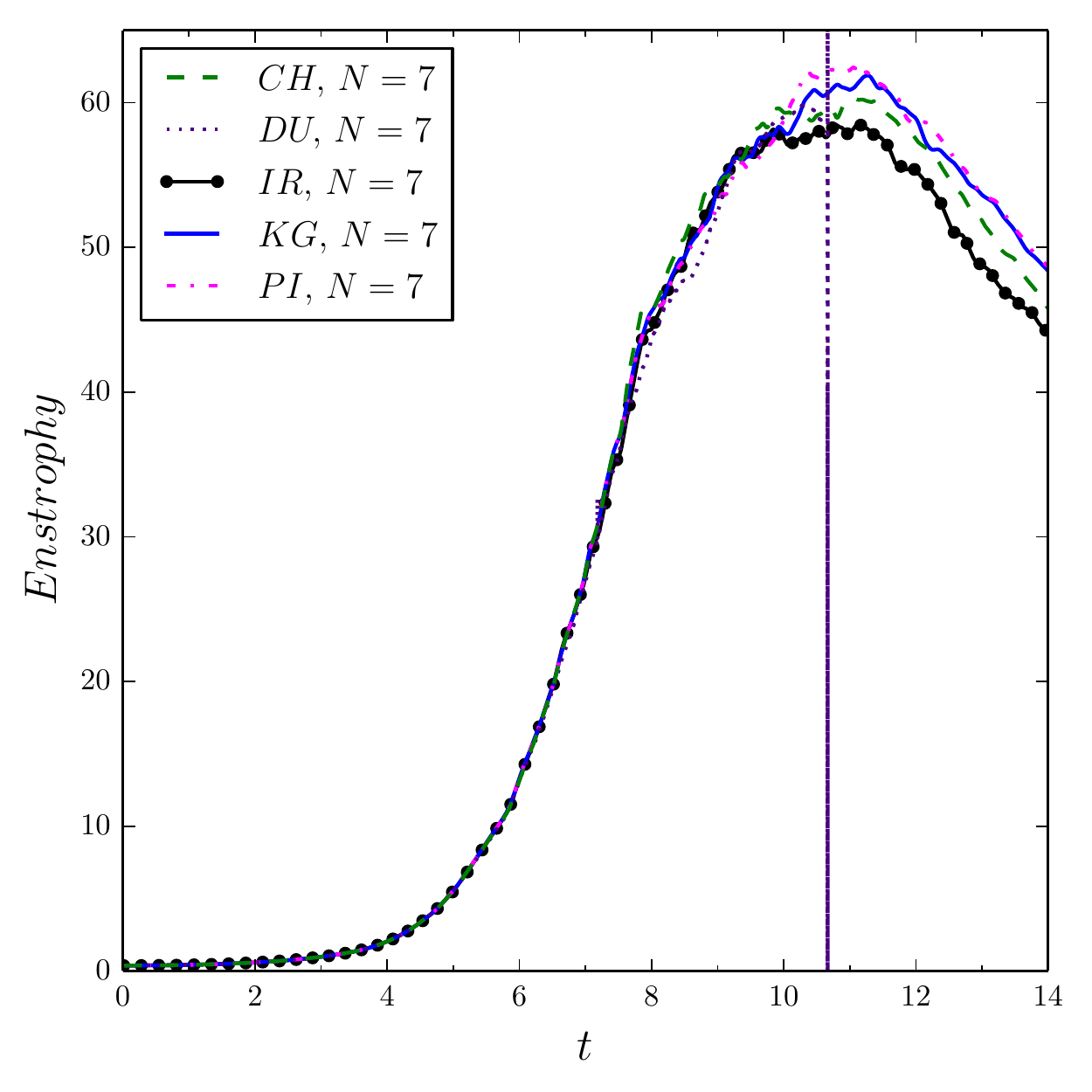}\hspace*{2mm}\includegraphics[trim=0 0 0 0,clip,width=0.45\textwidth]{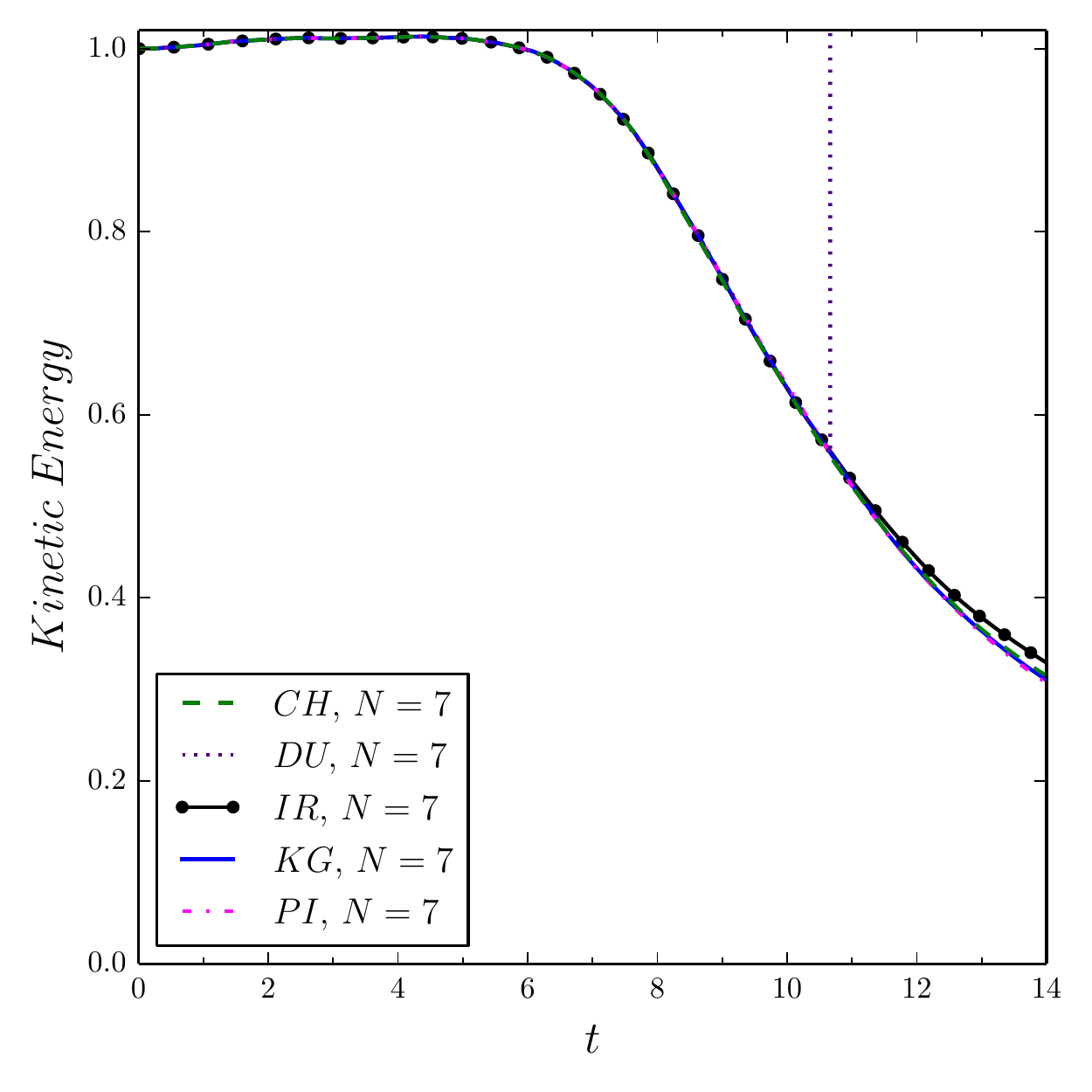}
\caption{\label{fig:M04_1} Results of simulation with $16^3$ grid cells and $N=7$ ($128^3$ degrees of freedom) for the Taylor-Green vortex with a Mach number of $Ma=0.4$. Left: Plot of total enstrophy. Right: Plot of total kinetic energy. Note that DU crashes at $t\approx 10.8$, indicated by an extreme spike of the quantities.}
\end{figure}

\begin{figure}[!htbp]
\centering
\includegraphics[trim=0 0 0 0,clip,width=0.45\textwidth]{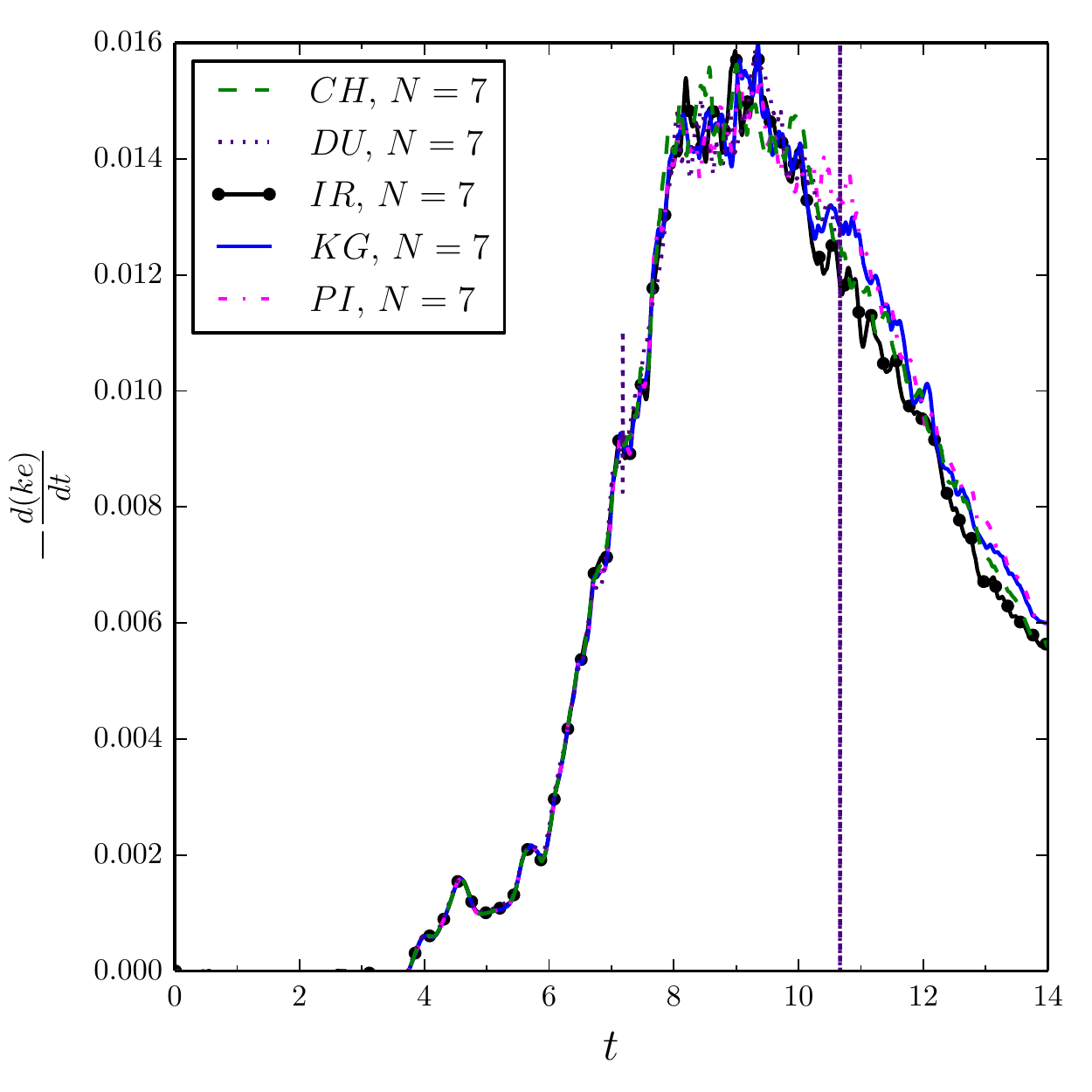}\hspace*{2mm}\includegraphics[trim=0 0 0 0,clip,width=0.45\textwidth]{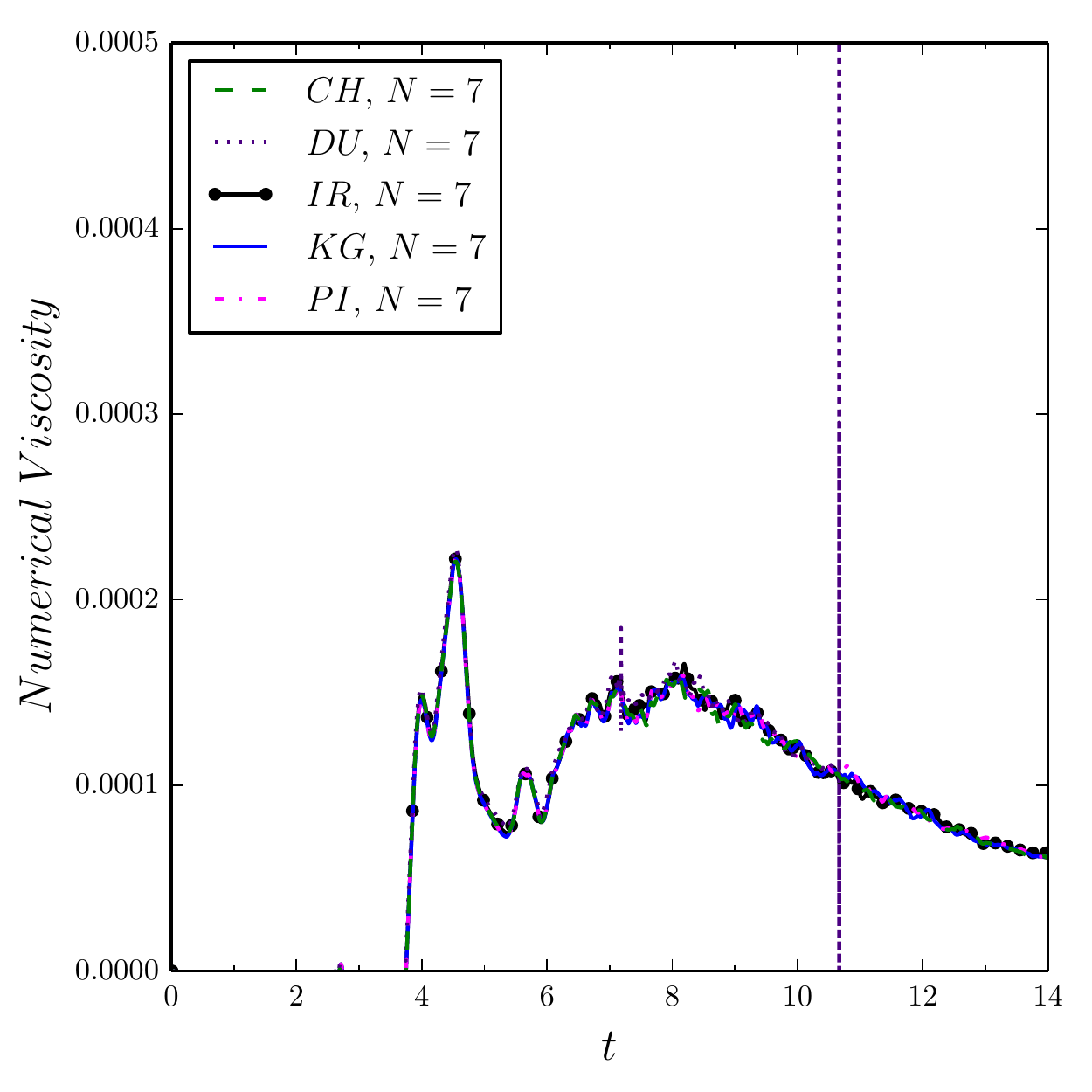}
\caption{\label{fig:M04_2} Results of simulation with $16^3$ grid cells and $N=7$ ($128^3$ degrees of freedom) for the Taylor-Green vortex with a Mach number of $Ma=0.4$. Left: Plot of dissipation rate of total kinetic energy. Right: Plot of Numerical viscosity estimation. Note that DU crashes at $t\approx 10.8$, indicated by an extreme spike of the quantities.}
\end{figure}

Summarising the investigations of this section, the most important result is that all split form schemes, except the variant MO,  significantly improve the robustness of high-order DG discretisations compared to polynomial de-aliasing with over-integration. We could demonstrate that the cubic split forms are more robust in comparison to the quadratic splitting when the flow is compressible. Furthermore, the influence of the numerical viscosity can be directly observed and tracked by comparison to nearly dissipation free variants of the scheme. This seems to be a beneficial and interesting feature for future investigations on turbulence modelling.

\section{\textblue{Final Remarks}}\label{sec:conclusion}

\subsection{\textblue{Discussion}}

As a final discussion, the identities in Lem. \ref{Lem} form the basis for a dictionary of a translation from split forms into flux forms and vice versa. We used these to translate known alternative formulations of the non-linear compressible Euler terms. However, this dictionary can be used to generate a multitude of new split forms by choosing symmetric and consistent flux approximations. To demonstrate this, we consider an example based on the Roe variables
\begin{equation}
\begin{split}
q_1 &= \sqrt{\rho},\\
q_2 &= \sqrt{\rho}\,u,\\
q_3 &= \sqrt{\rho}\,v,\\
q_4 &= \sqrt{\rho}\,w,\\
q_5 &= \sqrt{\rho}\,h.
\end{split}
\end{equation}
This specific set of variables is interesting, as it reformulates the flux into only quadratic products, e.g. the flux in the $x-$direction is
\begin{equation}
F(U) = 
\begin{pmatrix}
q_1\,q_2\\
q_2^2 + \frac{\gamma -1}{\gamma}\left(q_1\,q_5 - \frac{1}{2}\left(q_2^2+q_3^2+q_4^2\right) \right)\\
q_2\,q_3\\
q_2\,q_4\\
q_2\,q_5
\end{pmatrix}.
\end{equation} 
Assuming now a quadratic splitting of the products, we can directly translate this into an equivalent numerical volume flux $QU$, based on the quadratic identities of Lem. \ref{Lem}
\begin{equation}
F^{\#}_{QU}(U_{ijk},U_{mjk}) = 
\begin{pmatrix}
\average{q_1}\,\average{q_2}\\
\average{q_2}^2 + \frac{\gamma -1}{\gamma}\left(\average{q_1}\,\average{q_5} - \frac{1}{2}\left(\average{q_2}^2+\average{q_3}^2+\average{q_4}^2\right) \right)\\
\average{q_2}\,\average{q_3}\\
\average{q_2}\,\average{q_4}\\
\average{q_2}\,\average{q_5}
\end{pmatrix},
\end{equation} 
and thus, generate a new scheme. In Fig. \ref{fig:finale} we provide a comparison of the dissipation rates of the new scheme against the previous results.
\begin{figure}[!htbp]
\centering
\includegraphics[trim=0 0 0 0,clip,width=0.45\textwidth]{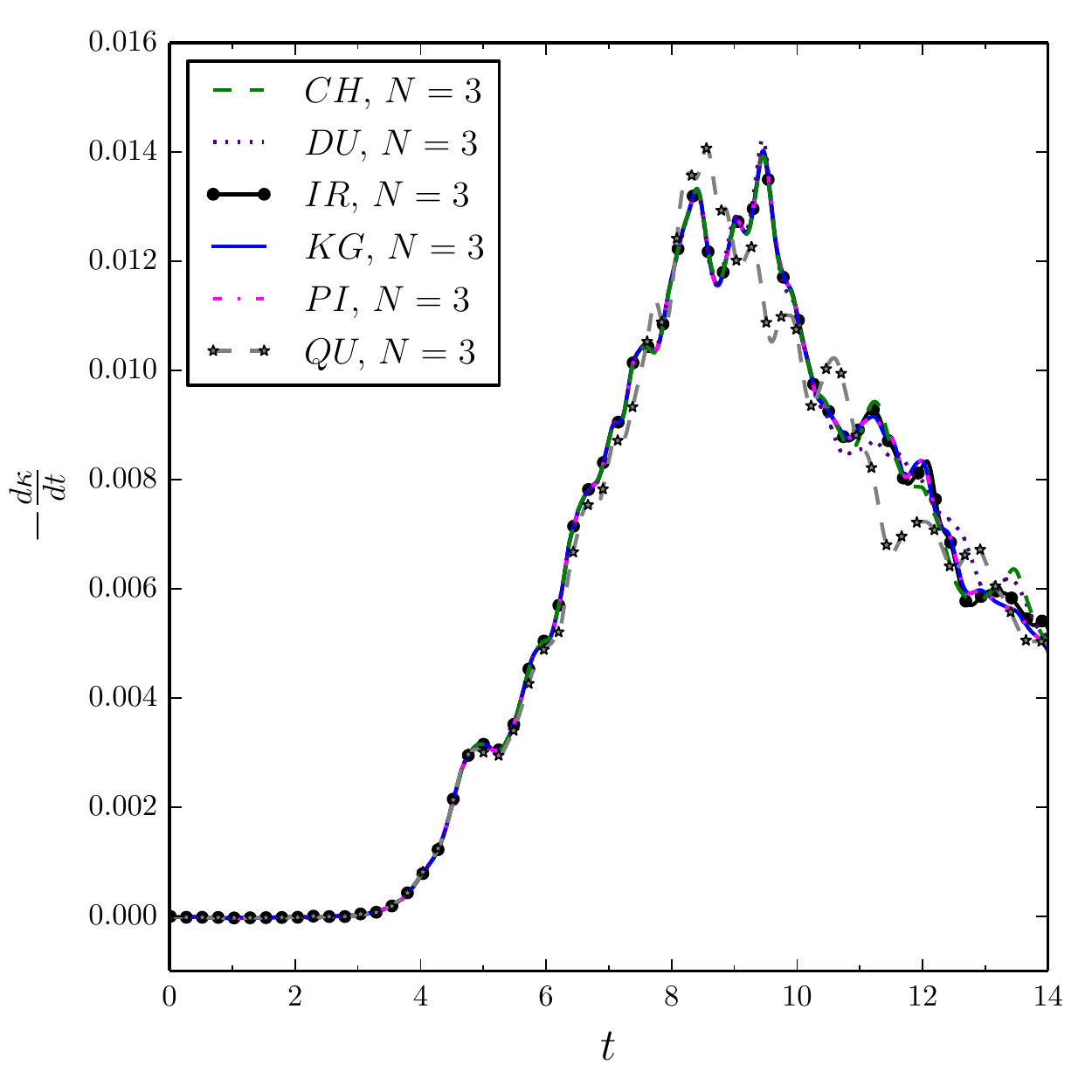}
\caption{\label{fig:finale} Plot of dissipation rates for all variants including the Roe variable based splitting variant $QU$ for $N=3$ and $16^3$ grid cells.}
\end{figure}
With this simple construction and translation guide, it is straightforward to introduce novel splittings, as demonstrated, and perhaps find forms that offer benefits over the ones currently available in the literature.

\subsection{\textblue{Conclusion}}

In the first part of this paper, we exploit the volume flux difference form introduced by Carpenter et al. and Fisher et al. \cite{carpenter_esdg,fisher2013} to construct a unified formulation for split forms of quadratic and cubic products. From a simple relationship between split forms for quadratic and cubic products, we showed that it is possible to directly translate known alternative compressible Euler formulations into the volume flux difference form \eqref{eq:RHSfluxform} by introducing a numerical volume flux $F^\#$, $G^\#$, $H^\#$. It is possible to simply change the volume flux and generate a new DGSEM. We prove that the kinetic energy preserving numerical volume fluxes generate high-order accurate discretisations that also preserve kinetic energy.

Several alternative formulations of the compressible Euler equations from the literature are investigated in the numerical results sections. All schemes show expected convergence behaviour and expected auxiliary conservation properties. An outcome of the numerical assessment is that the new DG schemes are more robust than DG with over-integration. Configurations that crash with over-integration can be successfully finished with the split form DGSEM as well as with the entropy stable DGSEM introduced by Carpenter et al. \cite{carpenter_esdg}. The numerical volume fluxes of the KG, PI and DU variants offer a computational advantage compared to the more complex entropy stable variants IR and CH. We note that the volume flux differencing form is computationally more intense than the standard DGSEM. On the other hand, we found that for the inviscid Taylor-Green vortex, all configurations for the standard DGSEM (without over-integration) immediately crash.

It is also worth noting that not all alternative formulations automatically improve the robustness, even if the split form is kinetic energy preserving, such as the MO variant. Similar to the standard DGSEM, this variant crashes for all configurations after a short simulation time. Whereas the DU variant is as stable as the KG, PI, IR and CH variants for low Mach numbers, for higher Mach numbers (more compressibility effects), DU is less robust. A conclusion so far is that a good balance between robustness and computational effort is offered by the variants KG and PI. However, it is clear that with the newly established framework for de-aliasing split forms of DG for the compressible Euler equations \cite{ducros2000,kennedy2008,FDaliasing,larsson2007} more assessment of the many interesting aspects, such as robustness, accuracy and efficiency is necessary.

Additionally, the numerical viscosity of the different discretisations is investigated and it was possible to directly trace the influence of the stabilisation terms. The key for this is that even without interface stabilisation terms, some configurations of the schemes were stable. This offers interesting possibilities in the control and design of dissipation tailored for turbulence modelling.

\appendix

\section{Proofs}
\subsection{Proof of Lem. \ref{Lem} [Discrete split forms]}\label{LemProof}

\begin{proof}
First we consider and make use of the relation $\dmat=\mmat^{-1}\qmat$ to get
\begin{equation}\label{aAverage}
\begin{aligned}
2\,\sum\limits_{m=0}^N Q_{im}\,\average{a}_{im} &= \sum_{m=0}^N Q_{im} (a_i + a_m),\\
&= a_i\sum_{m=0}^N Q_{im} + \sum_{m=0}^N Q_{im}a_m,\\
&= \sum_{m=0}^N Q_{im}a_m,\\
\end{aligned}
\end{equation}
for $ i = 0,\ldots,N$ and we have used the fact that the rows of $\mat{Q}$ sum to zero \cite{Strand199447}. We compress the result \eqref{aAverage} into the matrix vector product form
\begin{equation}
2\,\sum\limits_{m=0}^N D_{im}\,\average{a}_{im} = (\mat{D}\,\vec{a})_i.
\end{equation}

Next, we prove the identity for the product of two averages
\begin{equation}\label{substSimp}
\begin{aligned}
2\,\sum\limits_{m=0}^N Q_{im}\,\average{a}_{im}\,\average{b}_{im}  &= \half\sum_{m=0}^NQ_{im}(a_i+a_m)(b_i+b_m), \\ 
&=\frac{1}{2}\sum_{m=0}^NQ_{im}a_m\,b_m+\frac{a_i}{2}\sum_{m=0}^NQ_{im}\,b_m + \frac{b_i}{2}\sum_{m=0}^NQ_{im}\,a_m + \frac{a_i\,b_i}{2}\sum_{m=0}^NQ_{im},\\
&=\frac{1}{2}\left\{ \sum_{m=0}^NQ_{im}\,a_m\,b_m\, + a_i\sum_{m=0}^NQ_{im}\,b_m + b_i\sum_{m=0}^NQ_{im}\,a_m\right\},
\end{aligned}
\end{equation}
for $i = 0,\ldots,N$. Again, we have used the property that the sum of each row of the SBP matrix $\mat{Q}$ vanishes. We see that after scaling by $i$-th term of $\mmat^{-1}$ the flux difference result \eqref{substSimp} is the $i^{\textrm{th}}$ row of the split form
\begin{equation}
2\,\sum\limits_{m=0}^N D_{im}\,\average{a}_{im}\,\average{b}_{im}   = \frac{1}{2}\left(\dmat\,\uuline{a}\,\vec{b} +\uuline{a}\,\dmat\,\vec{b} + \uuline{b}\,\dmat\,\vec{a} \right)_i.
\end{equation}

We prove the final identity of \eqref{eq:skewsym_identities} by considering a triple product of averages 
\begin{equation}\label{tripleProdProof}
\resizebox{0.85\textwidth}{!}{$
\begin{aligned}
2\,\sum\limits_{m=0}^N Q_{im}\average{a}_{im}\average{b}_{im}\average{c}_{im} &= \frac{1}{4}\sum_{m=0}^N Q_{im}(a_i+a_m)\,(b_i+b_m)\,(c_i+c_m),\\
&= \frac{1}{4}\sum_{m=0}^N Q_{im}\,\left(a_i b_i c_i + a_i b_i c_m + a_i b_m c_i + a_i b_m c_m \right.\\
&\qquad\qquad\quad\left.+ a_m\, b_i\, c_i+a_m\, b_i\, c_m + a_m\, b_m\, c_i + a_m\, b_m\, c_m\right),\\
&= \frac{a_i\,b_i\,c_i}{4}\sum_{m=0}^N Q_{im} + \frac{a_i\,b_i}{4}\sum_{m=0}^N Q_{im}\,c_m + \frac{a_i\,c_i}{4}\sum_{m=0}^N Q_{im}\,b_m + \frac{a_i}{4}\sum_{m=0}^N Q_{im}\,b_m\,c_m\\
&\qquad+\frac{b_i\,c_i}{4}\sum_{m=0}^N Q_{im}\, a_m+\frac{b_i}{4}\sum_{m=0}^N Q_{im}\,a_m\, c_m + \frac{c_i}{4}\sum_{m=0}^N Q_{im}\,a_m\,b_m + \sum_{m=0}^N Q_{im}\,a_m\,b_m\,c_m,\\
&= \frac{1}{4}\left\{\sum_{m=0}^N Q_{im}\,a_m\,b_m\,c_m+ a_i\sum_{m=0}^N Q_{im}\,b_m\,c_m + {b_i}\sum_{m=0}^N Q_{im}\,a_m\,c_m + {c_i}\sum_{m=0}^N Q_{im}\,a_m\,b_m \right.\\
&\qquad\qquad\left.+{b_i\,c_i}\sum_{m=0}^N Q_{im}\, a_m+ {a_i\,c_i}\sum_{m=0}^N Q_{im}\,b_m+{a_i\,b_i}\sum_{m=0}^N Q_{im}\,c_m\right\},\\
\end{aligned}$}
\end{equation}
for $i = 0,\ldots,N$. The vanishing sum property of the rows of $\mat{Q}$ is, again, applied. We condense the final result of \eqref{tripleProdProof} into matrix-vector form to obtain the final split form identity
\begin{equation}
2\,\sum\limits_{m=0}^N D_{im}\,\average{a}_{im}\,\average{b}_{im}\,\average{c}_{im} = \frac{1}{4}\left(\dmat\,\uuline{a}\,\uuline{b}\,\vec{c} + \uuline{a}\,\dmat\,\uuline{b}\,\vec{c} + \uuline{b}\,\dmat\,\uuline{a}\,\vec{c} + \uuline{c}\,\dmat\,\uuline{a}\,\vec{b} + \uuline{b}\,\uuline{c}\,\dmat\,\vec{a} + \uuline{a}\,\uuline{c}\,\dmat\,\vec{b} + \uuline{a}\,\uuline{b}\,\dmat\,\vec{c}\right)_i.
\end{equation}
\end{proof}

\subsection{Proof of Thm. \ref{Thm3} [Kinetic energy preservation]}\label{Thm3Proof}
\begin{proof}
The kinetic energy is calculated as
\begin{equation}
\kappa:=\frac{1}{2}\rho\,(u^2+v^2+w^2) = \frac{(\rho\,u)^2+(\rho\,v)^2+(\rho\,w)^2}{2\,\rho}.
\end{equation}
The kinetic energy balance for the compressible Euler equations is 
\begin{equation}
\label{eq:kep_balance}
\resizebox{0.9\textwidth}{!}{$
\frac{\partial \kappa}{\partial t} + \left(\frac{1}{2}\rho\,u\,(u^2+v^2+w^2)\right)_x +u\,p_x  + \left(\frac{1}{2}\rho\,v\,(u^2+v^2+w^2)\right)_y +v\,p_y+ \left(\frac{1}{2}\rho\,w\,(u^2+v^2+w^2)\right)_z +w\,p_z=0.
$}
\end{equation}
We can also express the time derivative of the kinetic energy in terms of the time derivatives of the mass and momentum 
\begin{equation}
\label{eq:dkdt}
\begin{split}
\frac{\partial \kappa}{\partial t}:&= \frac{\partial k}{\partial\rho}\,(\rho)_t +\frac{\partial k}{\partial(\rho\,u)}\,(\rho\,u)_t +\frac{\partial k}{\partial(\rho\,v)}\,(\rho\,v)_t +\frac{\partial k}{\partial(\rho\,w)}\,(\rho\,w)_t,\\
&=-\left(\frac{u^2+v^2+w^2}{2}\right)(\rho)_t +(u)\,(\rho\,u)_t+(v)\,(\rho\,v)_t+(w)\,(\rho\,w)_t.
\end{split}
\end{equation}

We focus on the volume parts  of the continuity equation and consider a single node $ijk$ to get 
\begin{equation}
\label{eq:drdt}
\resizebox{0.9\textwidth}{!}{$
\begin{split}
-\left(\frac{u^2+v^2+w^2}{2}\right)(\rho)_t &\approx  -\left(\frac{(u_{ijk})^2+(v_{ijk})^2+(w_{ijk})^2}{2}\right) 2\,\sum\limits_{m=0}^N D_{im}\,F^{\#,1}(U_{ijk},U_{mjk}) + D_{jm}\,G^{\#,1}(U_{ijk},U_{imk}) + D_{km}\,H^{\#,1}(U_{ijk},U_{ijm}) ,
\end{split}$}
\end{equation}
and
\begin{equation}
\resizebox{0.9\textwidth}{!}{$
\begin{split}
(u)\,(\rho\,u)_t &\approx u_{ijk}\,2\,\sum\limits_{m=0}^N D_{im}\, F^{\#,1}(U_{ijk},U_{mjk})\,\average{u}_{(i,m)jk} + D_{jm}\, G^{\#,1}(U_{ijk},U_{imk})\,\average{u}_{i(j,m)k}+D_{km}\, H^{\#,1}(U_{ijk},U_{ijm})\,\average{u}_{ij(k,m)} \\
&+  u_{ijk}\, 2\,\sum\limits_{m=0}^N D_{im}\widetilde{p}(U_{ijk},U_{mjk}),\\
(v)\,(\rho\,v)_t &\approx v_{ijk}\,2\,\sum\limits_{m=0}^N D_{im}\, F^{\#,1}(U_{ijk},U_{mjk})\,\average{v}_{(i,m)jk} + D_{jm}\, G^{\#,1}(U_{ijk},U_{imk})\,\average{v}_{i(j,m)k}+D_{km}\, H^{\#,1}(U_{ijk},U_{ijm})\,\average{v}_{ij(k,m)}\\
&+  v_{ijk}\, 2\,\sum\limits_{m=0}^N D_{jm}\widetilde{p}(U_{ijk},U_{imk}),\\
(w)\,(\rho\,w)_t &\approx w_{ijk}\,2\,\sum\limits_{m=0}^N D_{im}\, F^{\#,1}(U_{ijk},U_{mjk})\,\average{w}_{(i,m)jk} + D_{jm}\, G^{\#,1}(U_{ijk},U_{imk})\,\average{w}_{i(j,m)k}+D_{km}\, H^{\#,1}(U_{ijk},U_{ijm})\,\average{w}_{ij(k,m)} \\
&+  w_{ijk}\, 2\,\sum\limits_{m=0}^N D_{km}\widetilde{p}(U_{ijk},U_{ijm}),
\end{split}$}
\end{equation}
where we used a notation of the average of two states, e.g. $\average{u}_{(i,m)jk} :=\frac{1}{2}(u_{ijk}+u_{mjk})$ and inserted the kinetic energy preserving structure of the numerical flux functions, see \eqref{eq:kep_property}.

The next step is to sum all four expressions according to \eqref{eq:dkdt}. We first note that as long as the numerical trace approximation of the pressure $\widetilde{p}$ is consistent and symmetric, it follows from the work of Fisher et al. and Carpenter et al. \cite{fisher2013,carpenter_esdg} that the resulting discretisation of $u\,p_x + v\,p_y+w\,p_z$ of the pressure work is consistent and high-order accurate.

If we look at the advective terms, we note that the values $u_{ijk}$, $v_{ijk}$, and $w_{ijk}$ do not depend on the summation index $m$ and thus can be pulled into the sum. By doing this, we inspect the product of the velocities and the averages. Looking at the terms involving $F^{\#,1}(U_{ijk},U_{mjk})$, 
\begin{equation}
\resizebox{0.9\textwidth}{!}{$
\begin{split}
u_{ijk}\average{u}_{(i,m)jk}\,F^{\#,1}(U_{ijk},U_{mjk}) &= u_{ijk}\,\frac{1}{2}(u_{ijk}+u_{mjk})\,F^{\#,1}(U_{ijk},U_{mjk}) = \left[\frac{(u_{ijk})^2}{2}+ \frac{u_{ijk}\,u_{mjk}}{2}\right]\,F^{\#,1}(U_{ijk},U_{mjk}),\\
v_{ijk}\average{v}_{(i,m)jk}\,F^{\#,1}(U_{ijk},U_{mjk})   &=v_{ijk}\,\frac{1}{2}(v_{ijk}+v_{mjk})\,F^{\#,1}(U_{ijk},U_{mjk})  =  \left[\frac{(v_{ijk})^2}{2}+ \frac{v_{ijk}\,v_{mjk}}{2}\right]\,F^{\#,1}(U_{ijk},U_{mjk}),\\
w_{ijk}\average{w}_{(i,m)jk}\,F^{\#,1}(U_{ijk},U_{mjk})   &=w_{ijk}\,\frac{1}{2}(w_{ijk}+w_{mjk})\,F^{\#,1}(U_{ijk},U_{mjk})  =  \left[\frac{(w_{ijk})^2}{2}+ \frac{w_{ijk}\,w_{mjk}}{2}\right]\,F^{\#,1}(U_{ijk},U_{mjk}).
\end{split}$}
\end{equation}
Summing these three terms gives 
\begin{equation}
\resizebox{0.9\textwidth}{!}{$
\begin{split}
\left(u_{ijk}\average{u}_{(i,m)jk} + v_{ijk}\average{v}_{(i,m)jk} + w_{ijk}\average{w}_{(i,m)jk}\right)\,F^{\#,1}(U_{ijk},U_{mjk})&=\frac{(u_{ijk})^2+(v_{ijk})^2+(w_{ijk})^2}{2}\,F^{\#,1}(U_{ijk},U_{mjk})\\
&+\frac{u_{ijk}\,u_{mjk}+v_{ijk}\,v_{mjk}+w_{ijk}\,w_{mjk}}{2}\,F^{\#,1}(U_{ijk},U_{mjk}).
\end{split}$}
\end{equation}
Note, that the first term is identical to the first term on the right hand side of \eqref{eq:drdt}, except for the sign. Thus, when adding, this term cancels and we are left with the remainder term
\begin{equation}
f^{\#,\kappa}(U_{ijk},U_{mjk}) :=\frac{1}{2}\,F^{\#,1}(U_{ijk},U_{mjk})\left(u_{ijk}\,u_{mjk}+v_{ijk}\,v_{mjk}+w_{ijk}\,w_{mjk}\right),
\end{equation}
which is symmetric in the arguments and is consistent with the advective flux of the kinetic energy balance $f=\frac{1}{2}\rho\,u\,(u^2+v^2+w^2)$.\\
\\
Analogously, if we focus on the terms involving $G^{\#,1}(U_{ijk},U_{imk})$, we get 
\begin{equation}
g^{\#,\kappa}(U_{ijk},U_{imk}) :=\frac{1}{2}\,G^{\#,1}(U_{ijk},U_{imk})\left(u_{ijk}\,u_{imk}+v_{ijk}\,v_{imk}+w_{ijk}\,w_{imk}\right),
\end{equation}
which is again symmetric in the arguments and consistent to $g=\frac{1}{2}\rho\,v\,(u^2+v^2+w^2)$. For the terms involving $H^{\#,1}(U_{ijk},U_{ijm})$ we get
\begin{equation}
h^{\#,\kappa}(U_{ijk},U_{ijm}) :=\frac{1}{2}\,H^{\#,1}(U_{ijk},U_{ijm})\left(u_{ijk}\,u_{ijm}+v_{ijk}\,v_{ijm}+w_{ijk}\,w_{ijm}\right),
\end{equation}
which is also symmetric and consistent to $h=\frac{1}{2}\rho\,w\,(u^2+v^2+w^2)$.

Thus, summarising, we get the following volume term in our discrete kinetic energy balance
\begin{equation}
\begin{split}
\left(\frac{\partial \kappa}{\partial t}\right)_{ijk}\approx &-2\,\sum\limits_{m=0}^N D_{im}\,f^{\#,\kappa}(U_{ijk},U_{mjk})+D_{jm}\,g^{\#,\kappa}(U_{ijk},U_{imk})  
+D_{km}\,h^{\#,\kappa}(U_{ijk},U_{ijm})\\
&-2\,\sum\limits_{m=0}^N u_{ijk}\,D_{im}\,\widetilde{p}(U_{ijk},U_{mjk})+v_{ijk}\,D_{jm}\,\widetilde{p}(U_{ijk},U_{imk})+w_{ijk}\,D_{km}\,\widetilde{p}(U_{ijk},U_{ijm}),
\end{split}
\end{equation}
which is a consistent and high-order accurate approximation of the continuous kinetic energy balance \eqref{eq:kep_balance}, with the advective terms in conservative form. 
\end{proof}

\section{Curvilinear flux differencing form}\label{sec:curvilinear}

Now the computational domain is divided into non-overlapping \emph{curved} hexahedral elements $C$. We create a polynomial transformation $\vec{X}(\vec\xi)$ to map computational coordinates in the reference cube $\vec{\xi} = (\xi^1,\xi^2,\xi^3)= (\xi,\eta,\zeta)$ to physical coordinates $\vec{x} = (x,y,z)\in C$, for details see \cite{koprivabook}.

The curvilinear coordinate system for $\vec\xi$ on the reference cube has three covariant basis vectors, $\vec{a}_i$, computed directly from the transformation
\begin{equation}\label{covariantVecs}
\vec{a}_i = \pderivative{\vec{X}}{\xi^i},\quad i=1,2,3.
\end{equation}
From the covariant basis vectors we derive the contravariant basis vectors $\vec{a}^i$, scaled by the Jacobian of the transformation $J $
\begin{equation}\label{contraVecs}
J \vec{a}^i = J \nabla{\xi^i} = \vec{a}_j\times\vec{a}_k,\quad (i,j,k)\textrm{ cyclic}.
\end{equation}
Alternatively, there is an explicitly divergence-free form of the contravariant basis vectors derived in \cite{Kopriva:2006er}
\begin{equation}\label{divFreeContra}
J a^i_n = -\hat{x}_i\cdot\nabla_\xi\times(x_l\nabla_\xi x_m),\quad i = 1,2,3;\; n = 1,2,3;\; (n,m,l)\;\textrm{cyclic}. 
\end{equation}
The divergence-free form \eqref{divFreeContra} of the contravariant basis vectors is particularly important to prevent spurious oscillations in the solution on curved sided hexahedral elements \cite{Kopriva:2006er}. However, for two dimensional problems and straight-sided hexahedral meshes (e.g. Cartesian meshes) the cross product formulation \eqref{contraVecs} is sufficient to prevent the generation of spurious waves by a mesh \cite{Kopriva:2006er}. 

The conservation law in the physical domain transforms to a conservation law equation in the reference domain of the form
\begin{equation}\label{newConsLaw}
J{U}_t + \widetilde{\mathcal{L}}^{div}_{\xi}(U) + \widetilde{\mathcal{L}}^{div}_{\eta}(U) + \widetilde{\mathcal{L}}^{div}_{\zeta}(U) = 0,
\end{equation}
where the Jacobian values of the transformation are computed from the covariant basis vectors \eqref{covariantVecs} and the contravariant operators incorporate the metric terms \eqref{contraVecs} 
\begin{equation}\label{newVars}
\begin{aligned}
J &= \vec{a}_1\cdot(\vec{a}_2\times \vec{a}_3),\\
\widetilde{\mathcal{L}}^{div}_{\xi}(U) &= \left(J {a}^1_1\,F(U)+J {a}^1_2\,G(U)+J {a}^1_3\,H(U)\right)_\xi,\\[0.1cm]
\widetilde{\mathcal{L}}^{div}_{\eta}(U) &= \left(J {a}^2_1\,F(U)+J {a}^2_2\,G(U)+J {a}^2_3\,H(U)\right)_\eta,\\[0.1cm]
\widetilde{\mathcal{L}}^{div}_{\zeta}(U) &= \left(J {a}^3_1\,F(U)+J {a}^3_2\,G(U)+J {a}^3_3\,H(U)\right)_\zeta.\\[0.1cm]
\end{aligned}
\end{equation}
Next, consider the component $l$ of the mapped system \eqref{newConsLaw} and an GL node $(i,j,k)$, the DGSEM approximation in strong form is
\begin{equation}\label{eq:curve_operator}
\begin{aligned}
\left(\widetilde{\mathcal{L}}^{div}_{\xi}(U)\right)_{ijk}^l &\approx \left[\widetilde{F}^{*,l}(1,\eta_j,\zeta_k;\vec{n}) - \tilde{F}^l_{Njk}\right] -  \left[\widetilde{F}^{*,l}(-1,\eta_j,\zeta_k;\vec{n}) - \widetilde{F}^l_{0jk}\right] + \sum_{m=0}^N D_{im}\widetilde{F}^l_{mjk},\\[0.1cm]
\widetilde{F}^l_{ijk} &\approx Ja^1_1\,F^l_{ijk} + Ja^1_2\,G^l_{ijk} + Ja_3^1\,H^l_{ijk},
\end{aligned}
\end{equation}
where we use collocation for the non-linear flux functions, e.g. $F^l_{ijk} := F^l(U_{ijk})$ and denote the outward pointing normal vector by $\vec{n}$. Note that the terms $\left(\widetilde{\mathcal{L}}^{div}_{\eta}(U)\right)^l_{ijk}$ and $\left(\widetilde{\mathcal{L}}^{div}_{\zeta}(U)\right)^l_{ijk}$ have an analogous structure to \eqref{eq:curve_operator}.

\subsection{Split form stabilisation for curvilinear DGSEM}

Just as in Sec. \ref{sec:DGSEM_fluxform}, we consider alternative formulations for the volume part of the discretisation \eqref{eq:curve_operator}. Again, this means starting from a standard strong form DGSEM implementation and modifying the volume integrals. We note that for curvilinear meshes the metric terms are now polynomials. Thus, the non-linearity in the contravariant terms increases. The increased non-linearity due to the curvilinear grid is an additional cause for aliasing driven non-linear instabilities.  For a detailed algorithmic description of the curvilinear flux difference formulation see \cite{wintermeyer2015}.

In his PhD thesis, Fisher \cite{fisher2012} extended the de-aliasing flux differencing technique \eqref{eq:highorder_flux} to curvilinear grids with corresponding element-wise mappings. For this extension, the contravariant operators in each direction are needed. We collect the main result below and proofs can be found in \cite{fisher2012}. In general curvilinear coordinates the high-order flux difference form has the structure
\begin{equation}\label{eq:highOrderFluxCurvilinear}
\begin{aligned}
\frac{\overline{\widetilde{F}^l}_{(i+1)jk}-\overline{\widetilde{F}^l}_{(i)jk}}{\omega_i} \approx 2\sum_{m=0}^N D_{im}&\left[F^{\#,l}(U_{ijk},U_{mjk})\average{Ja^1_1}_{(i,m)jk}\right.\\[-0.2cm]
&\;+G^{\#,l}(U_{ijk},U_{mjk})\average{Ja^1_2}_{(i,m)jk}\\
&\left.\;+H^{\#,l}(U_{ijk},U_{mjk})\average{Ja^1_3}_{(i,m)jk}\right],
\end{aligned}
\end{equation}
where 
\begin{equation}\label{eq:metricTermAvg}
\average{Ja^1_r}_{(i,m)jk} = \half\left[\left(Ja_r^1\right)_{ijk} + \left(Ja_r^1\right)_{mjk}\right],\quad r = 1,2,3
\end{equation}
are the average of the metric term components. For the approximation of the $\overline{\widetilde{G}^l}$ and $\overline{\widetilde{H}^l}$ contravariant flux differences the superscript on the metric term averages in \eqref{eq:metricTermAvg} change to $2$ and $3$ respectively. Each of the volume flux functions in \eqref{eq:highOrderFluxCurvilinear} are consistent and symmetric. Also, the resulting curvilinear approximation is high-order accurate and conservative in the Lax-Wendroff sense. We use the form of the volume discretisation \eqref{eq:highOrderFluxCurvilinear} rather than the standard discretisation \eqref{eq:curve_operator} 
\begin{equation}\label{eq:curve_operatorFinished}
\resizebox{0.95\hsize}{!}{$
\begin{aligned}
\left(\widetilde{\mathcal{L}}^{div}_{\xi}(U)\right)_{ijk}^l \approx \left[\widetilde{F}^{*,l}(1,\eta_j,\zeta_k;\vec{n}) - \tilde{F}^l_{Njk}\right] -  \left[\widetilde{F}^{*,l}(-1,\eta_j,\zeta_k;\vec{n}) - \widetilde{F}^l_{0jk}\right] +2\sum_{m=0}^N D_{im}&\left[F^{\#,l}(U_{ijk},U_{mjk})\average{Ja^1_1}_{(i,m)jk}\right.\\[-0.2cm]
&\;+G^{\#,l}(U_{ijk},U_{mjk})\average{Ja^1_2}_{(i,m)jk}\\
&\left.\;+H^{\#,l}(U_{ijk},U_{mjk})\average{Ja^1_3}_{(i,m)jk}\right], 
\\[0.1cm]
\left(\widetilde{\mathcal{L}}^{div}_{\eta}(U)\right)_{ijk}^l \approx \left[\widetilde{G}^{*,l}(\xi_i,1,\zeta_k;\vec{n}) - \tilde{G}^l_{iNk}\right] -  \left[\widetilde{G}^{*,l}(\xi_i,-1,\zeta_k;\vec{n}) - \widetilde{G}^l_{i0k}\right] +2\sum_{m=0}^N D_{jm}&\left[F^{\#,l}(U_{ijk},U_{imk})\average{Ja^2_1}_{i(j,m)k}\right.\\[-0.2cm]
&\;+G^{\#,l}(U_{ijk},U_{imk})\average{Ja^2_2}_{i(j,m)k}\\
&\left.\;+H^{\#,l}(U_{ijk},U_{imk})\average{Ja^2_3}_{i(j,m)k}\right],\\[0.1cm]
\left(\widetilde{\mathcal{L}}^{div}_{\zeta}(U)\right)_{ijk}^l \approx \left[\widetilde{H}^{*,l}(\xi_i,\eta_j,1;\vec{n}) - \tilde{H}^l_{ijN}\right] -  \left[\widetilde{H}^{*,l}(\xi_i,\eta_j,-1;\vec{n}) - \widetilde{H}^l_{ij0}\right] +2\sum_{m=0}^N D_{km}&\left[F^{\#,l}(U_{ijk},U_{ijm})\average{Ja^3_1}_{ij(k,m)}\right.\\[-0.2cm]
&\;+G^{\#,l}(U_{ijk},U_{ijm})\average{Ja^3_2}_{ij(k,m)}\\
&\left.\;+H^{\#,l}(U_{ijk},U_{ijm})\average{Ja^3_3}_{ij(k,m)}\right],
\end{aligned}$}
\end{equation}
where the choice of the numerical surface fluxes in the normal direction, $\vec{n}$, for each of the $\#$ variants of DGSEM are identical to those discussed in Sec. \ref{sec:numflux}.

\section*{References}
\bibliographystyle{plain}
\bibliography{References}

\end{document}